\documentclass[11pt]{article}

\usepackage[tbtags]{amsmath}
\usepackage{amssymb}
\usepackage{amsthm}
\usepackage[misc]{ifsym}
\usepackage{cases}
\usepackage{mathrsfs}
\usepackage{bm}
\usepackage{bbm}
\usepackage{color}
\usepackage{amsfonts}
\usepackage{stmaryrd}
\usepackage[active]{srcltx}
\usepackage{enumerate}
\usepackage{mathtools}
\usepackage{empheq}
\usepackage{wasysym}
\usepackage{verbatim}
\usepackage{graphicx}
\usepackage[
bookmarks=true,         
bookmarksnumbered=true, 
colorlinks=true, pdfstartview=FitV, linkcolor=blue, citecolor=blue,
urlcolor=blue]{hyperref}
\usepackage{microtype}

\newcommand{\esssup}{\mathop{\mathrm{esssup}}}

\numberwithin{equation}{section}
\setlength{\textwidth}{160mm} \setlength{\textheight}{218mm}
\oddsidemargin=2 mm \topskip 0.5cm \topmargin=-0.5in
\normalsize

\title{\bf Global Maximum Principle for Partially Observed Risk-Sensitive Progressive Optimal Control of FBSDE with Poisson Jumps \thanks{This work is supported by National Key R\&D Program of China (2022YFA1006104), National Natural Science Foundations of China (12471419, 12271304), and Shandong Provincial Natural Science Foundations (ZR2024ZD35, ZR2022JQ01).}}
\author{\normalsize Jingtao Lin\thanks{\it School of Mathematics, Shandong University, Jinan 250100, P.R. China, E-mail: linjingtao@mail.sdu.edu.cn},\quad Jingtao Shi\thanks{\it Corresponding author. School of Mathematics, Shandong University, Jinan 250100, P.R. China, E-mail: shijingtao@sdu.edu.cn}}


\newtheorem{proposition}{Proposition}[section]
\newtheorem{theorem}{Theorem}[section]
\newtheorem{definition}{Definition}[section]
\newtheorem{lemma}{Lemma}[section]
\newtheorem{remark}{Remark}[section]
\newtheorem{assumption}{Assumption}[]

\begin{document}

\maketitle

\noindent{\bf Abstract:}\quad This paper is concerned with one kind of partially observed progressive optimal control problems of coupled forward-backward stochastic systems driven by both Brownian motion and Poisson random measure with risk-sensitive criteria. The control domain is not necessarily convex, and the control variable can enters into all the coefficients. The observation equation also has correlated noises with the state equation. Under the Poisson jump setting, the original problem is equivalent to a complete information stochastic recursive optimal control problem of a forward-backward system with quadratic-exponential generator. In order to establish the first- and second-order variations, some new techniques are introduced to overcome difficulties caused by the quadratic-exponential feature. A new global stochastic maximum principle is deduced. As an application, a risk-sensitive optimal investment problem with factor model is studied. Moreover, the risk-sensitive stochastic filtering problem is also studied, which involves both Brownian and Poissonian correlated noises. A modified Zakai equation is obtained.

\vspace{2mm}

\noindent{\bf Keywords:}\quad Risk-sensitive progressive optimal control, Poisson jumps, partially observation, forward-backward stochastic differential equations, Q$_{exp}$ BSDEs, global maximum principle, stochastic filtering

\vspace{2mm}

\noindent{\bf Mathematics Subject Classification:}\quad 93E20, 60H10, 49K45, 49N70

\section{Introduction}

The stochastic optimal control problem is one kind of important problems in modern control theory. It is well known that the Pontryagin's maximum principle, namely, the necessary condition for optimality, is an important tool in solving stochastic optimal control problems. When the control variable enters into the diffusion term and the control domain is not convex, the classical first-order expansion is not enough to formulate the variational equation for the state process, since the It\^o's integral $\int_{t}^{t+\epsilon}\sigma(s,x_s,u_s)dW_s$ is only of order $O(\epsilon^{\frac{1}{2}})$. In 1900, Peng \cite{Peng90} proved the global maximum principle for the forward stochastic control system by using a second-order variation equation to overcome this difficulty. Moreover, first- and second-order adjoint equations were introduced as vector- and matrix-valued {\it backward stochastic differential equations} (BSDEs), respectively. General non-linear BSDE theory was established by Pardoux and Peng \cite{PP90}, where the existence and uniqueness of solutions are obtained. Independently, Duffie and Epstein \cite{DE92} introduce the notion of recursive utility, which is a special type of BSDEs. In fact, recursive utility is an extension of the standard additive utility, where the instantaneous utility not only depends on the instantaneous consumption rate but also on the future utility (El Karoui et al. \cite{EPQ97}). Peng \cite{Peng93} first established a local maximum principle for the stochastic recursive optimal control problem, where the state process satisfies a controlled {\it forward-backward stochastic differential equation} (FBSDE). See Wu \cite{Wu98}, Yong \cite{Yong10}, Wu \cite{Wu13}, Hu \cite{Hu17}, Hu et al. \cite{HJX18}, Lin and Shi \cite{LS23} for more developments about the maximum principle of FBSDEs.

Due to their wide applications in finance and economics, many scholars have done research for the optimal control problem of stochastic systems with Poisson jumps. Situ \cite{Situ91} first obtained the maximum principle for a system driven by a controlled {\it stochastic differential equation with Poisson jumps} (SDEP), where the jump term is control independent. Tang and Li \cite{TL94} established the general maximum principle of controlled SDEP and proved existence and uniqueness results for the general {\it BSDE with Poisson jumps} (BSDEP). By introducing a new spike variation technique, Song et al. \cite{STW20} obtained a new global maximum principle for controlled SDEPs, which made up some shortcomings in \cite{TL94}. As extensions, \O ksendal and Sulem \cite{OS09}, Shi and Wu \cite{SW10}, Hao and Meng \cite{HM20}, Wang et al. \cite{WSS2024}, etc., considered systems driven by the controlled {\it FBSDE with Poisson jumps} (FBSDEP).

Originating from the concept of utility, risk sensitivity is widely related to mathematical finance. When taking into account the controller's risk preference, it is natural to consider risk-sensitive optimal control problems, where usually some risk-sensitive parameters/indices are introduced in a cost functional of exponential-of-integral type. Risk-sensitive optimal control problems have close connections with {\it linear-exponential-quadratic Gaussian} (LEQG) problems (Whittle \cite{Whittle81}, Duncan \cite{Duncan13}), and robust $H_\infty$ control problems (Lim and Zhou \cite{LZ01}).

Maximum principle is a useful tool to solve risk-sensitive optimal control problems. Pioneering work for stochastic systems can be seen in Whittle \cite{Whittle90, Whittle91}. Assuming the smoothness of the value function and using the relation between the global stochastic maximum principle of \cite{Peng90} and the dynamic programming principle of Yong and Zhou \cite{YZ99}, Lim and Zhou \cite{LZ05} established a global maximum principle for the risk-sensitive stochastic optimal control problem. By introducing a non-linear transformation from \cite{EH03}, Djehiche et al. \cite{DTT15} eliminated the smoothness assumption of the value function, and obtained a global maximum principle for risk-sensitive optimal control of {\it stochastic differential equations} (SDEs) of mean-field type. These results are extended to various stochastic systems, see Shi and Wu \cite{SW11,SW12}, Ma and Liu \cite{ML16}, Moon \cite{Moon20}, Moon et al. \cite{MDB19}, Lin and Shi \cite{LS24} and references therein.

One way to characterize risk-sensitivity is to use the nonlinear expectation rather than the linear one. That is, for a random variable $\xi$ and a constant $\theta$, considering $\mathcal{E}_\theta[\xi]\coloneqq\frac{1}{\theta}\ln\mathbb{E}[e^{\theta\xi}]$. Performing the second-order Taylor expansion on the nonlinear expectation $\mathcal{E}_\theta[\xi]$ with respect to $\theta$ around $\theta=0$ gives
\begin{equation*}
	\mathcal{E}_{\theta}[\xi]=\mathbb{E}[\xi]+\frac{\theta}{2}\mbox{Var}[\xi]+o(\theta),
\end{equation*}
where Var$[\xi]$ is the variance of $\xi$. Since Var$[\xi]$, the volatility, can be regarded as risk, to minimize $\mathcal{E}_{\theta}[\xi]$, a decision maker processes the risk-averse (resp. risk-seeking) if $\theta>0$ (resp. $\theta<0$).

As a special case of exponential utility, the risk-sensitive stochastic optimal control problem is naturally related to a {\it quadratic BSDE} (QBSDE), that is, a BSDE with quadratic generator. See El Karoui and Hamadene \cite{EH03}, Hu et al. \cite{HIM05}, Morlais \cite{Morlais09}, Hu and Tang \cite{HT16}, Ji and Xu \cite{JX24} for more details. Barrieu and El Karoui \cite{BE13} proved the existence of a solution to QBSDE, without the uniqueness, by introducing a so-called quadratic structure condition. Their result is extended to the exponential utility optimization in a market with counter party default risks by generalizing quadratic structure condition to a {\it quadratic-exponential} (Q$_{exp}$) structure condition in Ngoupeyou \cite{Ngoupeyou10}. El Karoui et al. \cite{EMN16} studied a class of {\it QBSDEs under Q$_{exp}$ structure condition} (Q$_{exp}$BSDEs) with Poisson jumps, which were exactly the ones appearing in utility maximization or indifference pricing problems in a jump setting. More details can be seen in \cite{Morlais10,KPZ15,AM16,FT18,MEK22}.

In reality, state processes usually cannot be observed directly. In many practical situations, it often happens that the state can be only partially observed via other variables, and there could be noise existing in the observation system. Based on pioneering work by Bensoussan \cite{Ben92}, Li and Tang \cite{LT95} and Tang \cite{Tang98} about maximum principles of partially observed controlled stochastic systems, there has been considerable literature about maximum principles of controlled forward-backward stochastic systems. Wang and Wu \cite{WW09-1} obtained a maximum principle for partially observed stochastic recursive optimal control problem. A partially observed optimal control problem with risk-sensitive objective is considered in Wang and Wu \cite{WW09-2}. Wang et al. \cite{WWX13} studied a partially observed stochastic control problem of FBSDEs with correlated noises between the system and the observation. Xiao \cite{Xiao11} studied a maximum principle for partially observed optimal control of forward-backward stochastic systems with Poisson jumps. Zheng and Shi \cite{ZS23} obtained a global maximum principle for partially observed forward-backward stochastic systems with Poisson jumps in progressive structure. Other work can be seen in Wu \cite{Wu10}, Shi and Wu \cite{SW10-1}, Jiang \cite{Jiang23} and a nice monograph by Wang et al. \cite{WWX18}.

Motivated by the above work, in this paper, we consider a partially observed risk-sensitive progressive optimal control problem of coupled forward-backward stochastic system with Poisson jumps. As mentioned above, the risk-sensitive criteria are connected to a {\it Q$_{exp}$BSDE with Poisson jumps} (Q$_{exp}$BSDEP). Hence, the original problem can be transformed to a complete information stochastic recursive optimal control problem with respect to the system driven by a coupled FBSDEP and a Q$_{exp}$BSDEP. Under diffusion setting, Hu et al. \cite{HJX22} studied the global maximum principle for non-coupled FBSDEs with quadratic generator and Buckdahn et al. \cite{BLLW24} extended it to the mean-field type.

Another important approach to solve the partially observed stochastic optimal control problem is to treat it as a fully observed one driven by the so-called Zakai equation. It is a linear {\it stochastic partial differential equation} (SPDE) satisfied by the unnormalized conditional density of the state (for more details, see Bain and Crisan \cite{BC09}). In the jump-diffusion framework, Germ and Gy{\"o}ngy \cite{GG22} recently derived the filtering equations for jump-diffusion systems with correlated noises. More related work can be found in Ceci and Colaneri \cite{CC12} and the references therein. To solve risk-sensitive stochastic optimal control problems, the modified Zakai equation, introduced by Nagai \cite{Nagai01}, is a useful tool when systems are partially observable. In this paper, we derive the corresponding Zakai equations, extending the work of \cite{Nagai01} and \cite{GG22}.

Our work distinguishes itself from the existing literatures in the following aspects:

(1) Compared with \cite{HJX22} and \cite{BLLW24}, a quadratic BSDEP with special exponential structure is involved in our system since Poisson jumps are considered. Under diffusion framework, to explore the global stochastic maximum principle, the quadratic growth leads to the fact that the first- and second-order variational equations are linear BSDEs with unbounded stochastic Lipschitz coefficients involving BMO-martingales, which has been studied by Briand and Confortola \cite{BC08} (see \cite{HJX22} and \cite{BLLW24} for some extensions). When Poisson jumps are involved, linear BSDEPs with stochastic Lipschitz coefficients are studied in Fujii and Takahashi \cite{FT18}. However, the jump coefficient in \cite{FT18} is just bounded Lipschitz.

(2) Compared with \cite{ZS23}, the state equation in this paper is coupled and a Q$_{exp}$BSDEP is needed, which leads to some new techniques to obtain estimates to deduce the variational inequality. More specifically, Lemma 3.2 of \cite{ZS23} shows that $\mathbb{E}\big(\int_0^T|z^{i,\epsilon}_t-\bar{z}^i_t-z^{i,1}_t|^2dt\big)^2=o(\epsilon^2)$ rather than $O(\epsilon^4)$. When it comes to the estimate of $z^{i,\epsilon}_t-\bar{z}^i_t-z^{i,1}_t-z^{i,2}_t$, we need to estimate $\mathbb{E}\big(\int_0^T|\bar{z}^i_t||z^{i,\epsilon}_t-\bar{z}^i_t-z^{i,1}_t|\mathbbm{1}_{[\bar{t},\bar{t}+\epsilon]}(t)dt\big)^p$, which is caused by the quadratic-exponential structure. However, $\mathbb{E}\big(\int_0^T|z^{i,\epsilon}_t-\bar{z}^i_t-z^{i,1}_t|^2dt\big)^2=o(\epsilon^2)$ is not enough to get our desired order (see Lemma \ref{lemma estimate of zeta1 hat zeta2} and Lemma \ref{lemma estimate of zeta2,hat zeta 3} for more details). Extending the method in \cite{BLLW24} to the Poisson jump setting, we introduce a deterministic set $\Gamma_M$, $M\geq1$, and define $E_\epsilon\coloneqq[\bar{t},\bar{t}+\epsilon]\cap\Gamma_M$. We first deduce that, noting that we use the notion $\kappa$ rather than $z$, $\mathbb{E}\big(\int_0^T|\kappa^{i,\epsilon}_t-\bar{\kappa}^i_t-\kappa^{i,1}_t|^2dt\big)^2=O(\epsilon^4)$ in Lemma \ref{lemma estimate of zeta1 hat zeta2} and $\mathbb{E}\big(\int_0^T|\kappa^{i,\epsilon}_t-\bar{\kappa}^i_t-\kappa^{i,1}_t-\kappa^{i,2}_t|^2dt\big)^2=o(\epsilon^4)$ in Lemma \ref{lemma estimate of zeta2,hat zeta 3}. These lead to a global maximum principle which first holds true for $t\in E_\epsilon$. Then, the general result follows from $M\to\infty$.

(3) We consider a risk-sensitive optimal investment model in which the goal is to maximize the exponential utility of wealth. In this model, the mean return of the stock is explicitly affected by the underlying economic factor. The result extends those in Fleming and Sheu \cite{FS00}, Nagai \cite{Nagai01}, Davis and Lleo \cite{DL13}.

(4) The modified Zakai equation, which is satisfied by the unnormalized conditional density of the jump-diffusion system is presented. Both Brownian and Poissonian correlated noises are involved in our setting, which is generalization of Nagai \cite{Nagai01}, Germ and Gy{\"o}ngy \cite{GG22}.

The rest of this paper is organized as follows. In Section 2, we formulate a partially observed risk-sensitive progressive optimal control problem of forward-backward stochastic system with Poisson jumps, which is transformed to a stochastic recursive optimal control problem of controlled FBSDEP and quadratic-exponential BSDEP. $L^p$-estimate of fully coupled FBSDEPs and well-posedness of quadratic-exponential BSDEPs are studied. In Section 3, we derive the global maximum principle of the equivalent stochastic recursive optimal control problem. As an application, in Section 4, we consider a risk-sensitive optimal investment model in which the goal is to maximize the exponential utility of wealth. The risk-sensitive filtering Zakai equation under the jump-diffusion framework is presented in Section 5. Section 6 gives some concluding remarks.

\section{Problem formulation and preliminaries}\label{section Problem formulation and preliminaries}

Let $T>0$ be fixed. Consider a complete filtered probability space $(\Omega,\mathcal{F},(\mathcal{F}_t)_{0\leqslant t \leqslant T},\bar{\mathbb{P}})$ and two one-dimensional independent standard Brownian motions $W^1$ and $\tilde{W}^2$ defined in $\mathbb{R}^2$ with $W^1_0=\tilde{W}^2_0=0$. Let $(E,\mathcal{B}(E))$ be a Polish space with the $\sigma$-finite measure $\nu_1$ on $E_1$, $\nu_2$ on $E_2$ and $E_1\subset E,E_2\subset E$. Suppose that $N_1(de,dt)$ is a Poisson random measure on $(\mathbb{R}^{+}\times E_1,\mathcal{B}(\mathbb{R}^{+})\times\mathcal{B}(E_1))$ under $\bar{\mathbb{P}}$ and for any $A_1\in\mathcal{B}(E_1)$, $\nu_1(A_1)<\infty$, the compensated Poisson random measure is given by $\tilde{N}_1(de,dt)=N_1(de,dt)-\nu_1(de)dt$. Let $N_2(de,dt)$ be an integer-valued random measure and its predictable compensator is given by $\lambda(t,x_{t-},e)\nu_2(de)dt$, where the function $\lambda(t,x,e)\in[l,1),0< l<1$, and for any $A_2\in\mathcal{B}(E_2)$, $\nu_2(A_2)<\infty$. The compensated random measure is given by $\tilde{N}^{\prime}_2(de,dt)=N_2(de,dt)-\lambda(t,x_{t-},e)\nu_2(de)dt$. Moreover, $W^1,\tilde{W}^2,N_1,N_2$ are mutually independent under $\bar{\mathbb{P}}$, and let $\mathcal{F}_t^{W^1},\mathcal{F}_t^{\tilde{W}^2},\mathcal{F}_t^{N_1},\mathcal{F}_t^{N_2}$ be the $\bar{\mathbb{P}}$-completed natural filtrations generated by $W^1,\tilde{W}^2,N_1,N_2$, respectively. Set $\mathcal{F}_t\coloneqq\mathcal{F}_t^{W^1}\vee\mathcal{F}_t^{\tilde{W}^2}\vee\mathcal{F}_t^{N_1}\vee\mathcal{F}_t^{N_2}\vee\mathcal{N}$ and $\mathbb{F}:=\{\mathcal{F}_t\}_{0\leqslant t\leqslant T}$, where $\mathcal{N}$ denotes all $\bar{\mathbb{P}}$-null sets. $\bar{\mathbb{E}}$ denotes the expectation under $\bar{\mathbb{P}}$.

Different from \cite{TL94,TH02} but similar as \cite{STW20}, the integrand of the stochastic integral in our paper is $E$-progressive measurable instead of $E$-predictable. We first introduce some preliminaries.

\begin{definition}
	Suppose that $\mathcal{H}$ is an Euclidean space, and $\mathcal{B}(\mathcal{H})$ is the Borel $\sigma$-field on $\mathcal{H}$. Given $T>0$, a process $x:[0,T]\times\Omega\rightarrow\mathcal{H}$ is called \emph{progressive measurable (predictable)} if $x$ is $\mathcal{G}/\mathcal{B}(\mathcal{H})(\mathcal{P}/\mathcal{B}(\mathcal{H}))$ measurable, where $\mathcal{G}(\mathcal{P})$ is the corresponding progressive measurable (predictable) $\sigma$-field on $[0,T]\times\Omega$, and a process $x:[0,T]\times\Omega\times E\rightarrow\mathcal{H}$ is called \emph{$E$-progressive measurable ($E$-predictable)} if $x$ is $\mathcal{G}\times\mathcal{B}(E)/\mathcal{B}(\mathcal{H})(\mathcal{P}\times\mathcal{B}(E)/\mathcal{B}(\mathcal{H}))$ measurable.
\end{definition}

Now, given a process $x$ which has RCLL paths, $x_{0-}\coloneqq0$ and $\Delta x_t\coloneqq x_t-x_{t-},t\geqslant0$, for $i=1,2$, let $m_i$ denote the measure on $\mathcal{F}\otimes\mathcal{B}([0,T])\otimes\mathcal{B}(E_i)$ generated by $N_i$ that $m_i(A)=\bar{\mathbb{E}}\int_0^T\int_{E_i}\mathbbm{1}_{A}N_i(de,dt)$. For any $\mathcal{F}\otimes\mathcal{B}([0,T])\otimes\mathcal{B}(E_i)/\mathcal{B}(\mathbb{R})$ measurable integrable process $x$, we set $\mathbb{E}_i[x]\coloneqq\int xdm_i$ and denote by $\mathbf{E}_i[x|\mathcal{P}\otimes\mathcal{B}(E_i)]$ the Radon-Nikodym derivatives w.r.t. $\mathcal{P}\otimes\mathcal{B}(E_i)$. In fact, $\mathbf{E}_i$ is not an expectation (for $m_i$ is not a probability measure), but it owns similar properties to expectation. A more general definition of stochastic integral of random measure has been introduced by \cite{STW20} where the theory of dual predictable projection is utilized. Therefore, we omit the details here and give the following lemma directly.

\begin{lemma}
	If $g$ is a positive $E_i$-progressive measurable process that $\bar{\mathbb{E}}\int_0^T\int_{E_i}gN_i(de,dt)<\infty,i=1,2$, then we have the following results:
	\begin{equation*}\begin{aligned}
			&(i)\qquad \Big(\int_0^{\cdot}\int_{E_i}gN_i(de,dt)\Big)_t^p=\int_0^t\int_{E_i}\mathbf{E}_i\big[g|\mathcal{P}\otimes\mathcal{B}(E_i)\big]\nu_i(de)dt,\\
			&(ii)\qquad \int_0^T\int_{E_i}g\tilde{N}_i(de,dt)=\int_0^T\int_{E_i}gN_i(de,dt)-\Big(\int_0^{\cdot}\int_{E_i}gN_i(de,dt)\Big)_T^p,\\
			&(iii)\qquad \int_0^T\int_{E_i}g\tilde{N}_i(de,dt)=\int_0^T\int_{E_i}gN_i(de,dt)-\int_0^T\int_{E_i}\mathbf{E}_i\big[g|\mathcal{P}\otimes\mathcal{B}(E_i)\big]\nu_i(de)dt,\\
			&(iv)\qquad \bar{\mathbb{E}}\int_0^T\int_{E_i}gN_i(de,dt)=\bar{\mathbb{E}}\int_0^T\int_{E_i}\mathbf{E}_i\big[g|\mathcal{P}\otimes\mathcal{B}(E_i)\big]\nu_i(de)dt,\\
			&(v)\qquad \Delta(g_{\cdot}\tilde{N}_i)_t=\int_{E_i}gN_i(de,\{t\}),\\
			&(vi)\qquad \big[g_{\cdot}\tilde{N}_{i,t},g_{\cdot}\tilde{N}_{i,t}\big]=\int_0^t\int_{E_i}g^2N_i(de,dt),
	\end{aligned}\end{equation*}
	where $x^p$ is the dual predictable projection of $x$.
\end{lemma}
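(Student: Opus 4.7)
The plan is to treat (i) as the heart of the lemma and to obtain (ii)--(vi) as fairly direct consequences of (i) combined with standard properties of dual predictable projections and purely discontinuous semimartingales. Throughout I would exploit positivity and the integrability hypothesis $\bar{\mathbb{E}}\int_0^T\int_{E_i} g\,N_i(de,dt)<\infty$ so that every quantity in sight is finite and the dual predictable projection of the raw increasing process is well defined.

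For (i), the strategy is to verify the characterizing property of the dual predictable projection: a predictable, nondecreasing, right-continuous process $B$ equals $A^p$ for $A_t:=\int_0^t\int_{E_i} g\,N_i(de,ds)$ if and only if $\bar{\mathbb{E}}\int_0^T H_t\,dA_t = \bar{\mathbb{E}}\int_0^T H_t\,dB_t$ for every nonnegative predictable $H$. Taking $B_t:=\int_0^t\int_{E_i} \mathbf{E}_i[g\,|\,\mathcal{P}\otimes\mathcal{B}(E_i)](s,e)\,\nu_i(de)\,ds$, this identity reduces by Fubini to
\begin{equation*}
\bar{\mathbb{E}}\int_0^T\!\int_{E_i} H_t(e) g(t,e)\,N_i(de,dt)=\bar{\mathbb{E}}\int_0^T\!\int_{E_i} H_t(e)\,\mathbf{E}_i[g\,|\,\mathcal{P}\otimes\mathcal{B}(E_i)](t,e)\,\nu_i(de)\,dt,
\end{equation*}
for every nonnegative $\mathcal{P}\otimes\mathcal{B}(E_i)$-measurable $H$. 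This is precisely the defining property of $\mathbf{E}_i[\,\cdot\,|\,\mathcal{P}\otimes\mathcal{B}(E_i)]$ as the Radon--Nikodym derivative of $m_i$ restricted to $\mathcal{P}\otimes\mathcal{B}(E_i)$ against the product of $\nu_i(de)\,dt$ and $\bar{\mathbb{P}}$; a standard monotone-class argument (indicators $\to$ simple $\to$ nonnegative measurable) upgrades the equality from elementary $H$ to the general predictable $H$, proving (i).

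Then (ii) is merely the defining decomposition of the stochastic integral against $\tilde{N}_i$ as the integral against $N_i$ minus its dual predictable projection, which is legitimate here since both parts are finite; (iii) follows by substituting (i) into (ii); and (iv) is obtained by taking $\bar{\mathbb{E}}$ in (i) and recognizing the left-hand expectation as $m_i(g)$. For (v), the process produced by (i) is of the form $\int_0^t f(s)\,ds$ and is therefore absolutely continuous, hence contributes no jumps, so the jump of $\int_0^\cdot\int_{E_i} g\,\tilde{N}_i$ at time $t$ is inherited entirely from the raw counting integral against $N_i$ and equals $\int_{E_i} g(t,e)\,N_i(de,\{t\})$. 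Finally, (vi) is the standard representation of the optional quadratic variation of a purely discontinuous semimartingale as the sum of squared jumps, which by (v) equals $\sum_{s\leqslant t}\big(\int_{E_i} g(s,e)\,N_i(de,\{s\})\big)^2$; since $N_i$ charges at most one atom per jump time, this collapses to $\int_0^t\int_{E_i} g^2\,N_i(de,ds)$.

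The main obstacle is the rigorous treatment of (i) in the progressive (rather than predictable) setting, and in particular for $i=2$, where the compensator $\lambda(t,x_{t-},e)\,\nu_2(de)\,dt$ is state-dependent and the factor $\lambda$ has to be silently absorbed into $\mathbf{E}_2[\,\cdot\,|\,\mathcal{P}\otimes\mathcal{B}(E_2)]$. One must check that the conditioning actually delivers a predictable integrand whose $\nu_i(de)\,dt$-integral reproduces the predictable content of $gN_i$, and that the difference in (ii) is a well-defined purely discontinuous martingale. This is exactly the point where the full dual-projection machinery of \cite{STW20} is needed, which is why the authors refer to that paper instead of reconstructing the theory in situ.
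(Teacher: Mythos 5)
The paper does not actually prove this lemma: immediately before stating it, the authors write that the dual-projection machinery is taken from \cite{STW20} and that they ``omit the details here and give the following lemma directly.'' So there is no in-paper argument to compare yours against; the benchmark is the construction in \cite{STW20}, and your outline reproduces it faithfully. Proving (i) through the characterizing duality $\bar{\mathbb{E}}\int_0^T H_t\,dA_t=\bar{\mathbb{E}}\int_0^T H_t\,dB_t$ for nonnegative predictable $H$, combined with the tower-type property of the generalized conditional expectation $\mathbf{E}_i[\,\cdot\,|\mathcal{P}\otimes\mathcal{B}(E_i)]$ under $m_i$ and a monotone-class argument, is exactly the right mechanism (note that $B$ is continuous, hence predictable, as required). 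Treating (ii) as the \emph{definition} of $\int g\,\tilde N_i$ for merely progressive integrands is also correct, (iii)--(iv) follow by substitution and by $\bar{\mathbb{E}}A_T=\bar{\mathbb{E}}A^p_T$, (v) uses the continuity of the compensating term, and (vi) uses that an integer-valued random measure charges at most one point of $E_i$ at each fixed time, so the sum of squared jumps collapses to $\int_0^t\int_{E_i}g^2N_i(de,ds)$. These last two items are purely pathwise and unproblematic.

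The one place where your argument needs more care --- and where the lemma's own statement is ambiguous --- is the case $i=2$. Your description of $\mathbf{E}_i[g|\mathcal{P}\otimes\mathcal{B}(E_i)]$ as the Radon--Nikodym derivative of $g\,dm_i$ against $\bar{\mathbb{P}}\otimes\nu_i\otimes dt$ coincides with the paper's definition (a derivative with respect to $m_i$ restricted to $\mathcal{P}\otimes\mathcal{B}(E_i)$) only when the restriction of $m_i$ to the predictable $\sigma$-field is itself $\bar{\mathbb{P}}\otimes\nu_i\otimes dt$, i.e.\ when the compensator of $N_i$ is $\nu_i(de)dt$. This holds for $N_1$, but under $\bar{\mathbb{P}}$ the compensator of $N_2$ is $\lambda(t,x_{t-},e)\nu_2(de)dt$, so the same chain of equalities yields
\begin{equation*}
\Big(\int_0^{\cdot}\int_{E_2}gN_2(de,dt)\Big)^p_t=\int_0^t\int_{E_2}\mathbf{E}_2\big[g|\mathcal{P}\otimes\mathcal{B}(E_2)\big]\,\lambda(s,x_{s-},e)\,\nu_2(de)\,ds,
\end{equation*}
and saying that $\lambda$ ``has to be silently absorbed into $\mathbf{E}_2$'' is not compatible with the paper's definition of $\mathbf{E}_2$ via $m_2$. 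Identities (i)--(iv) as written hold verbatim for $i=2$ only under the transformed measure $\mathbb{P}$, where $N_2$ has compensator $\nu_2(de)dt$ and $\tilde N_2=N_2-\nu_2\,dt$, with $m_2$ and the expectation in (iv) read under $\mathbb{P}$ rather than $\bar{\mathbb{P}}$. You should either state this restriction explicitly or carry the factor $\lambda$ through (i)--(iv); once that is fixed, your proof is complete and correct.
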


Consider the following state equation which is a controlled coupled FBSDEP:
\begin{equation}\label{state equation x y}
	\left\{
	\begin{aligned}
		dx_t&=b_1\Big(t,x_t,y_t,z^1_t,z^2_t,\int_{E_1}\tilde{z}^1_{(t,e)}\nu_1(de),\int_{E_2}\tilde{z}^2_{(t,e)}\nu_2(de),u_t\Big)dt\\
		&\quad+\sigma_1(t,x_t,y_t,u_t)dW^1_t+\sigma_2(t,x_t,y_t,u_t)d\tilde{W}^2_t\\
		&\quad+\int_{E_1}f_1(t,x_{t-},y_{t-},u_t,e)\tilde{N}_1(de,dt)+\int_{E_2}f_2(t,x_{t-},y_{t-},u_t,e)\tilde{N}'_2(de,dt),\\
		-dy_t&=g\Big(t,x_t,y_t,z^1_t,z^2_t,\int_{E_1}\tilde{z}^1_{(t,e)}\nu_1(de),\int_{E_2}\tilde{z}^2_{(t,e)}\nu_2(de),u_t\Big)dt\\
		&\quad-z^1_tdW^1_t-z^2_tdW^2_t-\int_{E_1}\tilde{z}^1_{(t,e)}\tilde{N}_1(de,dt)-\int_{E_2}\tilde{z}^2_{(t,e)}\tilde{N}_2(de,dt),\quad t\in[0,T],\\
		x_0&=x_0,\quad y_T=\phi(x_T).
	\end{aligned}
	\right.
\end{equation}
Moreover, we have explicit representations of $W^2,\tilde{N}_2$ by $\tilde{W}^2,\tilde{N}^\prime_2$ as follows:
\begin{equation}\label{relation W N}
	\begin{aligned}
		dW^2_t&=d\tilde{W}^2_t+\sigma_3^{-1}(t)b_2(t,\Theta(t),u_t)dt,\\
		\tilde{N}_2(de,dt)&=\tilde{N}^\prime_2(de,dt)+(\lambda(t,x_{t-},e)-1)\nu_2(de)dt.
	\end{aligned}
\end{equation}
where $\sigma_3(t)$ and $b_2(t,\Theta(t),u_t)$ are defined in the following.

Suppose that the state process $(x,y,z^1,z^2,\tilde{z}^1,\tilde{z}^2)$ can only be observed through a related process $Y$ which is governed by the following SDEP:
\begin{equation}\label{observation}
	\left\{
	\begin{aligned}
		dY_t&=b_2\Big(t,x_t,y_t,z^1_t,z^2_t,\int_{E_1}\tilde{z}^1_{(t,e)}\nu_1(de),\int_{E_2}\tilde{z}^2_{(t,e)}\nu_2(de),u_t\Big)dt\\
		&\quad +\sigma_3(t)d\tilde{W}^2_t+\int_{E_2}f_3(t,e)\tilde{N}^{\prime}_2(de,dt),\quad t\in[0,T],\\
		Y_0&=0.
	\end{aligned}
	\right.
\end{equation}
In this paper, we consider the risk-averse cost functional. That is, for $\theta>0$,
\begin{equation}\label{cf1} J'(u)=\bar{\mathbb{E}}\bigg[\exp\biggl\{\theta\int_0^T\tilde{l}\Big(t,x_t,y_t,z^1_t,z^2_t,\int_{E_1}\tilde{z}^1_{(t,e)}\nu_1(de),\int_{E_2}\tilde{z}^2_{(t,e)}\nu_2(de),u_t\Big)dt+\theta\varphi(x_T,y_0)\biggr\}\bigg],
\end{equation}
where we suppose that $(\sigma_1,\sigma_2)(t,x,y,u):[0,T]\times \mathbb{R}\times \mathbb{R}\times \mathbb{R}\rightarrow \mathbb{R}$, $(f_1,f_2)(t,x,y,u,e):[0,T]\times \mathbb{R}\times \mathbb{R}\times \mathbb{R}\times E_1(E_2)\rightarrow \mathbb{R}$, and $(b_1,g,b_2,\tilde{l})(t,x,y,z^1,z^2,\tilde{z}^1,\tilde{z}^2,u):[0,T]\times \mathbb{R}\times \mathbb{R}\times \mathbb{R}\times \mathbb{R}\times \mathbb{R}\times \mathbb{R}\times \mathbb{R}\rightarrow \mathbb{R}$, $\phi(x): \mathbb{R}\rightarrow \mathbb{R}$, $\sigma_3(t):[0,T]\rightarrow \mathbb{R}$, $f_3(t,e):[0,T]\times E_2\rightarrow \mathbb{R}$, $\varphi(x,y):\mathbb{R}\times\mathbb{R}\rightarrow \mathbb{R}$ are suitable maps.

\begin{remark}
	\cite{ZS23} presented the $L^p(p>2)$ theory of fully coupled FBSDEPs. When it comes to the stochastic optimal control problem, the controlled forward-backward system they consider was not coupled. In our work, we study a coupled controlled forward-backward system. However, it is worth mentioning that diffusion terms $\sigma_i(\cdot)$ and jump terms $f_i(\cdot)$ are still independent of $z^i$ and $\tilde{z}^i$. In fact, the general case remains a challenging problem.
\end{remark}

The following assumptions are exerted.
\begin{assumption}\label{assumption A1 in Zheng-Shi}
	\begin{enumerate}[\bfseries (1)]
		\item $\sigma_1,\sigma_2$ are $\mathcal{G}\otimes\mathcal{B}(\mathbb{R})\otimes\mathcal{B}(\mathbb{R})\otimes\mathcal{B}(\mathbb{R})/\mathcal{B}(\mathbb{R})$ measurable, $f_1,f_2$
              are $\mathcal{G}\otimes\mathcal{B}(\mathbb{R})\otimes\mathcal{B}(\mathbb{R})\otimes\mathcal{B}(\mathbb{R})\otimes\mathcal{B}(E_1)(\mathcal{B}(E_2))/\mathcal{B}(\mathbb{R})$ measurable, $\phi$ is $\mathcal{B}(\mathbb{R})/\mathcal{B}(\mathbb{R})$ measurable.
		\item $\sigma_1,\sigma_2,f_1$ and $f_2$ are twice continuously differentiable in $x$ with bounded first and second order derivatives, $\sigma_2$ is also bounded and there is a constant $C$ such that
		      \begin{equation*}
				\big|(\sigma_1,\sigma_2,f_1,f_2)(t,x,u)\big|\leqslant C(1+|x|+|y|+|u|).
		      \end{equation*}
	    \item $\phi$ are twice continuous differentiable in $x$ with bounded second order derivatives and there is a constant $C$ such that
	          \begin{equation*}
		        |\phi(x)|\leqslant C(1+|x|).
	          \end{equation*}
	    \item For $\beta\geqslant2$, for $i=1,2$, the following hold
	          \begin{equation*}
		      \begin{aligned}
			    &\Big(\int_0^T|b_i(t,0,0,0,0,0,0,0)|dt\Big)^\beta<\infty,\quad \Big(\int_0^T|\sigma_i(t,0,0,0)|^2dt\Big)^\frac{\beta}{2}<\infty,\\
			    &\Big(\int_0^T\int_{E_i}|f_i(t,0,0,0,e)|^2\nu_i(de)dt\Big)^\frac{\beta}{2}<\infty,\quad\Big(\int_0^T|g(t,0,0,0,0,0,0,0)|dt\Big)^\beta<\infty.
		      \end{aligned}
	          \end{equation*}
	    \item $b_i,i=1,2$, $g$ are $\mathcal{G}\otimes\mathcal{B}(\mathbb{R})\otimes\mathcal{B}(\mathbb{R})\otimes\mathcal{B}(\mathbb{R})\otimes\mathcal{B}(\mathbb{R})\otimes\mathcal{B}
	          (\mathbb{R})\otimes\mathcal{B}(\mathbb{R})\otimes\mathcal{B}(\mathbb{R})/
	          \mathcal{B}(\mathbb{R})$ measurable, and are twice continuously differentiable with respect to $(x,y,z^1,z^2,\tilde{z}^1,\tilde{z}^2)$; $b_i,g,Db_i,Dg$, $D^2b_i,D^2g$ are continuous
               in $(x,y,z^1,z^2,\tilde{z}^1,\tilde{z}^2,u)$; $b_i,Db_i,Dg,D^2b_i,D^2g$ are bounded, and $|\sigma_3^{-1}(t)|$ is bounded by some constant $C$, and
	          \begin{equation*}
		      \begin{aligned}
			    &\sum_{i=1}^2\big|b_i(t,x,,y,z^1,z^2,\tilde{z}^1,\tilde{z}^2,u)\big|+\big|g(t,x,,y,z^1,z^2,\tilde{z}^1,\tilde{z}^2,u)\big|\\
			    &\ \leqslant C(1+|x|+|y|+|z^1|+|z^2|+\Vert\tilde{z}^1\Vert+\Vert\tilde{z}^2\Vert+|u|).
		      \end{aligned}
	          \end{equation*}
	    \item For any $(t,x,e)\in[0,T]\times \mathbb{R}\times E_2$, there exists a constant $C$ such that $|\lambda_x(t,x,e)|+|\lambda_{xx}(t,x,e)|\leqslant C$.	
	\end{enumerate}
\end{assumption}

\begin{assumption}\label{assumption A2 in Zheng-Shi}
	$\tilde{l},\varphi$ are twice continuous differentiable in $(x,y,z^1,z^2,\tilde{z}^1,\tilde{z}^2)$ with bounded second order derivatives and there is a constant $C$ such that
		\begin{equation*}
			\begin{aligned}
				&\big|\tilde{l}_{(x,y,z^1,z^2,\tilde{z}^1,\tilde{z}^2)}(t,x,y,z^1,z^2,\tilde{z}^1,\tilde{z}^2,u)\big|\leqslant C\big(1+|x|+|y|+|z^1|+|z^2|+\Vert\tilde{z}^1\Vert+\Vert\tilde{z}^2\Vert+|u|\big),\\
				&\big|\tilde{l}(t,x,y,z^1,z^2,\tilde{z}^1,\tilde{z}^2,u)\big|\leqslant C\big(1+|x|^2+|y|^2+|z^1|^2+|z^2|^2+\Vert\tilde{z}^1\Vert^2+\Vert\tilde{z}^2\Vert^2+|u|^2\big),\\
				&|\varphi(x,y)|\leqslant C(1+|x|^2+|y|^2),\quad |D\varphi(x,y)|\leqslant C(1+|x|+|y|).
			\end{aligned}
		\end{equation*}
\end{assumption}

Throughout this paper, for notational simplicity, we define $\Theta(t)\coloneqq(x_t,y_t,z^1_t,z^2_t,\tilde{z}^1_{(t,e)},\tilde{z}^2_{(t,e)})$ and
\begin{equation*}
		\tilde{g}(t,\Theta(t),u_t)\coloneqq\tilde{g}\Big(t,x_t,y_t,z^1_t,z^2_t,\int_{E_1}\tilde{z}^1_{(t,e)}\nu_1(de),\int_{E_2}\tilde{z}^2_{(t,e)}\nu_2(de),u_t\Big),
\end{equation*}
for $\tilde{g}=g,b_i,l$ and $(t,x,y,z^1,z^2,\tilde{z}^1,\tilde{z}^2)\in[0,T]\times \mathbb{R}\times \mathbb{R}\times \mathbb{R}\times \mathbb{R}\times L^2(E_1,\mathcal{B}(E_1),\nu_1;\mathbb{R})\times L^2(E_2,\mathcal{B}(E_2),\nu_2;\mathbb{R})$. Here, the space $L^2(E_i,\mathcal{B}(E_i),\nu_i;\mathbb{R})$, $i=1,2$ are defined in the Assumption \ref{assumption A4 in Zheng-Shi}, in the following.

For $t\in[0,T]$, define ${\mathcal{F}}_t^Y\coloneqq\sigma\{Y_s;0\leqslant s\leqslant t\}$. For $U\subseteq\mathbb{R}$, caused by the partially observed feature, the admissible control set is defined as follows:
\begin{equation}\label{ad_control_set}
	\begin{aligned}
		\mathcal{U}_{ad}[0,T]&\coloneqq\bigg\{u\Big|u_t\mbox{ is }\mathcal{F}_t^Y\mbox{-progressive }U\mbox{-valued process, such that }\sup_{0\leqslant t\leqslant T}\bar{\mathbb{E}}|u_t|^p<\infty,\\
		&\qquad \mbox{ for any $p>1$ }\mbox{and }\bar{\mathbb{E}}\int_0^T|u_t|^2N_i(E_i,dt)<\infty,\mbox{\ for\ } i=1,2\bigg\}.
	\end{aligned}
\end{equation}
Our main goal in this paper is to select an optimal control $\bar{u}\in{\mathcal{U}_{ad}[0,T]}$ such that
\begin{equation*}
	J'(\bar{u})=\inf_{u\in\,\mathcal{U}_{ad}[0,T]}J'(u).
\end{equation*}

Similar as \cite{ZS23}, in order to solve the problem, we first set
\begin{equation}\label{RN}
	\begin{aligned}
		\Gamma_T\coloneqq&\exp\bigg\{-\int_0^T\sigma_3^{-1}(t)b_2\big(t,\Theta(t),u_t\big)d\tilde{W}^2_t-\frac{1}{2}\int_0^T|\sigma_3^{-1}(t)b_2\big(t,\Theta(t),u_t\big)|^2dt\\
		&\qquad -\int_0^T\int_{E_2}\ln{\lambda(t,x_{t-},e)}N_2(de,dt)-\int_0^T\int_{E_2}(1-\lambda(t,x_{t-},e))\nu_2(de)dt\bigg\}.
	\end{aligned}
\end{equation}
The following assumption is necessary to guarantee the Girsanov measure transformation.
\begin{assumption}\label{assumption A3 in Zheng-Shi}
	$\quad\bar{\mathbb{E}}\Big[\exp\big\{\int_0^T\int_{E_2}\frac{(1-\lambda(t,x_{t-},e))^2}{\lambda(t,x_{t-},e)}\nu_2(de)dt\big\}\Big]<\infty$.
\end{assumption}

Under Assumption \ref{assumption A3 in Zheng-Shi}, define a locally square-integrable martingale $M$ through
\begin{equation*}
		M(t)\coloneqq-\int_0^t\sigma_3^{-1}(s)b_2\big(s,\Theta(s),u_s\big)d\tilde{W}^2_s+\int_0^t\int_{E_2}\frac{1-\lambda(s,x_{s-},e)}{\lambda(s,x_{s-},e)}\tilde{N}^{\prime}_2(de,ds).
\end{equation*}
Moreover, $M_t-M_{t-}>-1,\ \bar{\mathbb{P}}\mbox{-}a.s.$, and
\begin{equation*}
	\begin{aligned}
		&\bar{\mathbb{E}}\bigg[\exp\Big\{\frac{1}{2}\langle M^c,M^c\rangle_T+\langle M^d,M^d\rangle_T\Big\}\bigg]\\
		&=\bar{\mathbb{E}}\bigg[\exp\Big\{\frac{1}{2}\int_0^T\big|\sigma_3^{-1}(t)b_2\big(t,\Theta(t),u_t\big)\big|^2dt+\int_0^T\int_{E_2}\frac{(1-\lambda(t,x_{t},e))^2}{\lambda(t,x_{t},e)}\nu_2(de)dt\Big\}\bigg]<\infty,
	\end{aligned}
\end{equation*}
where $M^c$ and $M^d$ are continuous and purely discontinuous martingale parts of $M$, respectively. Therefore, it follows that $\Gamma_t$, the Dol\'{e}ans-Dade exponential of $M$, is a martingale (Protter and Shimbo \cite{PS08}). Then we can define a probability measure $\mathbb{P}$ via
\begin{equation*}
	\frac{d\bar{\mathbb{P}}}{d\mathbb{P}}\coloneqq\tilde{\Gamma}_T\equiv(\Gamma_T)^{-1},
\end{equation*}
where $\tilde{\Gamma}$ satisfies the following equation:
\begin{equation}\label{RN2}
	\left\{
	\begin{aligned}
		d\tilde{\Gamma}_t&=\tilde{\Gamma}_t\sigma_3^{-1}(t)b_2\big(t,\Theta(t),u_t\big)dW^2_t+\int_{E_2}\tilde{\Gamma}_{t-}(\lambda(t,x_{t-},e)-1)\tilde{N}_2(de,dt),\quad t\in[0,T],\\
		\tilde{\Gamma}_0&=1.
	\end{aligned}
	\right.
\end{equation}
Moreover, under the new probability measure $\mathbb{P}$ and denoting the expectation with respect to $\mathbb{P}$ by $\mathbb{E}$, $W^1,W^2$ are mutually independent Brownian motions and $\tilde{N}_1, \tilde{N}_2$ are mutually independent Poisson martingale measures, where
\begin{equation}\label{relation1}
		dW^2_t=d\tilde{W}^2_t+\sigma_3^{-1}(t)b_2(t,\Theta(t),u_t)dt,\quad \tilde{N}_2(de,dt)=N_2(de,dt)-\nu_2(de)dt.
\end{equation}
Observing $\tilde{N}_2$ and $\tilde{N}_2^{\prime}$, then their relationship can be built as \eqref{relation W N}.

\subsection{Equivalent stochastic recursive optimal control problem}

Under the new probabilily $\mathbb{P}$, the cost functional can be rewritten as
\begin{equation}
	J'(u)=\mathbb{E}\Big[\rho_Te^{\theta\varphi(x_T,y_0)}\Big],
\end{equation}
where
\begin{equation*}
	\begin{aligned}
		\rho_t&\equiv\exp\biggl\{\theta\bigg[\int_0^T\Big\{\tilde{l}(s)-\frac{1}{2\theta}|\sigma^{-1}_3(s)b_2(s)|^2+\frac{1}{\theta}\int_{E_2}\big(\ln\lambda(s)+1-\lambda(s)\big)\nu_2(de)\Big\}ds\\
		&\qquad\quad +\frac{1}{\theta}\int_0^T\sigma^{-1}_3(s)b_2(s)dW^2_s+\frac{1}{\theta}\int_0^T\int_{E_2}\ln\lambda(s)\tilde{N}_2(de,ds)\bigg]\biggr\}.
	\end{aligned}
\end{equation*}
Inspired by \cite{EH03}, we introduce a process $\zeta$, which satisfies the following BSDEP:
\begin{equation}
	\left\{
	\begin{aligned} -d\zeta_t&=\biggl\{\tilde{l}(t,\Theta(t),u_t)+\frac{\theta}{2}\sum_{i=1}^2(\kappa^i_t)^2+\frac{1}{\theta}\sum_{i=1}^2\int_{E_i}\Big[e^{\theta\tilde{\kappa}^i_{(t,e)}}-\theta\tilde{\kappa}^i_{(t,e)}-1\Big]\nu_i(de)\\
         &\qquad+\sigma^{-1}_3(t)b_2(t)\kappa^2_t+\int_{E_2}(\lambda(t)-1)\big(e^{\theta\tilde{\kappa}^2_{(t,e)}}-1\big)\nu_2(de)\biggr\}dt\\
		 &\quad-\sum_{i=1}^2\kappa^i_tdW^i_t-\sum_{i=1}^2\int_{E_i}\tilde{\kappa}^i_{(t,e)}\tilde{N}_i(de,dt),\quad t\in[0,T],\\
		\zeta_T&=\varphi(x_T,y_0).
	\end{aligned}
	\right.
\end{equation}
It follows from It\^o's formula to $\rho_te^{\theta\zeta_t}$ that
\begin{equation*}
	J'(u)=\mathbb{E}\Big[\rho_Te^{\theta\varphi(x_T,y_0)}\Big]=e^{\theta\zeta_0}.
\end{equation*}

Noting that $\theta>0$, our optimal control problem is equivalent to minimize the cost functional $J(u)\coloneqq\zeta_0$ over $\mathcal{U}_{ad}[0,T]$, that is
\begin{equation}\label{cost functional inf}
	\inf_{u\in\,\mathcal{U}_{ad}[0,T]}J(u),
\end{equation}
corresponding to the following state equation, where we have substituted \eqref{relation W N} into \eqref{state equation x y}:
\begin{equation}\label{state equation}
	\left\{
	\begin{aligned}
		dx_t&=b(t,\Theta(t),u_t)dt+\sigma_{i}(t,x_t,y_t,u_t)dW^{i}_t+\int_{E_i}f_i(t,x_t,y_t,u_t,e)\tilde{N}_{i}(de,dt),\\
		-dy_t&=g(t,\Theta(t),u_t)dt-z^i_tdW^i_t-\int_{E_i}\tilde{z}^i_{(t,e)}\tilde{N}_i(de,dt),\\
		-d\zeta_t&=l\Big(t,\Theta(t),\kappa^1_t,\kappa^2_t,\int_{E_1}\tilde{\kappa}^1_{(t,e)}\nu_1(de),\int_{E_2}\tilde{\kappa}^2_{(t,e)}\nu_2(de),u_t\Big)dt\\
        &\quad-\kappa^i_tdW^i_t-\int_{E_i}\tilde{\kappa}^i_{(t,e)}\tilde{N}_i(de,dt),\quad t\in[0,T],\\
		x_0&=x_0,\quad y_T=\phi(x_T),\quad \zeta_T=\varphi(x_T,y_0),
	\end{aligned}
	\right.
\end{equation}
where
\begin{equation}\label{defination of b l in state equation}
	\begin{aligned}
		&b(t,\Theta(t),u_t)\coloneqq b_1(t)-\sigma_2(t)\sigma^{-1}_3(t)b_2(t)-\int_{E_2}(\lambda(t)-1)f_2(t,e)\nu_2(de),\\
		&l\Big(t,\Theta(t),\kappa^1_t,\kappa^2_t,\int_{E_1}\tilde{\kappa}^1_{(t,e)}\nu_1(de),\int_{E_2}\tilde{\kappa}^2_{(t,e)}\nu_2(de),u_t\Big)\\
        &\coloneqq\tilde{l}(t,\Theta(t),u_t)+\frac{\theta}{2}\sum_{i=1}^2(\kappa^i_t)^2+\frac{1}{\theta}\sum_{i=1}^2\int_{E_i}\Big[e^{\theta\tilde{\kappa}^i_{(t,e)}}-\theta\tilde{\kappa}^i_{(t,e)}-1\Big]\nu_i(de)\\
        &\quad+\sigma^{-1}_3(t)b_2(t)\kappa^2_t+\int_{E_2}(\lambda(t)-1)\big(e^{\theta\tilde{\kappa}^2_{(t,e)}}-1\big)\nu_2(de).
	\end{aligned}
\end{equation}

In this paper, we adopt Einstein's notation on summation. That is, we use repeated scripts to stand for the summation over these scripts.

\subsection{$L^p$ solution of FBSDEPs}

Noting the couple feature of \eqref{state equation}, we firstly, consider the following fully coupled FBSDEP of $(x,y,z^i,\tilde{z}^i)$:
\begin{equation}\label{equation x y}
	\left\{
	\begin{aligned}
		dx_t&=b(t,\Theta(t))dt+\sigma_{i}(t,x_t,y_t)dW^{i}_t+\int_{E_i}f_i(t,x_t,y_t,e)\tilde{N}_{i}(de,dt),\\
		-dy_t&=g(t,\Theta(t))dt-z^i_tdW^i_t-\int_{E_i}\tilde{z}^i_{(t,e)}\tilde{N}_i(de,dt),\quad t\in[0,T],\\
		x_0&=x_0,\quad y_T=\phi(x_T),
	\end{aligned}
	\right.
\end{equation}
where $\Theta(t)\coloneqq\big(x_t,y_t,z^1_t,z^2_t,\tilde{z}^1_{(t,e)},\tilde{z}^2_{(t,e)}\big)$ and coefficients $b$, $\sigma_1$, $\sigma_2$, $f_1$, $f_2$, $g$ and $\phi$ could be random.

We introduce the following norms and spaces for any $p\geqslant1$.

$\mathcal{S}^p$ is the space of $\mathbb{R}$-valued c\`adl\`ag and $\mathbb{F}$-adapted processes $X$ such that
$$
\Vert X\Vert^p_{\mathcal{S}^p}\coloneqq\mathbb{E}\bigg[\sup_{0\leqslant t\leqslant T}|X_t|^p\bigg]<\infty.
$$

\noindent$\mathcal{S}^\infty$ is the space of $\mathbb{R}$-valued c\`adl\`ag and $\mathbb{F}$-progressively measurable processes $Y$ such that
$$
\Vert Y\Vert_{\mathcal{S}^\infty}\coloneqq\sup_{0\leqslant t\leqslant T}\Vert Y_t\Vert_\infty<\infty.
$$

\noindent$\mathbb{H}^p$ is the space of $\mathbb{R}$-valued and $\mathbb{F}$-predictable process $Z$ such that
$$
\Vert Z\Vert^p_{\mathbb{H}^p}\coloneqq\mathbb{E}\bigg(\int_0^T|Z_t|^2dt\bigg)^{\frac{p}{2}}<\infty.
$$

\noindent$\mathbb{J}^p_i$ is the space of $\mathbb{R}$-valued and $\mathbb{F}$-predictable process $U$ such that
$$
\Vert U\Vert^p_{\mathbb{J}^p_i}\coloneqq\mathbb{E}\bigg(\int_0^T\int_{E_i}|U_{(s,e)}|^2\nu_i(de)dt\bigg)^{\frac{p}{2}}<\infty,\quad i=1,2.
$$

\noindent For simplicity, we set $\mathcal{N}^p\coloneqq\mathcal{S}^p\times\mathbb{H}^p\times\mathbb{H}^p\times\mathbb{J}^p_1\times\mathbb{J}^p_2$ and $\mathcal{M}^p\coloneqq\mathcal{S}^p\times\mathcal{N}^p$.

\begin{assumption}\label{assumption A4 in Zheng-Shi}
	\begin{enumerate}[\bfseries (1)]
		\item Let $(\Omega,\mathcal{F},\{\mathcal{F}_t\}_{t\geqslant0},\mathbb{P})$ be a complete probability space, on which standard Brownian motions $\{W^1_t,W^2_t\}_{t\geqslant0}\in\mathbb{R}^2$ and Poisson random
              measures $N_i$ with the compensator $\mathbb{E}N_i(de,dt)=\nu_i(de)dt$, for $i=1,2,$ are mutually independent. Here $\nu_i$ is assumed to be a $\sigma$-finite L\'{e}vy measure on $(E_i,\mathcal{B}(E_i))$ with $\int_{E_i}(1\land|e|^2)\nu_i(de)<\infty$, for $i=1,2$.
		\item Define $L^2(E_i,\mathcal{B}(E_i),\nu_i;\mathbb{R})\coloneqq\big\{\tilde{z}^i_{(t,e)}\in \mathbb{R}: \big[\int_{E_i}|\tilde{z}^i_{(t,e)}|^2\nu_i(de)\big]^{\frac{1}{2}}<\infty\big\}$, for $i=1,2$.
		\item $b$, $\sigma_1$, $\sigma_2$, $g$, $f_1$, $f_2$ and $\phi$ are measurable.
		\item $b$, $g$ are uniformly Lipschitz with respect to $(x,y,z^1,z^2,\tilde{z}^1,\tilde{z}^2)$, and $\phi(x)$ is uniformly Lipschitz with respect to $x\in \mathbb{R}$. $\sigma_1$, $\sigma_2$, $f_1$, $f_2$ are
              uniformly Lipschitz with respect to $(x,y)$. Especially, the Lipschitz coefficient of $f_i$ is a measurable function $\rho:E_i\rightarrow\mathbb{R}^{+}$ with $\int_{E_i}\rho^p(e)\nu_i(de)<\infty$ $(p\geqslant2)$, such that for all $t\in[0,T]$, $(x,y)\in\mathbb{R}\times\mathbb{R}$, $(x',y')\in\mathbb{R}\times\mathbb{R}$, and $e\in E_i$, $\mathbb{P}\mbox{-}a.s.$,
		      \begin{equation*}
			     \big|f_i(t,x,y,e)-f_i(t,x',y',e)\big|\leqslant\rho(e)(|x-x'|+|y-y'|).
		      \end{equation*}
		\item For $p\geqslant2$, we have
		      \begin{equation*}
			  \begin{aligned}
				 &\mathbb{E}|\phi(0)|^p+\mathbb{E}\Big(\int_0^T|b(t,\omega,0,0,0,0,0,0)|dt\Big)^p+\mathbb{E}\Big(\int_0^T|\sigma_i(t,\omega,0,0)|^2dt\Big)^{\frac{p}{2}}\\
				 &+\mathbb{E}\Big(\int_0^T\int_{E_i}|f_i(t,\omega,0,0,e)|^2N_i(de,dt)\Big)^{\frac{p}{2}}+\mathbb{E}\Big(\int_0^T|g(t,\omega,0,0,0,0,0,0)|dt\Big)^p<\infty.
			  \end{aligned}
		      \end{equation*}
		\item For any $t\in[0,T]$, $\Theta\in \mathbb{R}\times \mathbb{R}\times \mathbb{R}\times \mathbb{R}\times L^2(E_1,\mathcal{B}(E_1),\nu_1;\mathbb{R})\times L^2(E_2,\mathcal{B}(E_2),\nu_2;\mathbb{R})$,
              $\mathbb{P}\mbox{-}a.s.$,
		      \begin{equation*}
			    |b(t,\Theta)|+|\sigma_i(t,x,y)|+|g(t,\Theta)|+|\phi(x)|\leqslant L(1+|x|+|y|+|z^1|+|z^2|+\Vert\tilde{z}^1\Vert+\Vert\tilde{z}^2\Vert),
		      \end{equation*}
		      and
		      \begin{equation*}
			    |f_i(t,x,y,e)|\leqslant\rho(e)(1+|x|+|y|).
		      \end{equation*}
	\end{enumerate}
\end{assumption}

The following $L^p$ result of FBSDEPs is of \cite{ZS23} (Theorem 2.2 and Theorem 2.6).

\begin{proposition}\label{theorem solvability xyz}
	Suppose that Assumption \ref{assumption A4 in Zheng-Shi} holds, for any $p\geqslant2$, there exists a constant $\tilde{T}>0$ depending on Lipschitz coefficients $L$ and $\rho$ such that, for every $0\leqslant T\leqslant \tilde{T}$, the fully coupled FBSDEP \eqref{equation x y} has a unique solution $\big(x,y,z^1,z^2,\tilde{z}^1,\tilde{z}^2\big)$ in $\mathcal{M}^p$. Moreover, the $L^p$($p\geqslant2$)-estimate hold:
	\begin{equation}\label{Lp estimate xy}
		\begin{aligned}
			&\mathbb{E}\bigg[\sup_{0\leqslant t\leqslant T}|x_t|^p+\sup_{0\leqslant t\leqslant T}|y_t|^p+\Big(\int_0^T|z^i_t|^2dt\Big)^{\frac{p}{2}}
			+\Big(\int_0^T\int_{E_i}|\tilde{z}^i|^2\nu_i(de)dt\Big)^{\frac{p}{2}}\bigg]\\
			&\leqslant C_{p,L,\rho}\mathbb{E}\bigg[|x_0|^p+|\phi(0)|^p+\Big(\int_0^T|g(t,0,0,0,0,0,0)|dt\Big)^p+\Big(\int_0^T|b(t,0,0,0,0,0,0)|dt\Big)^p\\
			&\qquad\qquad +\Big(\int_0^T|\sigma_i(t,0,0)|^2dt\Big)^{\frac{p}{2}}+\Big(\int_0^T\int_{E_i}|f_i(t,0,0,e)|^2N_i(de,dt)\Big)^{\frac{p}{2}}\bigg].
		\end{aligned}
	\end{equation}
\end{proposition}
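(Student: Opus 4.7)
The plan is to prove Proposition \ref{theorem solvability xyz} by a Banach fixed-point argument on the space $\mathcal{N}^p$ (the $y,z^i,\tilde z^i$ components), using the short horizon $\tilde T$ as the contraction parameter. Define a map $\Lambda:\mathcal{N}^p\to\mathcal{N}^p$ as follows: given an input $(y',z^{1\prime},z^{2\prime},\tilde z^{1\prime},\tilde z^{2\prime})$, freeze it inside the coefficients $b,\sigma_i,f_i$ and solve the resulting (now decoupled in $x$) SDEP for $x\in\mathcal{S}^p$; then, with this $x$, solve the Lipschitz BSDEP with terminal condition $\phi(x_T)$ to obtain the output $(y,z^1,z^2,\tilde z^1,\tilde z^2)$. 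A fixed point of $\Lambda$ is precisely a solution of \eqref{equation x y}. Both sub-problems are standard: for the forward SDEP, $b,\sigma_i,f_i$ are Lipschitz in $x$ alone with jump Lipschitz modulus $\rho$ satisfying $\int_{E_i}\rho^p(e)\nu_i(de)<\infty$ by Assumption \ref{assumption A4 in Zheng-Shi}(4), so Burkholder-Davis-Gundy (in Kunita form) and Gronwall yield a unique $x\in\mathcal{S}^p$ with the usual $L^p$-bound; for the BSDEP, the Lipschitz driver $g$ and $L^p$-terminal datum $\phi(x_T)$ put us in the classical Lipschitz BSDEP theory with Poisson jumps.

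For the contraction, take two inputs $(y'_j,z^{i\prime}_j,\tilde z^{i\prime}_j)$, $j=1,2$, and write $\Delta$ for differences. The SDEP $L^p$-estimate applied to $\Delta x$, with the $|\Delta x|^p$ terms absorbed via Gronwall, gives
\[
\|\Delta x\|_{\mathcal{S}^p}^p\leq C_p T^{\alpha}\Big(\|\Delta y'\|_{\mathcal{S}^p}^p+\sum_{i=1}^2\|\Delta z^{i\prime}\|_{\mathbb{H}^p}^p+\sum_{i=1}^2\|\Delta\tilde z^{i\prime}\|_{\mathbb{J}^p_i}^p\Big),
\]
for some $\alpha>0$ coming from H\"older in the drift, BDG in the Brownian and compensated-jump parts, and the $\rho$-integrability. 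The standard $L^p$-estimate for Lipschitz BSDEPs then yields
\[
\|\Delta y\|_{\mathcal{S}^p}^p+\sum_{i=1}^2\|\Delta z^i\|_{\mathbb{H}^p}^p+\sum_{i=1}^2\|\Delta\tilde z^i\|_{\mathbb{J}^p_i}^p\leq C_p\Big(\mathbb{E}|\Delta\phi(x_T)|^p+T^{p-1}\mathbb{E}\int_0^T|\Delta g|^p dt\Big)\leq C_p\|\Delta x\|_{\mathcal{S}^p}^p,
\]
up to a prefactor bounded uniformly in $T\leq 1$. Chaining these bounds, $\Lambda$ is Lipschitz on $\mathcal{N}^p$ with constant $C_p T^{\alpha}$, which becomes strictly less than $1$ for $T\leq\tilde T$, where $\tilde T$ depends only on $L$, $\rho$ and $p$. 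Banach's theorem then delivers existence and uniqueness of the fixed point in $\mathcal{M}^p$.

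The global a priori estimate \eqref{Lp estimate xy} is obtained by re-running the SDEP and BSDEP $L^p$-estimates on the fixed point itself, comparing against the zero-input and zero-terminal values. The inhomogeneous terms on the right-hand side are exactly those listed in \eqref{Lp estimate xy}, while the mixed $\|y\|_{\mathcal{S}^p},\|z^i\|_{\mathbb{H}^p},\|\tilde z^i\|_{\mathbb{J}^p_i}$ contributions appearing in the SDEP bound can be absorbed into the left-hand side using $T\leq\tilde T$ small and a final Gronwall step.

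The most delicate point is the $L^p$-control of the compensated jump stochastic integrals for $p>2$. BDG bounds $\mathbb{E}\sup_t|\int_0^t\int_{E_i}f_i\tilde N_i|^p$ by $C_p\mathbb{E}(\int_0^T\int_{E_i}|f_i|^2 N_i(de,dt))^{p/2}$, and the passage from the random measure $N_i$ to its compensator $\nu_i(de)dt$ requires a Kunita-type inequality together with the assumption $\int_{E_i}\rho^p(e)\nu_i(de)<\infty$ to produce a bound in terms of $\|f_i\|_{\mathbb{J}^p_i}$. Propagating these controls cleanly through both the forward and backward $L^p$-estimates, uniformly in the Lipschitz data and with an explicit $T^{\alpha}$ rate that survives the coupling between $x$ and $(y,z^i,\tilde z^i)$, is the main technical workhorse; once this is in place, the contraction and the a priori estimate follow by fairly routine bookkeeping.
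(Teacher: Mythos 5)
The paper does not prove this proposition itself but imports it from \cite{ZS23} (Theorems 2.2 and 2.6), where the argument is exactly the small-horizon contraction you describe: freeze the backward components, solve the forward SDEP, solve the Lipschitz BSDEP with terminal datum $\phi(x_T)$, and extract a factor $T^{\alpha}$ from the forward step to close the fixed point in $\mathcal{M}^p$. Your proposal is correct and follows essentially the same route, and you rightly flag the only genuinely delicate point, namely that for $p>2$ the passage between $\mathbb{E}\big(\int_0^T\int_{E_i}|f_i|^2N_i(de,dt)\big)^{p/2}$ and the compensated version requires a Kunita-type inequality relying on $\int_{E_i}\rho^p(e)\nu_i(de)<\infty$ from Assumption \ref{assumption A4 in Zheng-Shi}.
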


\subsection{Well-posedness of quadratic-exponential BSDEPs}

In this section, we study the well-posedness of the third equation in \eqref{state equation} of $(\zeta,\kappa^i,\tilde{\kappa}^i)$, which, actually, is the so-called Q$_{exp}$BSDEP. We consider the following general form:
\begin{equation}\label{equation QEXP BSDEP}
	\left\{
	\begin{aligned}
		-dY_t&=\gamma\big(t,Y_t,Z_t,\tilde{Z}_{(t,e)}\big)dt-Z_tdW_t-\int_E \tilde{Z}_{(t,e)}\tilde{N}(de,dt),\quad t\in[0,T],\\
		Y_T&=\xi,
	\end{aligned}
	\right.
\end{equation}
where $\xi:\Omega\rightarrow\mathbb{R}$, $\gamma:\Omega\times[0,T]\times\mathbb{R}\times\mathbb{R}\times\mathbb{L}^2(E,\mathcal{B}(E),\nu;\mathbb{R})\rightarrow\mathbb{R}$.

BMO-martingales play a crucial role in research of quadratic BSDEs (\cite{BE13,Zhang17}). The recent literature is very rich on the theory of continuous BMO-martingales. However, it is clearly not as well documented when it comes to c\`adl\`ag BMO-martingales. So, for the readers' convenience, some properties and notation for c\`adl\`ag BMO-martingales are listed in this subsection. We refer readers to \cite{Kazamaki94,KPZ15,FT18} and the references therein for more details.

$BMO$ is the space of square integrable c\`adl\`ag martingales $M$ such that for all stopping time $\tau\in[0,T]$,
$$
\Vert M\Vert_{BMO}\coloneqq\esssup_\tau\Big\Vert\mathbb{E}_\tau(M_T-M_{\tau-})^2\Big\Vert_\infty<\infty.
$$

\noindent$\mathbb{H}^2_{BMO}$ is the space of $\mathbb{F}$-progressively measurable processes $Z$ such that $\int_0^\cdot Z_sdW_s\in BMO$, i.e.,
$$
\Vert Z\Vert^2_{\mathbb{H}^2_{BMO}}\coloneqq\bigg\Vert\int_0^\cdot Z_sdW_s\bigg\Vert_{BMO}=\sup_\tau\bigg\Vert\mathbb{E}\Big[\int_\tau^T|Z_s|^2ds\Big|\mathcal{F}_\tau\Big]\bigg\Vert_\infty<\infty.
$$

\noindent$\mathbb{J}^2_{BMO}$ and $\mathbb{J}^2_B$ are the spaces of predictable processes $\tilde{Z}$ such that $\int_0^\cdot\int_{E}\tilde{Z}_{(s,e)}\tilde{N}(de,ds)\in BMO$, i.e.,
\begin{equation*}
	\begin{aligned}
		\Vert \tilde{Z}\Vert^2_{\mathbb{J}^2_{BMO}}&\coloneqq\bigg\Vert\int_0^\cdot \int_E \tilde{Z}_{(s,e)}\tilde{N}(de,ds)\bigg\Vert_{BMO}\\
		&=\sup_\tau\bigg\Vert\mathbb{E}\int_0^T\int_E|\tilde{Z}_{(s,e)}|^2N(de,ds)+(\Delta M_\tau)^2\bigg\Vert_\infty<\infty,
	\end{aligned}
\end{equation*}
where $\Delta M_\tau$ is a jump of $M\coloneqq\int_\tau^\cdot \int_E \tilde{Z}_{(s,e)}\tilde{N}(de,ds)$ at $\tau$, and
$$
\Vert \tilde{Z}\Vert^2_{\mathbb{J}^2_B}\coloneqq\sup_\tau\bigg\Vert\mathbb{E}\int_0^T\int_E|\tilde{Z}_{(t,e)}|^2\nu(de)ds+(\Delta M_\tau)^2\bigg\Vert_\infty<\infty,
$$
respectively.

\begin{lemma}
	Suppose $M$ is a square integrable martingale with initial value $M_0=0$. If $M$ is a $BMO$-martingale, then its jump component is essentially bounded $\Delta M\in\mathcal{S}^\infty$. On the other hand, if $\Delta M\in\mathcal{S}^\infty$ and $\sup_\tau\big\Vert\mathbb{E}[\langle M\rangle_T-\langle M\rangle_\tau|\mathcal{F}_\tau]\big\Vert_\infty<\infty$, then $M$ is a $BMO$-martingale.
\end{lemma}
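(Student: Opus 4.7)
The plan is to reduce both implications to the identity
\[
\mathbb{E}_\tau\bigl[(M_T-M_{\tau-})^2\bigr]=\mathbb{E}_\tau\bigl[(M_T-M_\tau)^2\bigr]+(\Delta M_\tau)^2,
\]
which holds at every stopping time $\tau\in[0,T]$. To get it, I write $M_T-M_{\tau-}=(M_T-M_\tau)+\Delta M_\tau$, square, and take $\mathcal{F}_\tau$-conditional expectation; the cross term vanishes because $\Delta M_\tau=M_\tau-M_{\tau-}$ is $\mathcal{F}_\tau$-measurable and $\mathbb{E}_\tau[M_T-M_\tau]=0$ by the optional sampling theorem applied to the square integrable martingale $M$.

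For the first direction, assume $M\in BMO$. The identity above immediately gives
\[
(\Delta M_\tau)^2\leqslant\mathbb{E}_\tau\bigl[(M_T-M_{\tau-})^2\bigr]\leqslant\Vert M\Vert_{BMO}^2
\]
for every stopping time $\tau$. Invoking the optional section theorem (applied to the optional set $\{(t,\omega):(\Delta M_t(\omega))^2>\Vert M\Vert_{BMO}^2\}$) upgrades this pointwise bound at every stopping time to $\sup_{t\in[0,T]}|\Delta M_t|\leqslant\Vert M\Vert_{BMO}$, $\mathbb{P}$-a.s., so $\Delta M\in\mathcal{S}^\infty$ with $\Vert\Delta M\Vert_{\mathcal{S}^\infty}\leqslant\Vert M\Vert_{BMO}$.

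For the converse, assume $\Delta M\in\mathcal{S}^\infty$ and $C:=\sup_\tau\bigl\Vert\mathbb{E}[\langle M\rangle_T-\langle M\rangle_\tau\vert\mathcal{F}_\tau]\bigr\Vert_\infty<\infty$. Since $M$ is square integrable, $M^2-\langle M\rangle$ is a martingale, hence
\[
\mathbb{E}_\tau\bigl[(M_T-M_\tau)^2\bigr]=\mathbb{E}_\tau\bigl[\langle M\rangle_T-\langle M\rangle_\tau\bigr]\leqslant C.
\]
Substituting into the identity gives
\[
\mathbb{E}_\tau\bigl[(M_T-M_{\tau-})^2\bigr]\leqslant C+\Vert\Delta M\Vert_{\mathcal{S}^\infty}^2,
\]
uniformly in $\tau$, so $\Vert M\Vert_{BMO}^2\leqslant C+\Vert\Delta M\Vert_{\mathcal{S}^\infty}^2<\infty$ and $M\in BMO$.

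The only delicate point is the passage from the stopping-time bound $(\Delta M_\tau)^2\leqslant\Vert M\Vert_{BMO}^2$ to the uniform bound $\Delta M\in\mathcal{S}^\infty$; this requires the optional (or, equivalently, the predictable when combined with the accessible/inaccessible decomposition of jumps) section theorem, which is the standard tool in the c\`adl\`ag setting and the only nonelementary ingredient. Everything else is an application of optional sampling and the fact that $M^2-\langle M\rangle$ is a martingale, so the proof is short once the identity for $\mathbb{E}_\tau[(M_T-M_{\tau-})^2]$ is in hand.
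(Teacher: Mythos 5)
Your argument is correct, and it is worth noting that the paper itself does not prove this lemma at all: it is listed among the preliminaries on c\`adl\`ag BMO-martingales and simply referred to \cite{Kazamaki94,KPZ15,FT18}, so there is no in-paper proof to compare against. Your proof is the standard self-contained one: the orthogonality identity $\mathbb{E}_\tau[(M_T-M_{\tau-})^2]=\mathbb{E}_\tau[(M_T-M_\tau)^2]+(\Delta M_\tau)^2$ (valid because $\Delta M_\tau$ is $\mathcal{F}_\tau$-measurable and optional sampling kills the cross term), the optional section theorem to pass from the stopping-time bound on $(\Delta M_\tau)^2$ to an evanescence statement and hence to $\Delta M\in\mathcal{S}^\infty$, and the martingale property of $M^2-\langle M\rangle$ for the converse. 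All three ingredients are applied correctly, and you rightly flag the section theorem as the only nonelementary step. The single blemish is a normalization mismatch with the paper's convention: here $\Vert M\Vert_{BMO}$ is defined as $\esssup_\tau\Vert\mathbb{E}_\tau(M_T-M_{\tau-})^2\Vert_\infty$ \emph{without} a square root, so your inequalities should read $(\Delta M_\tau)^2\leqslant\Vert M\Vert_{BMO}$ (hence $\Vert\Delta M\Vert_{\mathcal{S}^\infty}\leqslant\Vert M\Vert_{BMO}^{1/2}$) and $\Vert M\Vert_{BMO}\leqslant C+\Vert\Delta M\Vert_{\mathcal{S}^\infty}^2$; this is purely cosmetic and does not affect the validity of either implication.
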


The following result is the so-called energy inequality.
\begin{lemma}\label{energy inequality}
	Let $Z\in\mathbb{H}^2_{BMO}$ and $\tilde{Z}\in\mathbb{J}^2_{BMO}$. Then, for any $n\in\mathbb{N}$,
	\begin{equation*}
		\begin{aligned}
			&\mathbb{E}\bigg(\int_0^T|Z_s|^2ds\bigg)^n\leqslant n!\Big(\Vert Z\Vert^2_{\mathbb{H}^2_{BMO}}\Big)^n,\\
			&\mathbb{E}\bigg(\int_0^T\int_E|\tilde{Z}_{(s,e)}|^2N(de,ds)\bigg)^n\leqslant n!\Big(\Vert \tilde{Z}\Vert^2_{\mathbb{J}^2_{BMO}}\Big)^n,\\
			&\mathbb{E}\bigg(\int_0^T\int_E|\tilde{Z}_{(s,e)}|^2\nu(de)ds\bigg)^n\leqslant n!\Big(\Vert \tilde{Z}\Vert^2_{\mathbb{J}^2_{B}}\Big)^n\leqslant n!\Big(\Vert \tilde{Z}\Vert^2_{\mathbb{J}^2_{BMO}}\Big)^n.
		\end{aligned}
	\end{equation*}
\end{lemma}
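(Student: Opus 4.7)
The strategy is to prove all three estimates simultaneously by induction on $n\geqslant 1$, working throughout with the stronger conditional formulation
\[
\mathbb{E}\Bigl[\Bigl(\int_\tau^T|Z_u|^2\,du\Bigr)^n\,\Big|\,\mathcal{F}_\tau\Bigr]\leqslant n!\bigl(\Vert Z\Vert^2_{\mathbb{H}^2_{BMO}}\bigr)^n
\]
for every $[0,T]$-valued stopping time $\tau$, from which the unconditional statement follows by taking $\tau=0$. The case $n=1$ is exactly the definition of the BMO-norm. For the inductive step I would apply the pathwise identity
\[
\Bigl(\int_\tau^T|Z_u|^2\,du\Bigr)^n=n\int_\tau^T|Z_s|^2\Bigl(\int_s^T|Z_u|^2\,du\Bigr)^{n-1}\,ds,
\]
take the conditional expectation given $\mathcal{F}_\tau$, use Fubini and the tower property to insert $\mathbb{E}[\,\cdot\,|\mathcal{F}_s]$ in front of the $(n-1)$-th power, invoke the induction hypothesis at the stopping time $s$ to bound this inner term by $(n-1)!\,(\Vert Z\Vert^2_{\mathbb{H}^2_{BMO}})^{n-1}$, and close with a final application of the BMO-definition to the remaining $\mathbb{E}[\int_\tau^T|Z_s|^2\,ds\,|\,\mathcal{F}_\tau]$.

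For the two Poisson estimates I would run the same induction on the increasing process $A_t\coloneqq\int_0^t\int_E|\tilde Z_{(s,e)}|^2\,\mu(de,ds)$, taking $\mu=N(de,ds)$ for the second inequality and $\mu(de,ds)=\nu(de)\,ds$ for the third. In the compensator case, $A$ is absolutely continuous and the argument is literally the continuous one above, with $\Vert Z\Vert^2_{\mathbb{H}^2_{BMO}}$ replaced by $\Vert \tilde Z\Vert^2_{\mathbb{J}^2_B}$. In the jump case $\mu=N$, the differentiation identity must be replaced by a discrete summation-by-parts,
\[
A_T^n-A_\tau^n=\sum_{\tau<s\leqslant T}\bigl(A_s^n-A_{s-}^n\bigr)\leqslant n\sum_{\tau<s\leqslant T}\Delta A_s\cdot A_s^{n-1}=n\int_\tau^T A_s^{n-1}\,dA_s,
\]
using the elementary inequality $a^n-b^n\leqslant n(a-b)a^{n-1}$ for $a\geqslant b\geqslant 0$; conditioning on $\mathcal{F}_{s-}$ and iterating then carries the induction through with the norm $\Vert\tilde Z\Vert^2_{\mathbb{J}^2_{BMO}}$. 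The last comparison $\Vert\tilde Z\Vert^2_{\mathbb{J}^2_B}\leqslant\Vert\tilde Z\Vert^2_{\mathbb{J}^2_{BMO}}$ is immediate because $\nu(de)\,ds$ is the dual predictable projection of $N(de,ds)$, so the conditional expectations of the two integrals agree, while the $\mathbb{J}^2_{BMO}$-definition carries an extra non-negative $(\Delta M_\tau)^2$.

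The main technical obstacle is the bookkeeping in the jump case: unlike in the continuous setting, the induction hypothesis cannot be applied directly at $\mathcal{F}_s$ because of the discontinuity of $A$ at $s$, so one must condition on $\mathcal{F}_{s-}$ and use the pathwise inequality with $a=A_s$ (post-jump) rather than $a=A_{s-}$. This is precisely the role of the extra $(\Delta M_\tau)^2$ term absorbed into $\Vert\tilde Z\Vert^2_{\mathbb{J}^2_{BMO}}$. Once this is handled carefully, the structural mechanics of all three inequalities are identical and the induction closes in a unified manner, giving the stated energy bounds with the universal constants $n!$.
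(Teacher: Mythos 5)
The paper states this lemma without proof --- it is quoted as the standard energy inequality for (c\`adl\`ag) BMO-martingales, imported from the literature (Kazamaki; Kazi-Tani--Possama\"{\i}--Zhou; Fujii--Takahashi) --- so there is no in-paper argument to compare against; your proposal has to stand on its own. Your treatment of the first and third inequalities is the standard and correct induction: the identity $\bigl(\int_\tau^T|Z_u|^2du\bigr)^n=n\int_\tau^T|Z_s|^2\bigl(\int_s^T|Z_u|^2du\bigr)^{n-1}ds$ expresses the $n$-th power through the \emph{tail} $\int_s^T|Z_u|^2du$, whose conditional expectation given $\mathcal{F}_s$ is exactly what the induction hypothesis and the BMO-norm control, and the absolutely continuous case $\mu(de,ds)=\nu(de)\,ds$ is handled identically with $\Vert\tilde Z\Vert^2_{\mathbb{J}^2_B}$.

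The genuine gap is in the jump case. The decomposition you write,
\begin{equation*}
A_T^n-A_\tau^n=\sum_{\tau<s\leqslant T}\bigl(A_s^n-A_{s-}^n\bigr)\leqslant n\sum_{\tau<s\leqslant T}\Delta A_s\,A_s^{n-1}=n\int_\tau^TA_s^{n-1}\,dA_s,
\end{equation*}
is the \emph{forward} version: the integrand $A_s^{n-1}$ is a past quantity, so $\mathbb{E}[A_s^{n-1}\mid\mathcal{F}_{s-}]$ is essentially $A_{s-}^{n-1}$ plus a jump correction, which is unbounded and is not controlled by the BMO-norm or by the induction hypothesis (the only available bound is $A_s^{n-1}\leqslant A_T^{n-1}$, which is circular). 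This is inconsistent with your own continuous-case argument, where you correctly used the tail. The fix is to apply the elementary inequality $a^n-b^n\leqslant n(a-b)a^{n-1}$ to the decreasing tail process $G_s\coloneqq A_T-A_s$, i.e.\ with $a=A_T-A_{s-}$ and $b=A_T-A_s$, which yields $(A_T-A_\tau)^n\leqslant n\int_{(\tau,T]}(A_T-A_{s-})^{n-1}\,dA_s$; one then replaces the non-adapted integrand by its predictable projection (using that $\nu(de)\,ds$ compensates $N$) and applies the induction hypothesis in the form $\mathbb{E}[(A_T-A_{\sigma-})^{n-1}\mid\mathcal{F}_{\sigma-}]\leqslant(n-1)!\,(\Vert\tilde Z\Vert^2_{\mathbb{J}^2_{BMO}})^{n-1}$, which is precisely where the $(\Delta M_\tau)^2$ term in the paper's definition of $\Vert\cdot\Vert_{\mathbb{J}^2_{BMO}}$ (taken from $\tau-$) enters. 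Two minor further points: the conditional form of the induction hypothesis you state bounds $\mathbb{E}[(A_T-A_\tau)^n\mid\mathcal{F}_\tau]$, not $\mathbb{E}[A_T^n-A_\tau^n\mid\mathcal{F}_\tau]$, and these are not the same object, so the statement being inducted on should be fixed once and for all; and in the paper's conventions both $\Vert\cdot\Vert_{\mathbb{J}^2_{BMO}}$ and $\Vert\cdot\Vert_{\mathbb{J}^2_B}$ already carry the $(\Delta M_\tau)^2$ term, so the comparison $\Vert\tilde Z\Vert_{\mathbb{J}^2_B}\leqslant\Vert\tilde Z\Vert_{\mathbb{J}^2_{BMO}}$ follows simply because the conditional expectations of $\int\int|\tilde Z|^2N(de,ds)$ and $\int\int|\tilde Z|^2\nu(de)\,ds$ over $(\tau,T]$ coincide, not because one norm has an extra term the other lacks.
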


Let $\mathcal{E}(M)$ be the Dol\'{e}an-Dade exponential. The following reverse H\"{o}lder's inequality holds.
\begin{lemma}
	Let $\delta>0$ be a positive constant and $M$ be a $BMO$-martingale satisfying $\Delta M_t\geqslant-1+\delta$, $\mathbb{P}$-a.s., for all $t\in[0,T]$. Then $(\mathcal{E}_t(M), t\in[0,T])$ is a uniformly integrable martingale, and for every stopping time $\tau$, there exists some $p>1$ and some positive constant $C_{p,\Vert M\Vert_{BMO}}$, such that
$$
\mathbb{E}\big[\mathcal{E}_T(M)^p|\mathcal{F}_\tau\big]\leqslant C_{p,\Vert M\Vert_{BMO}}\mathcal{E}_\tau(M)^p.
$$
\end{lemma}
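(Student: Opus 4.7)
The plan is to adapt Kazamaki's classical reverse Hölder inequality from continuous BMO theory to the càdlàg setting, leveraging two crucial facts: the hypothesis $\Delta M_t \geq -1+\delta$ guarantees strict positivity of $\mathcal{E}(M)$, and the preceding lemma automatically yields $\Delta M \in \mathcal{S}^\infty$ from the BMO property. First I would show that $\mathcal{E}(M)$ is a uniformly integrable martingale: since the jump condition forces $\mathcal{E}_t(M) > 0$, the process is a strictly positive local martingale with $\mathcal{E}_0(M)=1$, hence a non-negative supermartingale, and uniform integrability will be a byproduct of the stronger $L^p$-estimate established below for some $p>1$.

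The algebraic core is the identity
\begin{equation*}
\mathcal{E}_t(M)^p = \mathcal{E}_t(pM)\cdot \exp\Bigl(\tfrac{p(p-1)}{2}\langle M^c\rangle_t\Bigr)\prod_{s\leq t}\frac{(1+\Delta M_s)^p}{1+p\Delta M_s},
\end{equation*}
obtained by comparing the explicit Doléans-Dade formulas for $M$ and $pM$. Using $\Delta M \in \mathcal{S}^\infty$ together with $\Delta M \geq -1+\delta$, a second-order expansion $\ln[(1+x)^p/(1+px)] = \tfrac{p(p-1)}{2}x^2 + O(|x|^3)$ on the compact interval where $\Delta M$ takes its values produces, for $p>1$ sufficiently close to $1$, a constant $K_p$ satisfying $K_p \to 0$ as $p\downarrow 1$ and
\begin{equation*}
\mathcal{E}_t(M)^p \leq \mathcal{E}_t(pM)\cdot \exp\bigl(K_p\, [M,M]_t\bigr).
\end{equation*}

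To conclude, I would change measure via $d\mathbb{Q}/d\mathbb{P} = \mathcal{E}_T(pM)$, legitimate because $p\Delta M \geq -1 + p\delta - (p-1) > -1$ and $pM$ remains BMO, so by the first part of the argument applied to $pM$ the Doléans-Dade exponential is a true martingale. Then for any stopping time $\tau$,
\begin{equation*}
\frac{\mathbb{E}[\mathcal{E}_T(M)^p\mid \mathcal{F}_\tau]}{\mathcal{E}_\tau(M)^p} \leq C\,\mathbb{E}^{\mathbb{Q}}\!\left[\exp\!\bigl(K_p([M,M]_T-[M,M]_\tau)\bigr)\Bigm|\mathcal{F}_\tau\right],
\end{equation*}
where the leftover factor $C$ comes from the reverse identity applied at $\tau$. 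Expanding the exponential and applying the energy inequality from the preceding lemma (noting that $[M,M]_T - [M,M]_\tau = \int_\tau^T|Z_s|^2 ds + \int_\tau^T\!\int_E|\tilde Z_{(s,e)}|^2 N(de,ds)$ in the Brownian/Poisson decomposition of $M$, and that BMO norms are preserved up to a multiplicative constant under the measure change) bounds the right-hand side by a geometric series in $K_p\|M\|^2_{BMO}$. The main obstacle is the calibration of $p$: it must be chosen close enough to $1$ that $K_p\|M\|^2_{BMO} < 1$ (so the energy-inequality series converges) and that $\mathcal{E}(pM)$ remains a genuine martingale (so that $p\Delta M > -1$), yet strictly greater than $1$ for the statement to be non-trivial. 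Both requirements are simultaneously satisfiable thanks to the fixed positive quantities $\delta$ and $\|\Delta M\|_{\mathcal{S}^\infty}$, and the final constant $C_{p,\|M\|_{BMO}}$ depends on $p$, $\delta$, and $\|M\|_{BMO}$.
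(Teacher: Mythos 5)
The paper offers no proof of this lemma: it is imported from the c\`adl\`ag BMO literature (Kazamaki \cite{Kazamaki94}, Kazi-Tani et al. \cite{KPZ15}, Fujii and Takahashi \cite{FT18}), so your sketch can only be measured against the standard argument. Your algebraic skeleton is the right one: the identity $\mathcal{E}(M)^p=\mathcal{E}(pM)\exp\bigl(\tfrac{p(p-1)}{2}\langle M^c\rangle\bigr)\prod\frac{(1+\Delta M)^p}{1+p\Delta M}$ is correct, the bound $\ln\frac{(1+x)^p}{1+px}\leqslant K_p x^2$ with $K_p\downarrow 0$ is valid on the compact range of $\Delta M$ guaranteed by $\Delta M\geqslant -1+\delta$ and $\Delta M\in\mathcal{S}^\infty$, and the calibration of $p$ close to $1$ is the standard one.

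The way you close the argument, however, is circular at two points, and this is a genuine gap. First, the Girsanov step needs $\mathcal{E}(pM)$ to be a \emph{true} uniformly integrable martingale, and you justify this ``by the first part of the argument applied to $pM$'' --- but your first part only yields that $\mathcal{E}(pM)$ is a positive supermartingale and explicitly defers uniform integrability to the very $L^p$-estimate being proved; applied to $pM$ this launches an infinite regress. (The true-martingale property of the stochastic exponential of a BMO martingale is the hard content of the theorem; it is usually obtained separately, e.g.\ by partitioning $[0,T]$ with stopping times on which the BMO norm is small and applying the L\'epingle--M\'emin criterion piecewise.) Second, to run the energy inequality under $\mathbb{Q}$ you invoke preservation of BMO norms under the measure change; that preservation theorem (Kazamaki's Theorem 3.3/3.6 and its jump analogue) is itself proved \emph{using} the reverse H\"older inequality for the density $\mathcal{E}(pM)$, so it is not available at this stage. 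The standard proof sidesteps both issues by never changing measure: one writes $\mathcal{E}(M)^p=\mathcal{E}(rpM)^{1/r}\cdot(\text{correction term})$ for an auxiliary $r>1$, applies H\"older's inequality with conjugate exponents, and uses only the supermartingale inequality $\mathbb{E}[\mathcal{E}_T(rpM)\mid\mathcal{F}_\tau]\leqslant\mathcal{E}_\tau(rpM)$ --- valid for any nonnegative local martingale --- together with the conditional John--Nirenberg/energy estimate under $\mathbb{P}$ alone. Once the $R_p$ bound is in hand for all stopped processes with a uniform constant, class (D) and hence uniform integrability of $\mathcal{E}(M)$ do follow, so that part of your plan is salvageable; the measure-change step is the one that must be replaced.
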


Results above allow us to obtain immediately a Girsanov's theorem, which will be useful throughout this paper.

\begin{lemma}
	Consider a c\`adl\`ag martingale $M$ given by
	\begin{equation*}
		M_t\coloneqq\int_0^Ta_sdW_s+\int_0^T\int_{E}b_{(s,e)}\tilde{N}(de,ds),\quad\mathbb{P}\mbox{-}a.s.,
	\end{equation*}
where $b$ is bounded and $(a,b)\in\mathbb{H}^2_{BMO}\times\mathbb{J}^2_{BMO}$ and where there exists $\delta>0$ with $b_t\geqslant-1+\delta$, $\mathbb{P}\times\nu$-a.e., for all $t\in[0,T]$. Then, the probability measure $\mathbb{Q}$ defined by $\frac{d\mathbb{Q}}{d\mathbb{P}}=\mathcal{E}(M_\cdot)$ is well-defined and starting from any $\mathbb{P}$-martingale, by, as usual, changing adequately the drift and the jump
intensity, we can obtain a $\mathbb{Q}$-martingale.
\end{lemma}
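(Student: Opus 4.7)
The plan is to reduce the statement to the reverse H\"older lemma stated just above and then invoke the standard Girsanov theorem for jump-diffusions. The two tasks are therefore: (i) verify the hypotheses of the reverse H\"older lemma for $M$, so that $\mathcal{E}(M)$ is a uniformly integrable martingale and $\mathbb{Q}$ is well defined; (ii) read off the $\mathbb{Q}$-dynamics of the driving noises to justify the claim about transforming $\mathbb{P}$-martingales into $\mathbb{Q}$-martingales.

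First I would check that $M$ is itself a $BMO$-martingale. Because the continuous part $\int_0^{\cdot}a_s dW_s$ and the purely discontinuous part $\int_0^{\cdot}\int_E b_{(s,e)}\tilde{N}(de,ds)$ are orthogonal (driven by the independent $W$ and $\tilde{N}$), for any stopping time $\tau$ one has
\begin{equation*}
\mathbb{E}_{\tau}\bigl[(M_T-M_{\tau-})^2\bigr]
=\mathbb{E}_{\tau}\!\int_{\tau}^{T}\!a_s^2\,ds
+\mathbb{E}_{\tau}\!\int_{\tau}^{T}\!\!\int_{E}b_{(s,e)}^{2}\nu(de)\,ds+(\Delta M_{\tau})^{2},
\end{equation*}
which is essentially bounded by $\Vert a\Vert_{\mathbb{H}^{2}_{BMO}}^{2}+\Vert b\Vert_{\mathbb{J}^{2}_{BMO}}^{2}$ by definition of the two $BMO$-norms used in the paper. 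Hence $\Vert M\Vert_{BMO}<\infty$. Next, the jumps of $M$ are $\Delta M_t=\int_E b_{(t,e)}N(de,\{t\})$, and since $b_{(t,e)}\geqslant-1+\delta$ $\mathbb{P}\times\nu$-a.e.\ and jumps of the Poisson random measure occur one at a time, one obtains $\Delta M_t\geqslant-1+\delta$ almost surely for all $t\in[0,T]$. These are exactly the hypotheses of the preceding reverse H\"older lemma, so $\mathcal{E}(M)$ is a uniformly integrable positive martingale, and $d\mathbb{Q}/d\mathbb{P}=\mathcal{E}_T(M)$ defines a probability measure equivalent to $\mathbb{P}$.

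For the second part I would apply the classical Girsanov theorem for semimartingales (e.g.\ the jump version used in the previous filtering step of the paper): if $X$ is any $\mathbb{P}$-local martingale, then under $\mathbb{Q}$ the process $X-\langle X,M\rangle^{\mathbb{P}}$ is a $\mathbb{Q}$-local martingale, where $\langle\cdot,\cdot\rangle^{\mathbb{P}}$ denotes the predictable covariation under $\mathbb{P}$. Specialising to $W$ and $\tilde{N}$, one gets that
\begin{equation*}
W^{\mathbb{Q}}_t\coloneqq W_t-\int_{0}^{t}a_s\,ds
\end{equation*}
is a $\mathbb{Q}$-Brownian motion, and that the $\mathbb{P}$-compensator $\nu(de)\,ds$ of $N$ is transformed into the $\mathbb{Q}$-compensator $(1+b_{(s,e)})\nu(de)\,ds$, so that
\begin{equation*}
\tilde{N}^{\mathbb{Q}}(de,ds)\coloneqq N(de,ds)-(1+b_{(s,e)})\nu(de)\,ds
\end{equation*}
is a $\mathbb{Q}$-martingale measure; the positivity $1+b\geqslant\delta>0$ guarantees that this remains a legitimate (positive) intensity. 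Together these give the stated recipe for turning any $\mathbb{P}$-martingale into a $\mathbb{Q}$-martingale by compensating the drift of its Brownian part and the jump intensity of its jump part.

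The only non-routine step is the uniform integrability of $\mathcal{E}(M)$, which is exactly what the reverse H\"older lemma supplies; everything else is then an assembly of standard estimates and a direct invocation of the jump-diffusion Girsanov theorem. I would therefore expect the main care to go into presenting the $BMO$-norm bound neatly and in explicitly writing down the resulting $(W^{\mathbb{Q}},\tilde{N}^{\mathbb{Q}})$ so that the lemma is readily applicable later in the paper.
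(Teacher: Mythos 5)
Your proposal is correct and follows essentially the same route the paper intends: the paper gives no written proof, asserting the lemma follows "immediately" from the preceding BMO results (in particular the reverse H\"older inequality for $BMO$-martingales with jumps bounded below by $-1+\delta$) combined with the standard Girsanov--Meyer theorem for jump-diffusions, which is exactly what you verify and assemble. Your explicit check that $\Vert M\Vert_{BMO}\leqslant\Vert a\Vert^2_{\mathbb{H}^2_{BMO}}+\Vert b\Vert^2_{\mathbb{J}^2_{BMO}}$ and that $\Delta M_t\geqslant-1+\delta$ (since Poisson jumps occur one at a time), followed by the identification of $W^{\mathbb{Q}}$ and the new compensator $(1+b_{(s,e)})\nu(de)\,ds$, is a faithful expansion of the argument the authors leave implicit.
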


We need now to specify in more details the assumptions we make on the generator $\gamma$. The most important one in our setting
will be the quadratic-exponential structure of Assumption \ref{assumption QEXP} below. It is the natural generalization to the jump case of the usual quadratic growth assumption in $z$. Before proceeding further, let us define the following function
\begin{equation}
	[\tilde{Z}]_\theta\coloneqq\int_{E}\frac{1}{\theta}\Big[e^{\theta \tilde{Z}_{(t,e)}}-1-\theta \tilde{Z}_{(t,e)}\Big]\nu(de).
\end{equation}

Now let us back to \eqref{equation QEXP BSDEP}. Let us introduce the quadratic-exponential structure condition by Barrieu and El Karoui \cite{BE13} and extended to the jump-diffusion case by Ngoupeyou \cite{Ngoupeyou10} or El Karoui et al. \cite{EMN16}.
\begin{assumption}\label{assumption QEXP}
	\begin{enumerate}[\bfseries (1)]
		\item (Q$_{exp}$ structure condition) The map $(\omega,t)\rightarrow\gamma(\omega,t,\cdot,\cdot,\cdot)$ is $\mathbb{F}$-progressively measurable.
               For all $(y,z,u)\in\mathbb{R}\times\mathbb{R}\times\mathbb{L}^2(E,\nu;\mathbb{R})$, there exist two constants $\beta\geqslant0$ and $\theta\geqslant0$ and a positive $\mathbb{F}$-progressively measurable process $\{\alpha_t\}_{t\geqslant0}$ such that
		       \begin{equation}
			   \begin{aligned}
				  -\alpha_t-\beta|y|-\frac{\theta}{2}|z|^2-[-\tilde{z}]_\theta \leqslant\gamma(t,y,z,u)\leqslant\alpha_t+\beta|y|+\frac{\theta}{2}|z|^2+[\tilde{z}]_\theta,\quad a.e.\,t\in[0,T],\ \mathbb{P}\mbox{-}a.s..
			   \end{aligned}
		       \end{equation}
	    \item $\xi$ and $\{\alpha_t\}_{t\geqslant0}$ are essentially bounded, i.e., $\Vert\xi\Vert_{\infty},\Vert\alpha\Vert_{\mathcal{S}^\infty}<\infty$.
        \item For each $M>0$, and for all $(y,z,\tilde{z})$, $(y',z',\tilde{z}')\in\mathbb{R}\times\mathbb{R}\times\mathbb{L}^2(E,\nu;\mathbb{R})$ satisfying $|y|$, $|y'|$,
              $\Vert \tilde{z}\Vert_{\mathbb{L}^\infty(\nu)}$, $\Vert \tilde{z}'\Vert_{\mathbb{L}^\infty(\nu)}\leqslant M$, there exists some positive constant $K_M$ possibly depending on $M$ such that
              \begin{equation*}
	          \begin{aligned}
		        &|\gamma(t,y,z,\tilde{z})-\gamma(t,y',z',\tilde{z}')|\leqslant K_M\big(|y-y'|+\Vert \tilde{z}-\tilde{z}'\Vert_{\mathbb{L}^2(\nu)}\big)\\
		        &\quad+K_M\big(1+|z|+|z'|+\Vert \tilde{z}\Vert_{\mathbb{L}^2(\nu)}+\Vert \tilde{z}\Vert_{\mathbb{L}^2(\nu)}\big)|z-z'|,\quad a.e.\,t\in[0,T],\ \mathbb{P}\mbox{-}a.s..
	          \end{aligned}
              \end{equation*}
        \item ($A_{\Gamma}$ condition). For all $t\in[0,T]$, $M>0$, $y$, $z\in\mathbb{R}$, $\tilde{z}$, $\tilde{z}'\in\mathbb{L}^2(E,\nu;\mathbb{R})$ with $|y|$, $\Vert \tilde{z}\Vert_{\mathbb{L}^\infty(\nu)}$,
              $\Vert \tilde{z}'\Vert_{\mathbb{L}^\infty(\nu)}\leqslant M$, there exists a $\mathcal{P}\otimes\mathcal{B}(E)$-measurable process $\Gamma^{y,z,\tilde{z},\tilde{z}'}$ satisfying
              \begin{equation*}
	            \gamma(t,y,z,\tilde{z})-\gamma(t,y,z,\tilde{z}')\leqslant\int_{E}\Gamma^{y,z,\tilde{z},\tilde{z}'}_{(t,e)}[\tilde{z}_e-\tilde{z}'_e]\nu(de),\quad a.e.\,t\in[0,T],\ \mathbb{P}\mbox{-}a.s.,
              \end{equation*}
              and $C_M^1(1\wedge|e|)\leqslant\Gamma^{y,z,\tilde{z},\tilde{z}'}_{(t,e)}\leqslant C_M^2(1\wedge|e|)$ with two constants $C_M^1$, $C_M^2$. Here, $C_M^1\geqslant-1+\delta$ for some $\delta>0$ and $C_M^2>0$ depend on $M$.
     \end{enumerate}
\end{assumption}

The following result is from \cite{FT18}'s Lemma 3.2, Lemma 3.3 and Theorem 4.1.

\begin{proposition}\label{theorem solvability zeta}
	Under Assumption \ref{assumption QEXP}, there exists a unique bounded solution $(Y,Z,\tilde{Z})\in\mathcal{S}^\infty\times\mathbb{H}^2_{BMO}\times\mathbb{J}^2_{BMO}$ of the BSDEP \eqref{equation QEXP BSDEP}. Moreover, it satisfies
\begin{equation*}\label{norm YZU}
\begin{aligned}
	\Vert Y\Vert_{\mathcal{S}^\infty}&\leqslant e^{\beta T}\big(\Vert\xi\Vert_\infty+T\Vert\alpha\Vert_{\mathcal{S}^\infty}\big),\\
	\Vert Z\Vert_{\mathbb{H}^2_{BMO}}+\Vert \tilde{Z}\Vert_{\mathbb{J}^2_{BMO}}&\leqslant C\bigg(\Vert Y\Vert_{\mathcal{S}^\infty}+\Vert\xi\Vert_\infty
      +\sup_\tau\bigg\Vert\mathbb{E}\Big[\int_\tau^T|\gamma(s,0,0,0)|ds\Big|\mathcal{F}_\tau\Big]\bigg\Vert_\infty\bigg),\\
	\Vert Y\Vert^p_{\mathcal{S}^p}+\Vert Z\Vert^p_{\mathbb{H}^p}+\Vert \tilde{Z}\Vert^p_{\mathbb{J}^p}&
    \leqslant C'\bigg(\mathbb{E}\bigg[|\xi|^{p\bar{q}^2}+\Big(\int_0^T|\gamma(s,0,0,0)|ds\Big)^{p\bar{q}^2}\bigg]\bigg)^{\frac{1}{\bar{q}^2}},\quad\forall p\geqslant2,\ \forall\bar{q}\geqslant q_*,
\end{aligned}
\end{equation*}
where $q_*$ is a positive constant depending on $(K_M, \theta, \beta, T, \Vert\xi\Vert_{\infty}, \Vert\alpha\Vert_{\mathcal{S}^{\infty}})$.
\end{proposition}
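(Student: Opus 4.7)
The plan is to follow the well-established three-step strategy for quadratic-exponential BSDEPs (as developed in \cite{BE13,Ngoupeyou10,EMN16,FT18}): first establish a priori estimates in $\mathcal{S}^\infty\times\mathbb{H}^2_{BMO}\times\mathbb{J}^2_{BMO}$, then prove uniqueness via a Girsanov-type argument enabled by Assumption \ref{assumption QEXP}\textbf{(4)}, and finally obtain existence through a truncation/approximation procedure.

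First I would derive the $\mathcal{S}^\infty$ bound on $Y$. Applying It\^o's formula to $e^{\theta e^{\beta(T-t)}|Y_t|}$ and using the two-sided Q$_{exp}$ growth bound in Assumption \ref{assumption QEXP}\textbf{(1)}, the quadratic term $\frac{\theta}{2}|z|^2$ is cancelled by the It\^o correction and the exponential jump term $[\tilde{z}]_\theta$ is cancelled by the compensator from the jump part (this is exactly where the shape of the structure condition is tailored to the It\^o calculus). After taking expectations one obtains $\Vert Y\Vert_{\mathcal{S}^\infty}\leqslant e^{\beta T}(\Vert\xi\Vert_\infty+T\Vert\alpha\Vert_{\mathcal{S}^\infty})$. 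With $Y$ now bounded, I would apply It\^o's formula once more, this time to $e^{\theta(Y_t-Y_\tau)}$ on $[\tau,T]$ for an arbitrary $\mathbb{F}$-stopping time $\tau$; taking conditional expectations and rearranging gives uniform control of $\mathbb{E}[\int_\tau^T|Z_s|^2 ds+\int_\tau^T\int_E|\tilde{Z}_{(s,e)}|^2 N(de,ds)\mid\mathcal{F}_\tau]$, which is precisely the BMO estimate.

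For uniqueness, I would take two bounded solutions, form the difference $(\delta Y,\delta Z,\delta\tilde{Z})$, and linearize the driver. The dependence on $z$ is handled by Assumption \ref{assumption QEXP}\textbf{(3)}: the coefficient multiplying $\delta Z$ has linear growth in $|Z|+|Z'|+\Vert\tilde{Z}\Vert_{\mathbb{L}^2(\nu)}$, hence defines a BMO integrand because of the a priori estimate, so Girsanov produces an equivalent measure $\mathbb{Q}$ removing the drift term in $\delta Z$. The dependence on $\tilde{z}$ is controlled by Assumption \ref{assumption QEXP}\textbf{(4)}: the comparator $\Gamma^{y,z,\tilde z,\tilde z'}$ satisfies $\Gamma\geqslant C_M^1(1\wedge|e|)$ with $C_M^1>-1+\delta$, which is exactly the condition making the Dol\'eans-Dade exponential of the associated jump martingale a true $BMO$-martingale and allowing a second change of measure. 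After symmetrizing in the two solutions (swapping roles), $\delta Y$ solves a linear BSDEP under an equivalent measure and must vanish. The $L^p$ estimate then follows by combining the $\mathcal{S}^\infty$ and BMO bounds with the reverse H\"older inequality applied twice to the Dol\'eans-Dade exponentials, which is what produces the exponent $\bar{q}^2$ and the constant $q_*$ depending on $(K_M,\theta,\beta,T,\Vert\xi\Vert_\infty,\Vert\alpha\Vert_{\mathcal{S}^\infty})$.

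For existence, I would approximate $\gamma$ by a sequence $\gamma^n$ that is globally Lipschitz in $(y,z,\tilde z)$ (replacing the quadratic growth in $z$ by a truncation and taming the exponential jump term while preserving the two-sided structural bound and the $A_\Gamma$ condition uniformly in $n$), solve each truncated BSDEP via standard Lipschitz theory for BSDEPs, and use the fact that the a priori estimates from step one depend only on $\Vert\xi\Vert_\infty$, $\Vert\alpha\Vert_{\mathcal{S}^\infty}$, $\beta$, $\theta$, $T$ and so survive the truncation. The main obstacle, as I expect, is precisely the limiting step: passing to the limit in the nonlinear driver requires upgrading weak convergence of $\tilde Z^n$ in $\mathbb{J}^2_{BMO}$ to something strong enough to identify $\int_E(e^{\theta\tilde{Z}^n_{(t,e)}}-1-\theta\tilde{Z}^n_{(t,e)})\nu(de)$ with its candidate limit; this is delicate because $\nu$ is only $\sigma$-finite and the integrand has exponential growth, so naive dominated convergence fails. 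This is overcome by combining the $\mathcal{S}^\infty$ bound on $Y$ (which, via the $A_\Gamma$ condition and the exponential transform $e^{\theta Y}$, yields an essential-supremum bound on $\tilde{Z}$ restricted to a well-chosen localizing sequence of sets) with the energy inequality of Lemma \ref{energy inequality}, after which stability of BMO martingales under equivalent change of measure closes the argument.
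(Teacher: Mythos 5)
The paper does not prove this proposition at all: it is imported verbatim from Fujii and Takahashi \cite{FT18} (their Lemma 3.2, Lemma 3.3 and Theorem 4.1), so there is no internal proof to compare yours against. That said, your outline is a faithful reconstruction of how the cited result is actually established: the $\mathcal{S}^\infty$ bound via an exponential transform tailored to the Q$_{exp}$ structure condition (the $\frac{\theta}{2}|z|^2$ term absorbed by the It\^o correction, the $[\tilde z]_\theta$ term by the jump compensator), the BMO bound from It\^o applied on $[\tau,T]$ after $Y$ is known to be bounded, uniqueness via linearization plus a double Girsanov change of measure using Assumption \ref{assumption QEXP}\textbf{(3)} for the $z$-direction and the $A_\Gamma$ condition \textbf{(4)} for the $\tilde z$-direction, and the $p\bar q^2$ exponent arising from two applications of the reverse H\"older inequality. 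The one place where your route genuinely diverges from \cite{FT18} is existence: you propose a global Lipschitz truncation of the driver followed by a limit passage, whereas the reference (following Barrieu--El Karoui \cite{BE13} and Morlais) uses a monotone stability argument for quadratic-exponential semimartingales, precisely because, as you yourself flag, identifying the limit of $\int_E(e^{\theta\tilde Z^n}-1-\theta\tilde Z^n)\nu(de)$ from weak BMO convergence is the hard point; monotone stability sidesteps this by producing a monotone, hence strongly convergent, approximating sequence. Your sketch names the right obstruction but does not fully resolve it, so as a standalone proof it is incomplete at that step; as a reading of the literature it is accurate.
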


\begin{remark}
	Suppose $M$ is a continuous BMO-martingale. There exist a function $\varPhi(x)=\bigl\{1+\frac{1}{x^2}\ln\frac{2x-1}{2(x-1)}\bigr\}^{\frac{1}{2}}-1$, such that, for all $1<p<p_*$, $M$ satisfies p-reverse H\"older's inequality. Here $p_*$ is determined by $\varPhi(p_*)=\Vert M\Vert_{BMO}$, and $q_*$ in the above proposition is the conjugate of $p_*$. When $M$ is a c\`adl\`ag BMO-martingale, there still exists a $p_*$. However, we can not determine $p_*$ explicitly as in the continuous setting. More details can be found in \cite{Kazamaki94} and \cite{FT18}.
\end{remark}

\section{Main results}

In this section, we study the global maximum principle for the partially observed risk-sensitive progressive optimal control problem of coupled FBSDEPs. The main result is an extension of the work in \cite{ZS23} and \cite{HJX22}.

\subsection{Spike variation}

As discussed above, we first transfer the original problem into a complete information stochastic recursive optimal control one, of the controlled coupled Q$_{exp}$FBSDEP (see also \eqref{state equation}):
\begin{equation}\label{equation state x y zeta in section 3}
	\left\{
	\begin{aligned}
		dx_t&=b(t,\Theta(t),u_t)dt+\sigma_{i}(t,x_t,y_t,u_t)dW^{i}_t+\int_{E_i}f_i(t,x_t,y_t,u_t,e)\tilde{N}_{i}(de,dt),\\
		-dy_t&=g(t,\Theta(t),u_t)dt-z^i_tdW^i_t-\int_{E_i}\tilde{z}^i_{(t,e)}\tilde{N}_i(de,dt),\\
		-d\zeta_t&=l\Big(t,\Theta(t),\kappa^1_t,\kappa^2_t,\int_{E_1}\tilde{\kappa}^1_{(t,e)}\nu_1(de),\int_{E_2}\tilde{\kappa}^2_{(t,e)}\nu_2(de),u_t\Big)dt\\
        &\quad-\kappa^i_tdW^i_t-\int_{E_i}\tilde{\kappa}^i_{(t,e)}\tilde{N}_i(de,dt),\quad t\in[0,T],\\
		x_0&=x_0,\quad y_T=\phi(x_T),\quad \zeta_T=\varphi(x_T,y_0),
	\end{aligned}
	\right.
\end{equation}
which is uniquely solvable due to Proposition \ref{theorem solvability xyz} and Proposition \ref{theorem solvability zeta}. Moreover, the cost functional we consider is the following recursive case (see \eqref{cost functional inf}):
\begin{equation}\label{cost functional zeta0}
	J(u)=\zeta_0.
\end{equation}

Since control domain $U$ is not necessarily convex, we apply the new spike variation technique established in \cite{STW20}, in our progressive framework. Suppose that $\bar{u}\in\mathcal{U}_{ad}$ is an optimal control. For any $\bar{t}\in[0,T]$, define $u^{\epsilon}$ as
\begin{equation*}
	\begin{aligned}
		u^\epsilon_s=
		\begin{cases}
			u,&\text{if $(s,\omega)\in\mathcal{O}\coloneqq\rrbracket\bar{t},\bar{t}+\epsilon\rrbracket\backslash\bigcup_{n=1}^\infty\llbracket T_n\rrbracket$},\\
			\bar{u}_s,&\text{otherwise},
		\end{cases}
	\end{aligned}
\end{equation*}
where $\llbracket T_n\rrbracket\coloneqq\big\{(\omega,t)\in\Omega\times[0,T]|T_n(\omega)=t\big\}$, which is the graph of stopping time $T_n$, is a progressive set, and $u$ is a bounded $\mathcal{F}^Y_{\bar{t}}$-measurable function taking values in $U$. The big difference is that the value of $u^\epsilon_s$ at $T_n(\omega)$ is equal to $\bar{u}_s$ rather than $u$ when some jump appears in $(\bar{t},\bar{t}+\epsilon]$, that is, some $T_n(\omega)$ is in $(\bar{t},\bar{t}+\epsilon]$. Moreover, due to technical difficulties caused by partial observability, we also assume that there are no jumps of the unobservable process appearing in $(\bar{t},\bar{t}+\epsilon]$. It is easy to show that $u^\epsilon\in\mathcal{U}_{ad}[0,T]$.

Besides Assumption \ref{assumption A2 in Zheng-Shi}, to get a more general result, we introduce the following assumption.
\begin{assumption}\label{assumption of l(t)}
	The first-order derivatives of $l$ with respect to $(x,y,z^1,z^2,\tilde{z}^1,\tilde{z}^2)$ are Lipschitz continuous and bounded, and also the second order derivatives of $l$ with respect to $(x,y,z^1,z^2,\tilde{z}^1,\tilde{z}^2,\kappa^1,\kappa^2,\tilde{\kappa}^1,\tilde{\kappa}^2)$ are continuous and bounded. There exist positive constants $L_1$, $L_2$, $L_3$, $L_4$, such that
	\begin{equation*}
		\begin{aligned}
			&|l(t,x,0,0,0,0,0,0,0,0,0,u)|\leqslant L_1,\\
	    \end{aligned}
	\end{equation*}
    \begin{equation*}
		\begin{aligned}		
            &|l(t,x,y,z^1,z^2,\tilde{z}^1,\tilde{z}^2,\kappa^1,\kappa^2,\tilde{\kappa}^1,\tilde{\kappa}^2,u_1)-l(t,x,y,z^1,z^2,\tilde{z}^1,\tilde{z}^2,\kappa^1,\kappa^2,\tilde{\kappa}^1,\tilde{\kappa}^2,u_2)|\\
			&\quad\leqslant L_2\big(1+|x|+|y|+|z^1|+|z^2|+\Vert\tilde{z}^1\Vert+\Vert\tilde{z}^2\Vert+|\kappa^1|+|\kappa^2|+\Vert\tilde{\kappa}^1\Vert+\Vert\tilde{\kappa}^2\Vert\big),\\
	\end{aligned}
	\end{equation*}
    \begin{equation*}
		\begin{aligned}				
            &|l_{\kappa^i}(t,x,y,z^1,z^2,\tilde{z}^1,\tilde{z}^2,\kappa^1,\kappa^2,\tilde{\kappa}^1,\tilde{\kappa}^2,u)|\leqslant L_3(1+|\kappa^i|),\quad i=1,2,\\
			&|l_{\tilde{\kappa}^i}(t,x,y,z^1,z^2,\tilde{z}^1,\tilde{z}^2,\kappa^1,\kappa^2,\tilde{\kappa}^1,\tilde{\kappa}^2,u)|\leqslant L_4(1+\Vert e^{\tilde{\kappa}^i}\Vert),\quad i=1,2.
		\end{aligned}
	\end{equation*}
\end{assumption}

Let $\bar{\Xi}\coloneqq(\bar{x},\bar{y},\bar{z}^1,\bar{z}^2,\bar{\tilde{z}}^1,\bar{\tilde{z}}^2,\bar{\kappa}^1,\bar{\kappa}^2,\bar{\tilde{\kappa}}^1,\bar{\tilde{\kappa}}^2)$ be the optimal trajectory corresponding to $\bar{u}$ and $\Xi^\epsilon\coloneqq(x^\epsilon,y^\epsilon,z^{1,\epsilon},z^{2,\epsilon},\tilde{z}^{1,\epsilon},\tilde{z}^{2,\epsilon},\kappa^{1,\epsilon},\kappa^{2,\epsilon},\tilde{\kappa}^{1,\epsilon},\tilde{\kappa}^{2,\epsilon})$ corresponding to $u^\epsilon$. Denote $\bar{\Theta}\coloneqq(\bar{x},\bar{y},\bar{z}^1,\bar{z}^2,\\\bar{\tilde{z}}^1,\bar{\tilde{z}}^2)$
and $\Theta^\epsilon\coloneqq(x^{\epsilon},y^{\epsilon},z^{1,\epsilon},z^{2,\epsilon},\tilde{z}^{1,\epsilon},\tilde{z}^{2,\epsilon})$.

For simplicity, for $\psi=\sigma_i$, $f_i$, $i=1,2$, we denote
\begin{equation*}
	\begin{aligned}
		\psi(t)&\coloneqq\psi(t,\bar{x}_t,\bar{y}_t,\bar{u}_t),\quad\psi_{x}(t)\coloneqq\psi_{x}(t,\bar{x}_t,\bar{y}_t,\bar{u}_t),\\
		\delta\psi(t)&\coloneqq\psi(t,\bar{x}_t,\bar{y}_t,u_t)-\psi(t),\quad\delta\psi_{x}(t)\coloneqq\psi_{x}(t,\bar{x}_t,\bar{y}_t,u_t)-\psi_{x}(t);
	\end{aligned}
\end{equation*}
for $\psi=b$, $g$, we denote
\begin{equation*}
	\begin{aligned}
		\psi(t)&\coloneqq\psi(t,\bar{\Theta}(t),\bar{u}_t),\quad\psi_x(t)\coloneqq\psi_x(t,\bar{\Theta}(t),\bar{u}_t),\\
		\delta\psi(t)&\coloneqq\psi(t,\bar{\Theta}(t),u_t)-\psi(t),\quad\delta\psi_{x}(t)\coloneqq\psi_x(t,\bar{\Theta}(t),u_t)-\psi_x(t),\\
		\delta\psi(t,\Delta^1,\Delta^2)&\coloneqq\psi(t,\bar{x}_t,\bar{y}_t,\bar{z}^1_t+\Delta^1_t,\bar{z}^2_t+\Delta^2_t,\bar{\tilde{z}}^1_{(t,e)},\bar{\tilde{z}}^2_{(t,e)},u_t)-\psi(t),\\
		\delta\psi_x(t,\Delta^1,\Delta^2)&\coloneqq\psi_x(t,\bar{x}_t,\bar{y}_t,\bar{z}^1_t+\Delta^1_t,\bar{z}^2_t+\Delta^2_t,\bar{\tilde{z}}^1_{(t,e)},\bar{\tilde{z}}^2_{(t,e)},u_t)-\psi_x(t);
	\end{aligned}
\end{equation*}
and for $l$, we denote
\begin{equation*}
	\begin{aligned}
		l(t)&\coloneqq l(t,\bar{\Xi}(t),\bar{u}_t),\quad l_{x}(t)\coloneqq\psi_x(t,\bar{\Xi}(t),\bar{u}_t),\\
		\delta l(t)&\coloneqq l(t,\bar{\Xi}(t),u_t)-l(t),\quad\delta l_{x}(t)\coloneqq l_x(t,\bar{\Xi}(t),u_t)-l_x(t),\\
		\delta l\big(t,\Delta^1,\Delta^2,\pi^1,\pi^2)&\coloneqq l\big(t,\bar{x}_t,\bar{y}_t,\bar{z}^1_t+\Delta^1_t,\bar{z}^2_t+\Delta^2_t,\bar{\tilde{z}}^1_{(t,e)},\bar{\tilde{z}}^2_{(t,e)},\\
        &\qquad \bar{\kappa}^1_t+\pi^1_t,\bar{\kappa}^2_t+\pi^2_t,\bar{\tilde{\kappa}}^1_{(t,e)},\bar{\tilde{\kappa}}^2_{(t,e)},u_t\big)-l(t),\\
		\delta l_x(t,\Delta^1,\Delta^2,\pi^1,\pi^2)&\coloneqq l_{x}\big(t,\bar{x}_t,\bar{y}_t,\bar{z}^1_t+\Delta^1_t,\bar{z}^2_t+\Delta^2_t,\bar{\tilde{z}}^1_{(t,e)},\bar{\tilde{z}}^2_{(t,e)},\\
        &\qquad \bar{\kappa}^1_t+\pi^1_t,\bar{\kappa}^2_t+\pi^2_t,\bar{\tilde{\kappa}}^1_{(t,e)},\bar{\tilde{\kappa}}^2_{(t,e)},u_t\big)-l_x(t);
	\end{aligned}
\end{equation*}
and similar notations are used for derivatives of $y$, $z^i$, $\tilde{z}^i$, $\kappa^i$, $\tilde{\kappa}^i$. In the above, $\Delta^i$ and $\pi^i$ are $\mathbb{F}$-adapted processes to be determined later. Set
\begin{equation*}
\begin{aligned}
	&\hat{x}^{1,\epsilon}\coloneqq x^\epsilon-\bar{x},\quad\hat{y}^{1,\epsilon}\coloneqq y^\epsilon-\bar{y},\quad\hat{z}^{i,1,\epsilon}\coloneqq z^{i,\epsilon}-\bar{z}^i,\\
    &\hat{\tilde{z}}^{i,1,\epsilon}\coloneqq \tilde{z}^{i,\epsilon}-\bar{\tilde{z}}^i,\quad\hat{\kappa}^{i,1,\epsilon}\coloneqq \kappa^{i,\epsilon}-\bar{\kappa}^i,\quad
    \hat{\tilde{\kappa}}^{i,1,\epsilon}\coloneqq \tilde{\kappa}^{i,\epsilon}-\bar{\tilde{\kappa}},
\end{aligned}
\end{equation*}
which satisfy the following equation:
\begin{equation}\label{equation of hat x y zeta 1}
	\left\{
	\begin{aligned}
		d\hat{x}^{1,\epsilon}_t&=\Bigl\{\tilde{b}_x(t)\hat{x}^{1,\epsilon}_t+\tilde{b}_y(t)\hat{y}^{1,\epsilon}_t+\tilde{b}_{z^i}(t)\hat{z}^{i,1,\epsilon}_t
        +\tilde{b}_{\tilde{z}^i}(t)\int_{E_i}\hat{\tilde{z}}^{i,1,\epsilon}_{(t,e)}\nu_{i}(de)+\delta b(t)\mathbbm{1}_{[\bar{t},\bar{t}+\epsilon]}(t)\Bigr\}dt\\
        &\quad+\Big\{\tilde{\sigma}_{ix}(t)\hat{x}^{1,\epsilon}_t+\tilde{\sigma}_{iy}(t)\hat{y}^{1,\epsilon}_t+\delta \sigma_{i}(t)\mathbbm{1}_{[\bar{t},\bar{t}+\epsilon]}(t)\Big\}dW^i_t\\
		&\quad+\int_{E_i}\Big\{\tilde{f}_{ix}(t)\hat{x}^{1,\epsilon}_t+\tilde{f}_{iy}(t)\hat{y}^{1,\epsilon}_t+\delta f_{i}(t)\mathbbm{1}_{\mathcal{O}}(t)\Big\}\tilde{N}_i(de,dt),\\
        -d\hat{y}^{1,\epsilon}_t&=\Bigl\{\tilde{g}_x(t)\hat{x}^{1,\epsilon}_t+\tilde{g}_y(t)\hat{y}^{1,\epsilon}_t+\tilde{g}_{z^i}(t)\hat{z}^{i,1,\epsilon}_t
        +\tilde{g}_{\tilde{z}^i}(t)\int_{E_i}\hat{\tilde{z}}^{i,1,\epsilon}_{(t,e)}\nu_{i}(de)+\delta g(t)\mathbbm{1}_{[\bar{t},\bar{t}+\epsilon]}(t)\Bigr\}dt\\
        &\quad-\hat{z}^{i,1,\epsilon}_tdW^i_t-\int_{E_i}\hat{\tilde{z}}^{i,1,\epsilon}_{(t,e)}\tilde{N}_i(de,dt),\\
		-d\hat{\zeta}^{1,\epsilon}_t&=\Bigl\{\tilde{l}_x(t)\hat{x}^{1,\epsilon}_t+\tilde{l}_y(t)\hat{y}^{1,\epsilon}_t+\tilde{l}_{z^i}(t)\hat{z}^{i,1,\epsilon}_t
        +\tilde{l}_{\tilde{z}^i}(t)\int_{E_i}\hat{\tilde{z}}^{i,1,\epsilon}_{(t,e)}\nu_{i}(de)+\tilde{l}_{\kappa^i}(t)\hat{\kappa}^{i,1,\epsilon}_t\\
		&\qquad+\tilde{l}_{\tilde{\kappa}^i}(t)\int_{E_i}\hat{\tilde{\kappa}}^{i,1,\epsilon}_{(t,e)}\nu_i(de)+\delta l(t)\mathbbm{1}_{[\bar{t},\bar{t}+\epsilon]}(t)\Bigr\}dt\\
        &\quad-\hat{\kappa}^{i,1,\epsilon}_tdW^i_t-\int_{E_i}\hat{\tilde{\kappa}}^{i,1,\epsilon}_{(t,e)}\tilde{N}_i(de,dt),\quad t\in[0,T],\\
		\hat{x}^{1,\epsilon}_0&=0,\quad\hat{y}^{1,\epsilon}_T=\tilde{\phi}_x(T)\hat{x}^{1,\epsilon}_T,\quad\hat{\zeta}^{1,\epsilon}_T=\tilde{\varphi}_x(T,0)\hat{x}^{1,\epsilon}_T+\tilde{\varphi}_y(T,0)\hat{y}^{1,\epsilon}_0,
	\end{aligned}
	\right.
\end{equation}
where
\begin{equation*}
	\begin{aligned}
		\tilde{l}_x(t)&\coloneqq\int_0^1l_x\Big(t,\bar{x}_t+\theta\hat{x}^{1,\epsilon}_t,\bar{y}_t+\theta\hat{y}^{1,\epsilon}_t,\bar{z}_t+\theta\hat{z}^{1,\epsilon}_t,\int_{E}(\bar{\tilde{z}}_{(t,e)}
        +\theta\hat{\tilde{z}}^{1,\epsilon}_{(t,e)})\nu(de),\\
		&\qquad\qquad\bar{\kappa}_t+\theta\hat{\kappa}^{1,\epsilon}_t,\int_{E}(\bar{\tilde{\kappa}}_{(t,e)}+\theta\hat{\tilde{\kappa}}^{1,\epsilon}_{(t,e)})\nu(de),u^{\epsilon}_t\Big)d\theta,\\
		\tilde{\varphi}_x(T,0)&\coloneqq\int_0^1\varphi_x(\bar{x}_T+\theta\hat{x}^{1,\epsilon}_T,\bar{y}_0+\theta\hat{y}^{1,\epsilon}_0)d\theta,\quad
		\tilde{\varphi}_y(T,0)\coloneqq\int_0^1\varphi_y(\bar{x}_T+\theta\hat{x}^{1,\epsilon}_T,\bar{y}_0+\theta\hat{y}^{1,\epsilon}_0)d\theta,
	\end{aligned}
\end{equation*}
and similarly for $b$, $\sigma_i$, $f_i$, $g$, $\phi$ and their derivatives with respect to $y$, $z^i$, $\tilde{z}^i$, $\kappa^i$, $\tilde{\kappa}^i$.

The following result is from Lemma 3.1 of Wang et al. \cite{WSS2024}.

\begin{lemma}\label{lemma of hat x 1}
	Let Assumption \ref{assumption A1 in Zheng-Shi} hold. Then for $p\geqslant2$, we obtain that
\begin{equation}
		\mathbb{E}\bigg[\sup_{0\leqslant t\leqslant T}\Big(|\hat{x}^{1,\epsilon}_t|^p+|\hat{y}^{1,\epsilon}_t|^p\Big)+\Big(\int_0^T\int_{E_i}|\hat{z}^{i,1,\epsilon}_t|^2dt\Big)^{\frac{p}{2}}
        +\Big(\int_0^T\int_{E_i}|\hat{\tilde{z}}^{i,1,\epsilon}_{(t,e)}|^2N_i(de,dt)\Big)^{\frac{p}{2}}\bigg]=O(\epsilon^{\frac{p}{2}}).
\end{equation}
\end{lemma}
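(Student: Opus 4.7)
The plan is to view \eqref{equation of hat x y zeta 1}, restricted to the first two equations and their natural martingale components, as a fully coupled linear FBSDEP for $(\hat{x}^{1,\epsilon},\hat{y}^{1,\epsilon},\hat{z}^{i,1,\epsilon},\hat{\tilde{z}}^{i,1,\epsilon})$ with bounded coefficients and free terms supported on $[\bar t,\bar t+\epsilon]$. The coefficient processes $\tilde b_x,\tilde b_y,\tilde b_{z^i},\tilde b_{\tilde z^i}$, $\tilde\sigma_{ix},\tilde\sigma_{iy}$, $\tilde f_{ix},\tilde f_{iy}$, $\tilde g_x,\tilde g_y,\tilde g_{z^i},\tilde g_{\tilde z^i}$ and the terminal factor $\tilde\phi_x(T)$ are all uniformly bounded thanks to Assumption \ref{assumption A1 in Zheng-Shi} (boundedness of first derivatives of $b,g,\phi$ and Lipschitz structure of $\sigma_i,f_i$ in $(x,y)$), so the standard Lipschitz/measurability hypotheses required in Proposition \ref{theorem solvability xyz} hold. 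The free terms are the four spike perturbations $\delta b(t)\mathbbm{1}_{[\bar t,\bar t+\epsilon]}(t)$, $\delta\sigma_i(t)\mathbbm{1}_{[\bar t,\bar t+\epsilon]}(t)$, $\delta f_i(t,e)\mathbbm{1}_{\mathcal O}(t)$ and $\delta g(t)\mathbbm{1}_{[\bar t,\bar t+\epsilon]}(t)$.

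Next I would apply the $L^p$-estimate \eqref{Lp estimate xy} of Proposition \ref{theorem solvability xyz} to this linear FBSDEP. Since the coefficients are bounded, the lifespan condition is automatically met on $[0,T]$ by a standard continuation argument, and the estimate yields
\begin{equation*}
\begin{aligned}
&\mathbb{E}\bigg[\sup_{0\leqslant t\leqslant T}\bigl(|\hat{x}^{1,\epsilon}_t|^p+|\hat{y}^{1,\epsilon}_t|^p\bigr)+\Bigl(\int_0^T|\hat{z}^{i,1,\epsilon}_t|^2dt\Bigr)^{\frac{p}{2}}+\Bigl(\int_0^T\!\!\int_{E_i}|\hat{\tilde{z}}^{i,1,\epsilon}_{(t,e)}|^2N_i(de,dt)\Bigr)^{\frac{p}{2}}\bigg]\\
&\leqslant C\,\mathbb{E}\bigg[\Bigl(\int_{\bar t}^{\bar t+\epsilon}|\delta b(t)|dt\Bigr)^{p}+\Bigl(\int_{\bar t}^{\bar t+\epsilon}|\delta g(t)|dt\Bigr)^{p}+\sum_i\Bigl(\int_{\bar t}^{\bar t+\epsilon}|\delta\sigma_i(t)|^2dt\Bigr)^{\frac{p}{2}}\\
&\qquad\qquad +\sum_i\Bigl(\int_{\bar t}^{\bar t+\epsilon}\!\!\int_{E_i}|\delta f_i(t,e)|^2 N_i(de,dt)\Bigr)^{\frac{p}{2}}\bigg].
\end{aligned}
\end{equation*}

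The third step is to bound each right-hand side piece by $O(\epsilon^{p/2})$. For the drift terms, since $b,g$ are bounded (Assumption \ref{assumption A1 in Zheng-Shi}(5)), $|\delta b(t)|+|\delta g(t)|\leqslant C$, so by H\"older $\mathbb{E}(\int_{\bar t}^{\bar t+\epsilon}|\delta b|dt)^p+\mathbb{E}(\int_{\bar t}^{\bar t+\epsilon}|\delta g|dt)^p=O(\epsilon^{p})$, which is $o(\epsilon^{p/2})$. For the diffusion spike, use $|\delta\sigma_i(t)|^2\leqslant C(1+|\bar x_t|^2+|\bar y_t|^2+|u_t|^2+|\bar u_t|^2)$ together with H\"older on the time integral, $\bigl(\int_{\bar t}^{\bar t+\epsilon}|\delta\sigma_i|^2dt\bigr)^{p/2}\leqslant\epsilon^{\frac{p}{2}-1}\int_{\bar t}^{\bar t+\epsilon}|\delta\sigma_i|^p dt$; integrating and using $\sup_t\mathbb{E}|u_t|^p+\sup_t\bar{\mathbb E}|\bar u_t|^p<\infty$ from the definition \eqref{ad_control_set} of $\mathcal U_{ad}$, together with $\sup_t\mathbb{E}|\bar x_t|^p+\sup_t\mathbb{E}|\bar y_t|^p<\infty$ from Proposition \ref{theorem solvability xyz} applied to $\bar\Theta$, gives $O(\epsilon^{p/2})$. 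For the jump piece, replace $N_i(de,dt)$ by its compensator $\nu_i(de)dt$ up to a BDG-controlled correction, then the same H\"older/moment argument (with the $L^p(\nu_i)$-integrability of $f_i$ via $\rho$) yields $O(\epsilon^{p/2})$.

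The only mildly delicate point is the jump term on $\mathcal O$ rather than $[\bar t,\bar t+\epsilon]$: this is harmless because $\mathcal O\subset[\bar t,\bar t+\epsilon]$, so any estimate of the latter bounds the former. Apart from that, no obstacle arises: the proof is essentially a direct specialization of \cite{WSS2024}'s Lemma 3.1, combining Proposition \ref{theorem solvability xyz} with H\"older and BDG on small-interval source terms, and the quadratic-exponential structure plays no role at this first-order level (the $\zeta$-equation is decoupled from the FBSDEP for $(\hat x,\hat y,\hat z,\hat{\tilde z})$).
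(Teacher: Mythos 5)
Your overall strategy is the right one and is essentially what the paper does by citation: the paper gives no proof of this lemma, invoking Lemma 3.1 of \cite{WSS2024}, and that result is indeed obtained by reading \eqref{equation of hat x y zeta 1} as a linear fully coupled FBSDEP with bounded coefficients and spike source terms and applying the $L^p$-estimate \eqref{Lp estimate xy}. Your treatment of the drift and Brownian spike terms is correct, and the Brownian term $\mathbb{E}\big(\int_{\bar t}^{\bar t+\epsilon}|\delta\sigma_i|^2dt\big)^{p/2}=O(\epsilon^{p/2})$ is the one that dictates the final order.

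There is, however, a genuine gap in your treatment of the jump source. Your closing remark that the restriction to $\mathcal{O}$ rather than $[\bar t,\bar t+\epsilon]$ is ``harmless because $\mathcal{O}\subset[\bar t,\bar t+\epsilon]$'' has the logic exactly backwards: enlarging $\mathcal{O}$ to $[\bar t,\bar t+\epsilon]$ destroys the order you need. For $p>2$ the quantity $\mathbb{E}\big(\int_{\bar t}^{\bar t+\epsilon}\int_{E_i}|\delta f_i(t,e)|^2N_i(de,dt)\big)^{p/2}$ is \emph{not} $O(\epsilon^{p/2})$; since $\nu_i$ is finite, the event of at least one jump in $[\bar t,\bar t+\epsilon]$ has probability of order $\epsilon$ and on that event the integral is of order one, so this expectation is only $O(\epsilon)$, which dominates $\epsilon^{p/2}$. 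Your proposed fix of ``replacing $N_i$ by its compensator up to a BDG-controlled correction'' fails for the same reason: the BDG correction produces the term $\mathbb{E}\int_{\bar t}^{\bar t+\epsilon}\int_{E_i}|\delta f_i|^pN_i(de,dt)=O(\epsilon)$. The correct argument is the one the progressive spike variation of \cite{STW20} is designed to enable: $N_i(de,dt)$ is carried by the union of the graphs $\llbracket T_n\rrbracket$, which $\mathcal{O}$ excludes by construction, so $\int_0^T\int_{E_i}|\delta f_i(t,e)|^2\mathbbm{1}_{\mathcal{O}}(t)N_i(de,dt)=0$ identically; equivalently, when you plug the source $\delta f_i\mathbbm{1}_{\mathcal{O}}$ into the $N_i$-slot of \eqref{Lp estimate xy} the corresponding term vanishes. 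The jump perturbation then enters only through its dual predictable projection, i.e.\ as a drift $\int_{\bar t}^{\bar t+\epsilon}\int_{E_i}\mathbf{E}_i[\delta f_i\,|\,\mathcal{P}\otimes\mathcal{B}(E_i)]\nu_i(de)dt$, whose $p$-th moment is $O(\epsilon^p)$ and hence negligible. With this correction (and noting that boundedness of $g$ itself is not assumed, only its linear growth, which suffices for the drift estimate), your proof goes through.
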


\subsection{Stochastic Lipschitz BSDEPs}

Due to the quadratic-exponential feature, some BSDEPs with stochastic Lipschitz coefficients appear in variational equations of $(\zeta,\kappa^1,\kappa^2,\tilde{\kappa}^1,\tilde{\kappa}^2)$. We need the result of the existence and the uniqueness of solutions to this kind equations and their estimates. Consider the following BSDEP:
\begin{equation}\label{equation stochastic Lip BSDEP}
	\left\{
	\begin{aligned}
		-dY_t&=\gamma\big(t,Y_t,Z_t,\tilde{Z}_{(t,e)}\big)dt-Z_tdW_t-\int_{E}\tilde{Z}_{(t,e)}\tilde{N}(de,dt),\quad t\in[0,T],\\
		Y_T&=\xi,
	\end{aligned}
	\right.
\end{equation}
where $\xi:\Omega\rightarrow\mathbb{R}$, $\gamma:\Omega\times[0,T]\times\mathbb{R}\times\mathbb{R}\times\mathbb{L}^2(E,\nu;\mathbb{R})\rightarrow\mathbb{R}$.
\begin{assumption}\label{assumption stochastic Lip BSDEP}
	The map $(\omega,t)\to\gamma(\omega,t,\cdot,\cdot,\cdot)$ is $\mathbb{F}$-progressively measurable.
\begin{enumerate}[{\bfseries (1)}]
	\item There exist a positive constant $K$ and a positive $\mathbb{F}$-progressively measurable process $H\in\mathbb{H}^2_{BMO}$ such that, for
          every $(y,z,\tilde{z}),(y',z',\tilde{z}')\in\mathbb{R}\times\mathbb{R}^d\times\mathbb{L}^2(E,\nu;\mathbb{R}^k)$,
		  \begin{equation*}
          \begin{aligned}
			 &|\gamma(\omega,t,y,z,\tilde{z})-\gamma(\omega,t,y',z',\tilde{z}')|\\
             &\leqslant K\big(|y-y'|+\Vert \tilde{z}-\tilde{z}'\Vert_{\mathbb{L}^2(\nu)}\big)+H_t(\omega)|z-z'|,\quad a.e.\,t\in[0,T],\ \mathbb{P}\mbox{-}a.s..
		  \end{aligned}
          \end{equation*}
	\item $\xi$ is $\mathcal{F}_T$-measurable and for $\forall p\geqslant2$, $\mathbb{E}\Big[|\xi|^p+\Big(\int_0^T|\gamma(t,0,0,0)|dt\Big)^p\Big]<\infty$.
\end{enumerate}
\end{assumption}

The following result is of \cite{FT18}, Theorem A.1.
\begin{lemma}\label{lemma stochastic Lip BSDEP}
	Under Assumption \ref{assumption stochastic Lip BSDEP}, there exists a unique solution $(Y,Z,\tilde{Z})$ to the BSDEP \eqref{equation stochastic Lip BSDEP}. Moreover, for all $p\geqslant2$,
\begin{equation*}
	\begin{aligned}
	&\mathbb{E}\bigg[\sup_{0\leqslant t\leqslant T}|Y_t|^p+\Big(\int_0^T|Z_t|^2dt\Big)^{\frac{p}{2}}+\Big(\int_0^T\int_{E}|\tilde{Z}_{(t,e)}|^2\nu(de)dt\Big)^{\frac{p}{2}}\bigg]\\
    &\leqslant C\bigg(\mathbb{E}\bigg[|\xi|^{p\bar{q}^2}+\Big(\int_0^T\gamma(0)dt\Big)^{p\bar{q}^2}\bigg]\bigg)^{\frac{1}{\bar{q}^2}},
	\end{aligned}
\end{equation*}
with a positive constant $\bar{q}$ satisfying $q_*\leqslant\bar{q}<\infty$ whose lower bound $q_*>1$ is controlled only by $\Vert H\Vert_{\mathbb{H}^2_{BMO}}$, and some positive constant $C$ depending only on $\big(p,\bar{q},T,K,\Vert H\Vert_{\mathbb{H}^2_{BMO}}\big)$.
\end{lemma}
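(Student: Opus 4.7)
The plan is to prove existence/uniqueness and the $L^p$-estimate separately, with the BMO-martingale machinery invoked throughout to control the stochastic Lipschitz coefficient $H$. For existence and uniqueness, I would linearize the equation. Taking two candidate solutions $(Y^1,Z^1,\tilde{Z}^1)$ and $(Y^2,Z^2,\tilde{Z}^2)$, the difference $(\delta Y,\delta Z,\delta\tilde{Z})$ satisfies a linear BSDEP whose driver can be written as $A_t\delta Y_t+B_t\delta Z_t+\int_E C_{(t,e)}\delta\tilde{Z}_{(t,e)}\nu(de)$ with $|A|,|C|\leqslant K$ and $|B|\leqslant H_t$. Since $B\in\mathbb{H}^2_{BMO}$ and $C$ is bounded, the Girsanov transformation $d\mathbb{Q}/d\mathbb{P}=\mathcal{E}(\int_0^{\cdot}B_sdW_s+\int_0^{\cdot}\!\int_E C\,\tilde{N}(de,ds))_T$ is a true martingale (reverse H\"older). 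Under $\mathbb{Q}$, the equation becomes a BSDEP with bounded coefficients, so standard theory gives $\delta Y=\delta Z=\delta\tilde{Z}=0$. Existence follows from a Picard iteration in $\mathcal{S}^2\times\mathbb{H}^2\times\mathbb{J}^2$ with an exponential weight involving $\int_0^{\cdot}H_s^2 ds$, with contraction verified using the same Girsanov device.

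For the $L^p$-estimate, I would proceed in two rounds, which is what produces the $\bar{q}^2$ in the exponent. First, apply It\^o's formula to $|Y_t|^p$. The Lipschitz bound gives $|\gamma(s,Y_s,Z_s,\tilde{Z}_{(s,e)})|\leqslant|\gamma(s,0,0,0)|+K|Y_s|+H_s|Z_s|+K\|\tilde{Z}_{(s,\cdot)}\|_{\mathbb{L}^2(\nu)}$. The dangerous term $p|Y|^{p-1}H|Z|$ is split by Young's inequality as $\tfrac{p(p-1)}{2}|Y|^{p-2}|Z|^2+\tfrac{p}{2(p-1)}|Y|^pH_s^2$; the first piece is absorbed by the quadratic variation term from It\^o, while the second is handled by a stochastic Gronwall-type argument to yield
\begin{equation*}
|Y_t|^p\leqslant C\,\mathbb{E}\Big[\big(|\xi|^p+(\smallint_t^T|\gamma(s,0,0,0)|ds)^p\big)\exp\big(C\smallint_t^TH_s^2ds\big)\,\Big|\,\mathcal{F}_t\Big].
\end{equation*}
A single application of H\"older's inequality with conjugate exponents $(\bar{p},\bar{q})$, combined with the reverse H\"older inequality for the BMO exponential $\mathbb{E}[\exp(C\bar{p}\int_0^T H_s^2 ds)]<\infty$ (valid for $\bar{q}$ in a suitable range controlled by $\|H\|_{\mathbb{H}^2_{BMO}}$), then produces the bound $\mathbb{E}[\sup_t|Y_t|^p]\leqslant C(\mathbb{E}[|\xi|^{p\bar{q}}+(\int_0^T|\gamma(0)|ds)^{p\bar{q}}])^{1/\bar{q}}$.

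For the $(Z,\tilde{Z})$ part, I would apply It\^o's formula to $|Y_t|^2$ and isolate $\int_0^T|Z_s|^2ds+\int_0^T\!\int_E|\tilde{Z}_{(s,e)}|^2\nu(de)ds$. After Young's inequality to absorb the cross term $|Y|H|Z|$ and BDG for the martingales, I obtain a bound of the schematic form $\mathbb{E}[(\int_0^T|Z|^2ds)^{p/2}+(\int_0^T\!\int|\tilde{Z}|^2\nu ds)^{p/2}]\leqslant C\,\mathbb{E}[|\xi|^p+(\int|\gamma(0)|)^p+\sup_t|Y_t|^{p}(\int_0^T H_s^2 ds)^{p/2}]$. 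Splitting the last product by a second H\"older with exponent $\bar{q}$ and controlling the $H$-factor via the energy inequality $\mathbb{E}[(\int_0^T H_s^2 ds)^n]\leqslant n!\,\|H\|_{\mathbb{H}^2_{BMO}}^{2n}$, then feeding in the already-proved $\sup$-bound for $Y$ at level $p\bar{q}$, raises the required moment of $\xi$ and $\int|\gamma(0)|$ to the exponent $p\bar{q}\cdot\bar{q}=p\bar{q}^2$, exactly matching the claim.

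The main obstacle is the unboundedness of $H$: none of the standard Lipschitz BSDEP estimates are available in the form $H_t|z-z'|$, and one cannot simply absorb $H|Z|$ by Young's inequality without paying a price in the form of $|Y|^pH_t^2$, whose exponential integral $\exp(C\int H^2 ds)$ is only controlled through BMO/reverse-H\"older theory. The iterative nature of the two-step estimation (first on $Y$, then on $(Z,\tilde Z)$) is what forces the double factor $\bar{q}^2$ and makes the constants depend only on $(p,\bar{q},T,K,\|H\|_{\mathbb{H}^2_{BMO}})$ as asserted.
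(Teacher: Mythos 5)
First, note that the paper does not prove this lemma at all: it is quoted verbatim as Theorem A.1 of Fujii--Takahashi \cite{FT18} (the jump analogue of Briand--Confortola \cite{BC08}), so the relevant comparison is with that standard argument. Your overall architecture is the right one --- linearize, treat the $H_t|z-z'|$ term via the BMO machinery, estimate $Y$ first and then $(Z,\tilde Z)$, with each round costing one factor of $\bar q$ and hence producing the exponent $p\bar q^2$ --- and your uniqueness/existence step via Girsanov with $\mathcal{E}\big(\int_0^\cdot B_s\,dW_s+\int_0^\cdot\int_E C\,\tilde N(de,ds)\big)$ is exactly how the cited proof proceeds.

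However, your first-round estimate for $\mathbb{E}[\sup_t|Y_t|^p]$ contains a genuine gap. After Young's inequality you are left with the weight $\exp\big(C\int_t^T H_s^2\,ds\big)$ inside a conditional expectation, and you then assert $\mathbb{E}\big[\exp\big(C\bar p\int_0^T H_s^2\,ds\big)\big]<\infty$ for $\bar q$ ``in a suitable range.'' This is not the reverse H\"older inequality (which concerns the stochastic exponential $\mathcal{E}(M)$, not $\exp(\langle M\rangle)$); it is the John--Nirenberg inequality, and it holds only under the smallness condition $C\bar p\,\Vert H\Vert^2_{\mathbb{H}^2_{BMO}}<1$. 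Equivalently, by the energy inequality, $\mathbb{E}[\exp(\lambda\int_0^T H_s^2\,ds)]\leqslant\sum_n\lambda^n\Vert H\Vert^{2n}_{\mathbb{H}^2_{BMO}}$, which diverges once $\lambda\Vert H\Vert^2_{\mathbb{H}^2_{BMO}}\geqslant1$. Since $C$ is a fixed constant determined by $p$ and $\bar p>1$, no choice of $(\bar p,\bar q)$ rescues the argument when $\Vert H\Vert_{\mathbb{H}^2_{BMO}}$ is large, whereas the lemma makes no smallness assumption. The correct route --- and the one used in \cite{FT18,BC08} --- is to apply the \emph{same} Girsanov device you already invoke for uniqueness to the $Y$-estimate: write the driver as $\gamma(s,Y_s,0,\tilde Z_s)+B_sZ_s$ with $|B_s|\leqslant H_s$, pass to $\mathbb{Q}$ with density $\mathcal{E}(\int B\,dW)$ where the equation is Lipschitz, and convert back via H\"older combined with the reverse H\"older inequality for $\mathcal{E}(\int B\,dW)$, which holds for \emph{some} exponent $p_*>1$ (with conjugate $q_*$) no matter how large $\Vert H\Vert_{\mathbb{H}^2_{BMO}}$ is; only the admissible exponent degrades. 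With that replacement your second round, which feeds the $Y$-estimate at level $p\bar q$ into the $(Z,\tilde Z)$-estimate and uses the energy inequality for the $(\int_0^T H_s^2\,ds)^{p/2}$ factor, goes through as you describe.
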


\subsection{First- and second-order variational equations}

With the help of Lemma \ref{lemma stochastic Lip BSDEP}, we can deduce the following estimate whose proof is in the Appendix.
\begin{lemma}\label{lemma of hat zeta 1}
	Under some Assumption \ref{assumption A1 in Zheng-Shi} and \ref{assumption of l(t)}, for each $p>1$,
	\begin{equation}
		\mathbb{E}\bigg[\sup_{0\leqslant t\leqslant T}|\hat{\zeta}^{1,\epsilon}_t|^p+\Big(\int^T_0|\hat{\kappa}^{i,1,\epsilon}_t|^2dt\Big)^{\frac{p}{2}}
        +\Big(\int^T_0\int_{E_i}|\hat{\tilde{\kappa}}^{i,1,\epsilon}_{(t,e)}|^2\nu_i(de)dt\Big)^{\frac{p}{2}}\bigg]=O(\epsilon^{\frac{p}{2}}).
	\end{equation}
\end{lemma}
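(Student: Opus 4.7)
The plan is to recognise equation \eqref{equation of hat x y zeta 1} for $(\hat{\zeta}^{1,\epsilon},\hat{\kappa}^{i,1,\epsilon},\hat{\tilde{\kappa}}^{i,1,\epsilon})$ as a linear BSDEP of the stochastic-Lipschitz form \eqref{equation stochastic Lip BSDEP}, then apply Lemma \ref{lemma stochastic Lip BSDEP} and control the terminal data and free driver by Lemma \ref{lemma of hat x 1} together with the BMO-regularity coming from Proposition \ref{theorem solvability zeta}. Rewrite the third equation of \eqref{equation of hat x y zeta 1} with unknowns $(Y,Z^i,\tilde{Z}^i)=(\hat{\zeta}^{1,\epsilon},\hat{\kappa}^{i,1,\epsilon},\hat{\tilde{\kappa}}^{i,1,\epsilon})$ and generator
\begin{equation*}
\gamma_\epsilon(t,Y,Z^1,Z^2,\tilde{Z}^1,\tilde{Z}^2)=\tilde{l}_{\kappa^i}(t)Z^i+\tilde{l}_{\tilde{\kappa}^i}(t)\int_{E_i}\tilde{Z}^i_{(t,e)}\nu_i(de)+F^\epsilon_t,
\end{equation*}
where $F^\epsilon_t$ gathers the inhomogeneous terms driven by $\hat{x}^{1,\epsilon},\hat{y}^{1,\epsilon},\hat{z}^{i,1,\epsilon},\hat{\tilde{z}}^{i,1,\epsilon}$ together with the spike term $\delta l(t)\mathbbm{1}_{[\bar{t},\bar{t}+\epsilon]}(t)$, and with terminal datum $\xi_\epsilon=\tilde{\varphi}_x(T,0)\hat{x}^{1,\epsilon}_T+\tilde{\varphi}_y(T,0)\hat{y}^{1,\epsilon}_0$.

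The first step is the verification of the stochastic-Lipschitz structure uniformly in $\epsilon$. Assumption \ref{assumption of l(t)} gives $|\tilde{l}_{\kappa^i}(t)|\leqslant L_3(1+|\bar{\kappa}^i_t|+|\kappa^{i,\epsilon}_t|)$ and $|\tilde{l}_{\tilde{\kappa}^i}(t)|\leqslant L_4(1+\Vert e^{\bar{\tilde{\kappa}}^i+\theta\hat{\tilde{\kappa}}^{i,1,\epsilon}}\Vert)$. Proposition \ref{theorem solvability zeta} applied to the optimal and perturbed Q$_{exp}$BSDEPs yields $\bar{\kappa}^i,\kappa^{i,\epsilon}\in\mathbb{H}^2_{BMO}$ and $\bar{\tilde{\kappa}}^i,\tilde{\kappa}^{i,\epsilon}\in\mathcal{S}^\infty$ with norms controlled only by the Q$_{exp}$-structure constants $(\alpha,\beta,\theta)$ of $l$ and by $\Vert\varphi(x_T,y_0)\Vert_\infty$, neither of which depends on the admissible control in force. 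Hence $H^\epsilon_t:=L_3(1+|\bar{\kappa}^i_t|+|\kappa^{i,\epsilon}_t|)$ has $\mathbb{H}^2_{BMO}$-norm bounded uniformly in $\epsilon$, while the Lipschitz constant in $\tilde{Z}$ is a deterministic constant independent of $\epsilon$. Therefore Assumption \ref{assumption stochastic Lip BSDEP} is met with $\epsilon$-uniform constants, and Lemma \ref{lemma stochastic Lip BSDEP} delivers, for some $\bar{q}\geqslant q_*$ and constant $C$ independent of $\epsilon$,
\begin{equation*}
\mathbb{E}\bigg[\sup_t|\hat{\zeta}^{1,\epsilon}_t|^p+\Big(\int_0^T|\hat{\kappa}^{i,1,\epsilon}_t|^2dt\Big)^{p/2}+\Big(\int_0^T\int_{E_i}|\hat{\tilde{\kappa}}^{i,1,\epsilon}_{(t,e)}|^2\nu_i(de)dt\Big)^{p/2}\bigg]\leqslant C\Big(\mathbb{E}\big[|\xi_\epsilon|^{p\bar{q}^2}+(\textstyle\int_0^T|F^\epsilon_t|dt)^{p\bar{q}^2}\big]\Big)^{1/\bar{q}^2}.
\end{equation*}

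The second step is to show each factor on the right is $O(\epsilon^{p\bar{q}^2/2})$. For $\xi_\epsilon$ the linear growth of $\varphi_x,\varphi_y$ from Assumption \ref{assumption A2 in Zheng-Shi} combined with the moment bounds on $\bar{x},\bar{y}$ and Lemma \ref{lemma of hat x 1} gives, by Hölder, $\mathbb{E}|\xi_\epsilon|^{p\bar{q}^2}=O(\epsilon^{p\bar{q}^2/2})$. For $F^\epsilon_t$ the contribution of $\tilde{l}_x\hat{x}^{1,\epsilon}+\tilde{l}_y\hat{y}^{1,\epsilon}+\tilde{l}_{z^i}\hat{z}^{i,1,\epsilon}+\tilde{l}_{\tilde{z}^i}\int_{E_i}\hat{\tilde{z}}^{i,1,\epsilon}\nu_i(de)$ has bounded coefficients by Assumption \ref{assumption of l(t)}, so Cauchy--Schwarz in $t$ and Lemma \ref{lemma of hat x 1} yield $O(\epsilon^{1/2})$ in $L^{p\bar{q}^2}$. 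For the spike term, Assumption \ref{assumption of l(t)} bounds $|\delta l(t)|$ by $L_2(1+|\bar{x}_t|+|\bar{y}_t|+|\bar{z}^i_t|+\Vert\bar{\tilde{z}}^i\Vert+|\bar{\kappa}^i_t|+\Vert\bar{\tilde{\kappa}}^i\Vert)$, whence by Cauchy--Schwarz
\begin{equation*}
\int_{\bar{t}}^{\bar{t}+\epsilon}|\delta l(t)|dt\leqslant C\epsilon\big(1+\sup_t|\bar{x}_t|+\sup_t|\bar{y}_t|\big)+C\sqrt{\epsilon}\,\Big[\big(\textstyle\int_0^T|\bar{z}^i|^2dt\big)^{1/2}+\big(\int_0^T|\bar{\kappa}^i|^2dt\big)^{1/2}+\cdots\Big].
\end{equation*}
All the $L^{p\bar{q}^2}$-norms on the right are finite, those for $\bar{z},\bar{\tilde{z}}$ by Proposition \ref{theorem solvability xyz} and those for $\bar{\kappa},\bar{\tilde{\kappa}}$ by the energy inequality (Lemma \ref{energy inequality}) applied to the BMO-estimate of Proposition \ref{theorem solvability zeta}. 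Thus the spike term is $O(\sqrt{\epsilon})$ in $L^{p\bar{q}^2}$. Combining and taking the $1/\bar{q}^2$-power gives the announced $O(\epsilon^{p/2})$.

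I expect the main obstacle to be Step 1, namely the $\epsilon$-uniform BMO/essential-bound control of $(\kappa^{i,\epsilon},\tilde{\kappa}^{i,\epsilon})$. The constants in Proposition \ref{theorem solvability zeta} depend on the Q$_{exp}$-structure constants $(\alpha,\beta,\theta)$ of the generator and on $\Vert\varphi(x_T,y_0)\Vert_\infty$; tracing through the definition \eqref{defination of b l in state equation} of $l$ one sees that these structure constants arise from the exponential-of-integral construction and do not depend on the particular admissible control, so Proposition \ref{theorem solvability zeta} can indeed be invoked uniformly along the spike family. Once this uniformity is in hand, the remainder is a bookkeeping application of Lemma \ref{lemma stochastic Lip BSDEP}, Lemma \ref{lemma of hat x 1}, and Lemma \ref{energy inequality}.
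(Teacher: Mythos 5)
Your proposal is correct and follows essentially the same route as the paper's proof in Appendix A: linearize the $\zeta$-equation, apply Lemma \ref{lemma stochastic Lip BSDEP}, control the terminal datum and the non-spike driver terms via Lemma \ref{lemma of hat x 1}, and handle the spike term $\delta l(t)\mathbbm{1}_{[\bar{t},\bar{t}+\epsilon]}$ by Cauchy--Schwarz in time together with the energy inequality for the BMO-norms of $\bar{\kappa}^i,\bar{\tilde{\kappa}}^i$. Your Step 1 — the explicit $\epsilon$-uniform verification of the stochastic-Lipschitz condition via Proposition \ref{theorem solvability zeta} applied to both the optimal and perturbed Q$_{exp}$BSDEPs — is a point the paper leaves implicit, and making it explicit is a welcome addition rather than a deviation.
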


According to Lemma \ref{lemma of hat x 1} and Lemma \ref{lemma of hat zeta 1}, we could set
\begin{equation}\label{expansion guss}
\begin{aligned}
	&x^\epsilon_t-\bar{x}_t=x^1_t+x^2_t+o(\epsilon),\quad y^\epsilon_t-\bar{y}_t=y^1_t+y^2_t+o(\epsilon),\quad \zeta^{\epsilon}_t-\bar{\zeta}_t=\zeta^1_t+\zeta^2_t+o(\epsilon),\\
	&z^{i,\epsilon}_t-\bar{z}^i_t=z^{i,1}_t+z^{i,2}_t+o(\epsilon),\quad \tilde{z}^{i,\epsilon}_{(t,e)}-\bar{\tilde{z}}^i_{(t,e)}=\tilde{z}^{i,1}_{(t,e)}+\tilde{z}^{i,2}_{(t,e)}+o(\epsilon),\\
	&\kappa^{i,\epsilon}_t-\bar{\kappa}^i_t=\kappa^{i,1}_t+\kappa^{i,2}_t+o(\epsilon),\quad \tilde{\kappa}^{i,\epsilon}_{(t,e)}-\bar{\tilde{\kappa}}^i_{(t,e)}
     =\tilde{\kappa}^{i,1}_{(t,e)}+\tilde{\kappa}^{i,2}_{(t,e)}+o(\epsilon),\quad i=1,2,
\end{aligned}
\end{equation}
where $x^1,y^1,z^{i,1},\tilde{z}^{i,1},\zeta^1,\kappa^{i,1},\tilde{\kappa}^{i,1}\sim O(\sqrt{\epsilon})$ and $x^2,y^2,z^{i,2},\tilde{z}^{i,2},\zeta^2,\kappa^2,\tilde{\kappa}^{i,2}\sim O(\epsilon)$.

Further, inspired by \cite{HJX18,ZS23}, we introduce that $z^{i,1}_t$, $\kappa^{i,1}_t$ have the following forms, respectively:
\begin{equation*}
	z_t^{i,1}=\Delta^i(t)\mathbbm{1}_{[\bar{t},\bar{t}+\epsilon]}+z_t^{i,1'},\quad \kappa_t^{i,1}=\pi^i(t)\mathbbm{1}_{[\bar{t},\bar{t}+\epsilon]}+\kappa_t^{i,1'}, \quad i=1,2,
\end{equation*}
where $\Delta^i(\cdot),\pi^i(\cdot)$ are $\mathbb{F}$-adapted processes, to be determined later, and importantly, $z_t^{i,1'},\kappa_t^{i,1'},i=1,2$ have good estimates similarly as $x_t^1$. Then, for example, for $l$, we have the expansion:
\begin{equation*}
	\begin{aligned}
		&l\big(t,x^\epsilon_t,y^\epsilon_t,z^{1,\epsilon}_t,z^{2,\epsilon}_t,\tilde{z}^{1,\epsilon}_{(t,e)},\tilde{z}^{2,\epsilon}_{(t,e)},\kappa^{1,\epsilon}_t,
        \kappa^{2,\epsilon}_t,\tilde{\kappa}^{1,\epsilon}_{(t,e)},\tilde{\kappa}^{2,\epsilon}_{(t,e)},u^\epsilon_t\big)-l(t)\\
        &=l\big(t,\bar{x}_t+x_t^1+x_t^2,\bar{y}_t+y_t^1+y_t^2,\bar{z}^1_t+\Delta^1(t)\mathbbm{1}_{[\bar{t},\bar{t}+\epsilon]}+z_t^{1,1'}+z_t^{1,2},\\
        &\qquad \bar{z}^2_t+\Delta^2(t)\mathbbm{1}_{[\bar{t},\bar{t}+\epsilon]}+z_t^{2,1'}+z_t^{2,2},\bar{\tilde{z}}^1_{(t,e)}+\tilde{z}_{(t,e)}^{1,1}+\tilde{z}_{(t,e)}^{1,2},
        \bar{\tilde{z}}^2_{(t,e)}+\tilde{z}_{(t,e)}^{2,1}+\tilde{z}_{(t,e)}^{2,2},\\
        &\qquad\bar{\kappa}^1_t+\pi^1(t)\mathbbm{1}_{[\bar{t},\bar{t}+\epsilon]}+\kappa_t^{1,1'}+\kappa_t^{1,2},\bar{\kappa}^2_t+\pi^2(t)\mathbbm{1}_{[\bar{t},\bar{t}+\epsilon]}+\kappa_t^{2,1'}+\kappa_t^{2,2},\\
		&\qquad\bar{\tilde{\kappa}}^1_{(t,e)}+\tilde{\kappa}_{(t,e)}^{1,1}+\tilde{\kappa}_{(t,e)}^{1,2},\bar{\tilde{\kappa}}^2_{(t,e)}+\tilde{\kappa}_{(t,e)}^{2,1}+\tilde{\kappa}_{(t,e)}^{2,2},u^\epsilon_t\big)
        -l(t)+o(\epsilon)\\
		&=l_x(t)(x_t^1+x_t^2)+l_y(t)(y_t^1+y_t^2)+l_{z^i}(t)(z_t^{i,1'}+z_t^{i,2})+l_{\tilde{z}^i}(t)\int_{E_i}(\tilde{z}_{(t,e)}^{i,1}+\tilde{z}_{(t,e)}^{i,2})\nu_i(de)\\
		&\quad+l_{\kappa^i}(t)(\kappa_t^{i,1'}+\kappa_t^{i,2})+l_{\tilde{\kappa}^i}(t)\int_{E_i}(\tilde{\kappa}_{(t,e)}^{i,1}+\tilde{\kappa}_{(t,e)}^{i,2})\nu_i(de)+\frac{1}{2}\tilde{\Xi}(t)D^2l(t)\tilde{\Xi}(t)^\top\\
        &\quad +\delta l(t,\Delta^1,\Delta^2,\pi^1,\pi^2)\mathbbm{1}_{[\bar{t},\bar{t}+\epsilon]}+o(\epsilon),
	\end{aligned}
\end{equation*}
where
\begin{equation*}
\begin{aligned}
    \tilde{\Xi}&\coloneqq\bigg[x^1,y^1,z^{1,1'},z^{2,1'},\int_{E_1}\tilde{z}^{1,1}_{(t,e)}\nu_1(de),\int_{E_2}\tilde{z}^{2,1}_{(t,e)}\nu_2(de),\\
    &\qquad \kappa^{1,1'},\kappa^{2,1'},\int_{E_1}\tilde{\kappa}^{1,1}_{(t,e)}\nu_1(de),\int_{E_2}\tilde{\kappa}^{2,1}_{(t,e)}\nu_2(de)\bigg].
\end{aligned}
\end{equation*}

The first-order variation process 6-tuple $(x^1,y^1,z^{1,1},z^{2,1},\tilde{z}^{1,1},\tilde{z}^{2,1})$ satisfies:
\begin{equation}
	\left\{
	\begin{aligned}
		dx^1_t&=\Bigl\{b_x(t)x^1_t+b_y(t)y^1_t+b_{z^i}(t)(z^{i,1}_t-\Delta^i_t\mathbbm{1}_{[\bar{t},\bar{t}+\epsilon]}(t))+b_{\tilde{z}^1}(t)\int_{E_i}\tilde{z}^{i,1}_{(t,e)}\nu_i(de)\Bigr\}dt\\
		&\quad+\Bigl\{\sigma_{ix}(t)x^1_t+\sigma_{iy}(t)y^1_t+\delta \sigma_i(t)\mathbbm{1}_{[\bar{t},\bar{t}+\epsilon]}(t)\Bigr\}dW^i_t\\
		&\quad+\int_{E_i}\Bigl\{f_{ix}(t,e)x^1_{t-}+f_{iy}(t,e)y^1_{t-}\Bigr\}\tilde{N}_i(de,dt),\\
		-dy^1_t&=\Bigl\{g_x(t)x^1_t+g_y(t)y^1_t+g_{z^i}(t)(z^{i,1}_t-\Delta^i_t\mathbbm{1}_{[\bar{t},\bar{t}+\epsilon]}(t))+g_{\tilde{z}^i}(t)\int_{E_i}\tilde{z}^{i,1}_{(t,e)}\nu_i(de)\\
		&\qquad-n^i_t\delta\sigma_i(t)\mathbbm{1}_{[\bar{t},\bar{t}+\epsilon]}(t)\Bigr\}dt-z^{i,1}_tdW^i_t-\int_{E_i}\tilde{z}^{i,1}_{(t,e)}\tilde{N}_i(de,dt),\quad t\in[0,T],\\
		x^1_0&=0,\quad y^1_T=\phi_x(\bar{x}_T)x^1_T,
	\end{aligned}
	\right.
\end{equation}
where $(m,n^1,n^2,\tilde{n}^1,\tilde{n}^2)$ is the first-order decouple process quintuple given by
\begin{equation}\label{equation first order decouple m,n}
	\left\{
	\begin{aligned}
		-dm_t&=\biggl\{m_t\Big[b_x(t)+b_y(t)m_t+b_{z^i}(t)K_i(t)+b_{\tilde{z}^i}(t)\int_{E_i}\tilde{K}_i(t,e)\nu_i(de)\Big]\\
		&\qquad +n^i_t\big[\sigma_{ix}(t)+\sigma_{iy}(t)m_t\big]+\Big[g_x(t)+g_y(t)m_t+g_{z^i}(t)K_i(t)\\
        &\qquad +g_{\tilde{z}^i}(t)\int_{E_i}\tilde{K}_i(t,e)\nu_i(de)\Big]\\
		&\qquad +\int_{E_i}\tilde{n}^i_{(t,e)}\mathbf{E}_i\big[f_{ix}(t,e)+f_{iy}(t,e)m_{t-}|\mathcal{P}\otimes\mathcal{B}(E_i)\big]\nu_i(de)\biggr\}dt\\
        &\quad -n^i_tdW^i_t-\int_{E_i}\tilde{n}^i_{(t,e)}\tilde{N}_i(de,dt),\quad t\in[0,T],\\
		m_T&=\phi_x(\bar{x}_T).
	\end{aligned}
	\right.
\end{equation}

The following lemma is also from Wang et al. \cite{WSS2024}, Lemma 3.2.

\begin{lemma}\label{estimate first-order xyz}
	For first-order variation process 6-tuple $(x^1,y^1,z^{1,1},z^{2,1},\tilde{z}^{1,1},\tilde{z}^{2,1})$, we have the following relationships:
	\begin{equation}
		y^1_t=m_tx^1_t,\quad z^{i,1}_t=K_i(t)x^1_t+\Delta^i(t)\mathbbm{1}_{[\bar{t},\bar{t}+\epsilon]}(t),\quad \tilde{z}^{i,1}_{(t,e)}=\tilde{K}_i(t,e)x^1_{t-},
	\end{equation}
where $K_i(t)$, $\Delta^i(t)$ and $\tilde{K}_i(t,e)$ are given by
\begin{equation*}
	\left\{
	\begin{aligned}
		K_i(t)&\coloneqq m_t(\sigma_{ix}(t)+\sigma_{iy}(t)m_t)+n^i_t,\quad \Delta^i(t)\coloneqq m_t\delta\sigma_i(t),\\
		\tilde{K}_i(t,e)&\coloneqq m_{t-}(f_{ix}(t,e)+f_{iy}(t,e)m_{t-})+\tilde{n}^i_{(t,e)}+\tilde{n}^i_{(t,e)}(f_{ix}(t,e)+f_{iy}(t,e)m_{t-}).
	\end{aligned}
	\right.
\end{equation*}
Moreover, supposing that $(m,n^1,n^2,\tilde{n}^1,\tilde{n}^2)$ are bounded, then for $p\geqslant 2$, we have
	\begin{equation}
		\begin{aligned}
			&\mathbb{E}\bigg[\sup_{0\leqslant t\leqslant T}\big[|x^1_t|^p+|y^1_t|^p\big]+\Big(\int_0^T|z^{i,1}_t|^2dt\Big)^{\frac{p}{2}}
            +\Big(\int_0^T\int_{E_i}|\tilde{z}^{i,1}_{(t,e)}|^2N(de,dt)\Big)^{\frac{p}{2}}\bigg]=O(\epsilon^{\frac{p}{2}}),
		\end{aligned}
	\end{equation}	
	\begin{equation}
		\begin{aligned}
			&\mathbb{E}\bigg[\sup_{0\leqslant t\leqslant T}\Big(|x^{\epsilon}_t-\bar{x}_t-x^1_t|^2+|y^{\epsilon}_t-\bar{y}_t-y^1_t|^2\Big)+\int_0^T|z^{i,\epsilon}_t-\bar{z}^i_t-z^{i,1}_t|^2dt\\
			&\quad+\int_0^T\int_{E_i}|\tilde{z}^{i,\epsilon}_{(t,e)}-\bar{\tilde{z}}^i_{(t,e)}-\tilde{z}^{i,1}_{(t,e)}|^2N(de,dt)\bigg]=O(\epsilon^2),
		\end{aligned}
	\end{equation}	
	\begin{equation}
		\begin{aligned}
			&\mathbb{E}\bigg[\sup_{0\leqslant t\leqslant T}\Big(|x^{\epsilon}_t-\bar{x}_t-x^1_t|^4+|y^{\epsilon}_t-\bar{y}_t-y^1_t|^4\Big)+\Big(\int_0^T|z^{i,\epsilon}_t-\bar{z}^i_t-z^{i,1}_t|^2dt\Big)^2\\
			&\quad+\Big(\int_0^T\int_{E_i}|\tilde{z}^{i,\epsilon}_{(t,e)}-\bar{\tilde{z}}^i_{(t,e)}-\tilde{z}^{i,1}_{(t,e)}|^2N(de,dt)\Big)^2\bigg]=o(\epsilon^2).
		\end{aligned}
	\end{equation}
\end{lemma}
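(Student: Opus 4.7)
The plan has three phases, matching the three conclusions of the lemma.

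First, I would establish the decoupling relations $y^1_t = m_t x^1_t$, $z^{i,1}_t = K_i(t)x^1_t + \Delta^i(t)\mathbbm{1}_{[\bar t,\bar t+\epsilon]}(t)$ and $\tilde{z}^{i,1}_{(t,e)} = \tilde{K}_i(t,e)x^1_{t-}$. Apply It\^o's formula to $m_t x^1_t$, using the forward SDEP for $x^1$ and the adjoint BSDEP \eqref{equation first order decouple m,n} for $m$. After substituting the ansatz into the BSDE for $y^1$, the drift terms cancel by construction of the adjoint, the Brownian parts identify $z^{i,1}$ with $K_i x^1 + \Delta^i\mathbbm{1}_{[\bar t,\bar t+\epsilon]}$, and the compensated Poisson parts identify $\tilde{z}^{i,1}$ with $\tilde{K}_i x^1_{t-}$; the terminal condition $m_T x^1_T = \phi_x(\bar x_T) x^1_T = y^1_T$ also matches. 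Uniqueness for linear BSDEPs closes the argument, and the formulas for $K_i$, $\Delta^i$, $\tilde{K}_i$ are read off directly.

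Second, I would obtain the $O(\epsilon^{p/2})$ bound. With the decoupling in hand, the equation for $x^1$ becomes a linear SDEP with bounded coefficients (by Assumption \ref{assumption A1 in Zheng-Shi} and the assumed boundedness of $(m,n^i,\tilde n^i)$), perturbed only by the spike $\delta\sigma_i(t)\mathbbm{1}_{[\bar t,\bar t+\epsilon]}(t)$ in the diffusion. A Burkholder--Davis--Gundy plus Gronwall argument gives $\mathbb{E}\sup_t|x^1_t|^p = O(\epsilon^{p/2})$, the spike producing a time integral of length $\epsilon$ whose $p$-th moment is $O(\epsilon^{p/2})$. The estimate for $y^1$ follows from $y^1 = m x^1$, and the estimates for the stochastic integrals of $z^{i,1}$ and $\tilde z^{i,1}$ follow from the explicit formulas, the boundedness of $K_i, \tilde K_i$, and the $L^p$-control of $\delta\sigma_i$.

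Third, for the two remainder estimates, set $R^x := x^\epsilon - \bar x - x^1$ and analogously for $y, z^i, \tilde z^i$. Taylor-expanding $b,\sigma_i,f_i,g,\phi$ around $(\bar x,\bar y,\bar z^i,\bar{\tilde z}^i)$ and subtracting the equation for $x^1$ from that for $\hat x^{1,\epsilon}$, one obtains a linear coupled FBSDEP for $(R^x,R^y,R^{z^i},R^{\tilde z^i})$ whose driver consists of (i)~quadratic remainders in the first-order variations $(\hat x^{1,\epsilon},\hat y^{1,\epsilon})$, and (ii)~spike terms of the form $[\delta\sigma_{ix}(t)-\sigma_{ix}(t)]\hat x^{1,\epsilon}\mathbbm{1}_{[\bar t,\bar t+\epsilon]}$ plus similar jump contributions. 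Lemma \ref{lemma of hat x 1} gives $\hat x^{1,\epsilon}, \hat y^{1,\epsilon} = O(\epsilon^{1/2})$ in $L^p$, so the quadratic contributions to the source have $L^2$-norm $O(\epsilon)$; combined with the $L^p$-theory of fully coupled FBSDEPs (Proposition \ref{theorem solvability xyz}), this yields the $O(\epsilon^2)$ bound. To upgrade the fourth-moment estimate to $o(\epsilon^2)$, I would split off the spike-times-first-order contribution and argue that the integrands $[\delta\sigma_{ix}(t)-\sigma_{ix}(t)]$ (and their jump counterparts) tend to $0$ in the appropriate $L^p$ norm as $\epsilon \to 0$ because the perturbation occurs on a set of measure $\epsilon$; dominated convergence then turns the corresponding $O(\epsilon^2)$ term into $o(\epsilon^2)$.

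The main obstacle is exactly this last step: upgrading the $L^4$ remainder bound from $O(\epsilon^2)$ to $o(\epsilon^2)$ requires isolating the spike contribution from the purely quadratic one, and the BDG inequality for the quantity $\bigl(\int_0^T\int_{E_i}|\cdot|^2 N_i(de,dt)\bigr)^2$ has to be applied with care to the progressive (rather than predictable) integrands used throughout the paper, invoking the properties of $\mathbf{E}_i[\cdot|\mathcal{P}\otimes\mathcal{B}(E_i)]$ from the preliminary lemma.
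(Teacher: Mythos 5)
First, a point of reference: the paper does not prove this lemma at all --- it is imported verbatim from Wang, Si and Shi \cite{WSS2024} (their Lemma 3.2), so there is no in-paper proof to compare yours against. Judged on its own, your three-phase plan is the standard route taken in \cite{WSS2024} and in Hu--Ji--Xue \cite{HJX18}: verify the ansatz $y^1_t=m_tx^1_t$ by applying It\^o's formula against the decoupling BSDEP \eqref{equation first order decouple m,n} (noting that the cross term $\tilde{n}^i_{(t,e)}(f_{ix}+f_{iy}m_{t-})$ in $\tilde{K}_i$ comes precisely from the product of the jumps of $m$ and $x^1$), then a BDG--Gronwall argument for the $O(\epsilon^{p/2})$ bound, then a remainder FBSDEP fed into the $L^p$-estimate of Proposition \ref{theorem solvability xyz} together with Lemma \ref{lemma of hat x 1}. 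That skeleton is correct.

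The one place where your sketch misidentifies the mechanism is the upgrade of the fourth-moment remainder to $o(\epsilon^2)$. The quantity $[\delta\sigma_{ix}(t)-\sigma_{ix}(t)]$ you propose to send to zero is not meaningful in the paper's notation ($\delta\sigma_{ix}(t)$ is already the difference $\sigma_{ix}(t,\bar{x}_t,\bar{y}_t,u_t)-\sigma_{ix}(t)$), and the spike terms $\delta\sigma_{ix}(t)\hat{x}^{1,\epsilon}_t\mathbbm{1}_{[\bar{t},\bar{t}+\epsilon]}(t)$ are in fact harmless: they contribute $\mathbb{E}\big(\int_{\bar{t}}^{\bar{t}+\epsilon}|\hat{x}^{1,\epsilon}_t|^2dt\big)^2\leqslant\epsilon^2\,\mathbb{E}\sup_t|\hat{x}^{1,\epsilon}_t|^4=O(\epsilon^4)$. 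The genuine obstruction to $O(\epsilon^4)$, and the reason the statement is only $o(\epsilon^2)$, is the Taylor remainder involving differences of second derivatives, e.g.\ $(\tilde{\sigma}^{\epsilon}_{ixx}(t)-\sigma_{ixx}(t))(x^1_t)^2$: after Cauchy--Schwarz one factor is $\big(\mathbb{E}\sup_t|x^1_t|^8\big)^{1/2}=O(\epsilon^2)$ and the other tends to zero only by continuity of $\sigma_{ixx}$ and dominated convergence, with no rate. Your dominated-convergence instinct is correct but must be aimed at these second-derivative terms rather than at the spike set; as written, the final step would not close.
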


Next, we give the estimates about $x^2,y^2,z^{i,2},\tilde{z}^{i,2}$, whose proof is in the Appendix.
\begin{lemma}\label{lemma of x2 y2 z2 etc}
	Suppose Assumption \ref{assumption A1 in Zheng-Shi} holds. For $p\geqslant2$, we have the following estimates:
	\begin{equation*}
		\begin{aligned}
			&\mathbb{E}\bigg[\sup_{0\leqslant t\leqslant T}\Big(|x^2_t|^2+|y^2_t|^2\Big)+\int_0^T|z^{i,2}_t|^2dt+\int_0^T\int_{E_i}|\tilde{z}^{i,2}_{(t,e)}|^2\nu_i(de)dt\bigg]=o(\epsilon),\\
			&\mathbb{E}\bigg[\sup_{0\leqslant t\leqslant T}\Big(|x^2_t|^{p}+|y^2_t|^{p}\Big)+\Big(\int_0^T|z^{i,2}_t|^2dt\Big)^{\frac{p}{2}}
             +\Big(\int_0^T\int_{E_i}|\tilde{z}^{i,2}_{(t,e)}|^2\nu_i(de)dt\Big)^{\frac{p}{2}}\bigg]=o(\epsilon^{\frac{p}{2}}),\\
			&\mathbb{E}\bigg[\sup_{0\leqslant t\leqslant T}\Big(|x^{\epsilon}_t-\bar{x}_t-x^1_t-x^2_t|^2+|y^{\epsilon}_t-\bar{y}_t-y^1_t-y^2_t|^2\Big)+\int_0^T|z^{i,\epsilon}_t-\bar{z}^i_t-z^{i,1}_t-z^{i,2}_t|^2dt\\
			&\quad+\int_0^T\int_{E_i}|\tilde{z}^{i,\epsilon}_{(t,e)}-\bar{\tilde{z}}^i_{(t,e)}-\tilde{z}^{i,1}_{(t,e)}-\tilde{z}^{i,2}_{(t,e)}|^2\nu_i(de)dt\bigg]=o(\epsilon^2).
		\end{aligned}
	\end{equation*}
\end{lemma}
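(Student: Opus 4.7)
The plan is to derive the second-order variational FBSDEP for $(x^2, y^2, z^{i,2}, \tilde z^{i,2})$ and apply the $L^p$-theory of Proposition \ref{theorem solvability xyz}, following the template of \cite{STW20, ZS23} adapted to our coupled jump-diffusion setting. Substituting \eqref{expansion guss} into the first two lines of \eqref{equation state x y zeta in section 3}, expanding $b, \sigma_i, f_i, g, \phi$ to second order around $(\bar x, \bar y, \bar z^i, \bar{\tilde z}^i)$, and collecting the $O(\epsilon)$ terms yields a linear coupled FBSDEP for $(x^2, y^2, z^{i,2}, \tilde z^{i,2})$ whose inhomogeneous part has two kinds of sources: (i) quadratic forms $\tfrac12 \tilde X D^2\psi(t)\tilde X^\top$ in the first-order tuple $\tilde X = (x^1, y^1, z^{i,1'}, \tilde z^{i,1})$ for $\psi=b,g,\sigma_i,f_i$; and (ii) spike contributions supported on $[\bar t, \bar t + \epsilon]$, namely $\delta b(t)\mathbbm{1}$, $\delta g(t)\mathbbm{1}$ in the drifts, $(\delta\sigma_{ix}(t)\, x^1_t + \delta\sigma_{iy}(t)\, y^1_t)\mathbbm{1}$ in the diffusions, analogous sources in the jumps, and $b_{z^i}(t)\Delta^i(t)\mathbbm{1}$-type corrections arising from the splitting $z^{i,1} = \Delta^i(t)\mathbbm{1}_{[\bar t,\bar t+\epsilon]} + z^{i,1'}$ of Lemma \ref{estimate first-order xyz}.

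For the first two displayed estimates, I would bound each source in $L^p$ by $o(\epsilon^{p/2})$. The quadratic sources produce $O(\epsilon^p)$ via Lemma \ref{estimate first-order xyz}, since $\mathbb{E}\sup_t|x^1_t|^{2p} = O(\epsilon^p)$, and similarly for $y^1, z^{i,1'}, \tilde z^{i,1}$. The drift spikes integrate to $O(\epsilon^p)$ by boundedness from Assumption \ref{assumption A1 in Zheng-Shi}. The diffusion and jump spikes of the form $(\cdot)\, x^1_t\mathbbm{1}_{[\bar t,\bar t+\epsilon]}$ contribute, via BDG and sup-norm control, $\mathbb{E}\bigl(\int_{\bar t}^{\bar t+\epsilon}|x^1_t|^2\, dt\bigr)^{p/2} = O(\epsilon^p)$. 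All terms are thus $o(\epsilon^{p/2})$, and a direct application of Proposition \ref{theorem solvability xyz} to the linear coupled FBSDEP gives the first two estimates.

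For the remainder estimate, set $\eta^\epsilon \coloneqq x^\epsilon - \bar x - x^1 - x^2$, $\mu^\epsilon \coloneqq y^\epsilon - \bar y - y^1 - y^2$, $\tau^{i,\epsilon} \coloneqq z^{i,\epsilon} - \bar z^i - z^{i,1} - z^{i,2}$, and analogously $\tilde\tau^{i,\epsilon}$ for $\tilde z^i$. This tuple solves a coupled FBSDEP driven by the same linearized coefficients, with source given by the third-order Taylor remainder of the original coefficients at $(\bar x, \bar y, \bar z^i, \bar{\tilde z}^i)$. Using the bounded and continuous second derivatives from Assumption \ref{assumption A1 in Zheng-Shi}, together with Lemma \ref{lemma of hat x 1}, Lemma \ref{estimate first-order xyz}, and the two estimates just proved, the source is $o(\epsilon^2)$ in the norm required by Proposition \ref{theorem solvability xyz}, and a final application of that proposition closes the argument.

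The main obstacle I anticipate is the careful accounting of the spike component $\Delta^i(t)\mathbbm{1}_{[\bar t,\bar t+\epsilon]}$ buried inside $z^{i,1}$. Because $\Delta^i$ is $O(1)$ in sup-norm rather than $O(\sqrt\epsilon)$, it cannot be absorbed into the ``small'' first-order part; it must be expanded out of the $b_{z^i}, g_{z^i}$-derivatives explicitly so that its quadratic contribution $|\Delta^i|^2\mathbbm{1}_{[\bar t,\bar t+\epsilon]}$ yields a genuine $O(\epsilon)$ integrated source, and simultaneously the $b_{z^i}(t)\Delta^i(t)\mathbbm{1}$ linear piece appears as a forcing term on the second-order drift. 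This is the coupled FBSDEP analogue of Peng's matching between diffusion-variation orders, and is the step where the structural formulas for $K_i$ and $\tilde K_i$ in Lemma \ref{estimate first-order xyz} get used crucially.
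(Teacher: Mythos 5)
Your approach is essentially the paper's: derive the linear coupled FBSDEP satisfied by the second-order remainders, expand the spike component $\Delta^i(t)\mathbbm{1}_{[\bar t,\bar t+\epsilon]}$ out of $z^{i,1}$ so that only $z^{i,1'}=K_i(t)x^1_t$ enters the quadratic forms, bound the Taylor-remainder and spike sources using Lemmas \ref{lemma of hat x 1} and \ref{estimate first-order xyz} together with continuity of the second derivatives, and close with the $L^p$-estimate of Proposition \ref{theorem solvability xyz}; this is exactly what the paper does for the third estimate. One caveat: your claim that \emph{all} sources in the second-order variational equation are $O(\epsilon^p)$ is too strong. The drift of the second-order backward component contains spike terms in which an unbounded (merely $L^p$-integrable) decoupling/adjoint-type process multiplies $\delta\sigma_i(t)\mathbbm{1}_{[\bar t,\bar t+\epsilon]}$; by H\"older and dominated convergence such a term is only $o(\epsilon^{p/2})$, not $O(\epsilon^p)$, which is precisely why the lemma is stated with $o(\epsilon^{p/2})$ rather than $O(\epsilon^p)$ and why the paper explicitly remarks that the order $O(\epsilon^2)$ cannot be reached. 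Since the stated conclusion only requires $o(\epsilon^{p/2})$, this does not invalidate your argument, but the bookkeeping for those cross terms must be done with the weaker order.
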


Now we consider the 5-tuple of first-order variation process  $(\zeta,\kappa^1,\kappa^2,\tilde{\kappa}^1,\tilde{\kappa}^2)$. We will give some properties which are similar to the above. However, due to the quadratic-exponential nature of state equation, some techniques we needed are new.

In order to obtain the first-order variational equation of $(\zeta,\kappa^1,\kappa^2,\tilde{\kappa}^1,\tilde{\kappa}^2)$, we introduce another first-order decouple equation:
\begin{equation}\label{equation first order decouple alpha beta}
	\left\{
	\begin{aligned}
		-d\alpha_t&=\biggl\{\alpha_t\Big[b_x(t)+b_y(t)m_t+b_{z^i}(t)K_i(t)+b_{\tilde{z}^i}(t)\int_{E_i}\tilde{K}_i(t,e)\nu_i(de)\Big]+\beta^i_t\big[\sigma_{ix}(t)+\sigma_{iy}(t)m_t\big]\\
		&\qquad+\int_{E_i}\tilde{\beta}^i_{(t,e)}\mathbf{E}_i\big[f_{ix}(t,e)+f_{iy}(t,e)m_t|\mathcal{P}\otimes\mathcal{B}(E_i)\big]\nu_i(de)+l_x(t)+l_y(t)m_t\\
		&\qquad+l_{z^i}(t)K_i(t)+l_{\tilde{z}^i}(t)\int_{E_i}\tilde{K}_i(t,e)\nu_i(de)+l_{\kappa^i}(t)K_i'(t)+l_{\tilde{\kappa}^i}(t)\int_{E_i}\tilde{K}'_i(t,e)\nu_i(de)\biggr\}dt\\
		&\quad-\beta^i_tdW^i_t-\int_{E_i}\tilde{\beta}^i_{(t,e)}\tilde{N}_i(de,dt),\quad t\in[0,T],\\
		\alpha_T&=\varphi_x(\bar{x}_T,\bar{y}_0),
	\end{aligned}
	\right.
\end{equation}
which, from Lemma \ref{lemma stochastic Lip BSDEP}, admits a unique solution $(\alpha,\beta^1,\beta^2,\tilde{\beta}^1,\tilde{\beta}^2)\in\bigcap_{p>1}\mathcal{N}^p$.

The first-order variational equation for the third equation in \eqref{equation state x y zeta in section 3} is given by
\begin{equation}\label{equation zeta1}
	\left\{
	\begin{aligned}
		-d\zeta^1_t&=\biggl\{l_x(t)x^1_t+l_y(t)y^1_t+l_{z^i}(t)(z^{i,1}_t-\Delta^i_t\mathbbm{1}_{[\bar{t},\bar{t}+\epsilon]}(t))+l_{\tilde{z}^i}(t)\int_{E_i}\tilde{z}^{i,1}_{(t,e)}\nu_i(de)\\
		&\qquad+l_{\kappa^i}(t)(\kappa^{i,1}_t-\pi^i_t\mathbbm{1}_{[\bar{t},\bar{t}+\epsilon]}(t))+l_{\tilde{\kappa}^i}(t)\int_{E_i}\tilde{\kappa}^{i,1}_{(t,e)}\nu_i(de)\\
		&\qquad-\beta^i_t\delta\sigma_i(t)\mathbbm{1}_{[\bar{t},\bar{t}+\epsilon]}(t)\biggr\}dt-\kappa^{i,1}_tdW^i_t-\int_{E_i}\tilde{\kappa}^{i,1}_{(t,e)}\tilde{N}_i(de,dt),\quad t\in[0,T],\\
		\zeta^1_T&=\varphi_x(\bar{x}_T,\bar{y}_0)x^1_T+\varphi_y(\bar{x}_T,\bar{y}_0)y^1_0.
	\end{aligned}
	\right.
\end{equation}
Applying It\^o's formula, we can obtain $\zeta^1=\alpha x^1$, $\kappa^{i,1'}=K'_i(t)x^1_t$, $\tilde{\kappa}^{i,1}=\tilde{K}'_i(t,e)x^1_{t-}$, and
\begin{equation}
	\begin{aligned}
		&K'_i(t)=\alpha_t(\sigma_{ix}(t)+\sigma_{iy}(t)m_t)+\beta^i_t,\quad\pi^i_t=\alpha_t\delta\sigma_i(t)\\
		&\tilde{K}'_i(t,e)=\alpha_{t-}[f_{ix}(t,e)+f_{iy}(t,e)m_{t-}]+\tilde{\beta}^i_{(t,e)}+\tilde{\beta}^i_{(t,e)}[f_{ix}(t,e)+f_{iy}(t,e)m_{t-}],
	\end{aligned}
\end{equation}

We then have the following result whose proof is in the Appendix.

\begin{lemma}\label{lemma estimate of zeta1 hat zeta2}
	Suppose Assumption \ref{assumption A1 in Zheng-Shi} and Assumption \ref{assumption of l(t)} hold, equation \eqref{equation zeta1} admits a unique solution $(\zeta^1,\kappa^{1,1},\kappa^{2,1},\tilde{\kappa}^{1,1},\tilde{\kappa}^{2,1})\in\bigcap_{p>1}\mathcal{N}^p$. Moreover, for all $p\geqslant2$,
	\begin{equation}\label{estimate of zeta1}
		\begin{aligned}
			&\mathbb{E}\bigg[\sup_{0\leqslant t\leqslant T}|\zeta^1_t|^p+\Big(\int_0^T|\kappa^{i,1}_t|^2dt\Big)^{\frac{p}{2}}
            +\Big(\int_0^T\int_{E_i}|\tilde{\kappa}^{i,1}_{(t,e)}|^2N_i(de,dt)\Big)^{\frac{p}{2}}\bigg]=O(\epsilon^{\frac{p}{2}}),
		\end{aligned}
	\end{equation}
	\begin{equation}
		\begin{aligned}
			&\mathbb{E}\bigg[\sup_{0\leqslant t\leqslant T}|\zeta^{\epsilon}_t-\bar{\zeta}_t-\zeta^1_t|^p+\Big(\int_0^T|\kappa^{i,\epsilon}_t-\bar{\kappa}^{i}_t-\kappa^{i,1}_t|^2dt\Big)^{\frac{p}{2}}\\
			&\quad+\Big(\int_0^T\int_{E_i}|\tilde{\kappa}^{i,\epsilon}_{(t,e)}-\bar{\tilde{\kappa}}^{i}_{(t,e)}-\tilde{\kappa}^{i,1}_{(t,e)}|^2N(de,dt)\Big)^{\frac{p}{2}}\bigg]=O(\epsilon^p).
		\end{aligned}
	\end{equation}
\end{lemma}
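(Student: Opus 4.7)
The plan has three stages: establish well-posedness of \eqref{equation zeta1}, derive the $O(\epsilon^{p/2})$ bound on the first-order variation, and then estimate the remainder $\zeta^\epsilon - \bar\zeta - \zeta^1$ by a Taylor analysis combined with the stochastic-Lipschitz $L^p$ theory of Lemma \ref{lemma stochastic Lip BSDEP}.

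For existence and uniqueness of $(\zeta^1,\kappa^{1,1},\kappa^{2,1},\tilde{\kappa}^{1,1},\tilde{\kappa}^{2,1})$, I would read \eqref{equation zeta1} as a linear BSDEP of the form \eqref{equation stochastic Lip BSDEP}. By Assumption \ref{assumption of l(t)}, the coefficients $l_x,l_y,l_{z^i},l_{\tilde z^i}$ are bounded, the coefficient $l_{\kappa^i}(t)$ of $\kappa^{i,1}$ satisfies $|l_{\kappa^i}(t)|\leqslant L_3(1+|\bar{\kappa}^i_t|)$, and the coefficient $l_{\tilde\kappa^i}(t)$ of $\int_{E_i}\tilde{\kappa}^{i,1}\nu_i(de)$ satisfies $|l_{\tilde\kappa^i}(t)|\leqslant L_4(1+\Vert e^{\bar{\tilde\kappa}^i}\Vert)$. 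Since $\bar\kappa^i\in\mathbb{H}^2_{BMO}$ and $\bar{\tilde\kappa}^i\in\mathbb{J}^2_{BMO}$ by Proposition \ref{theorem solvability zeta}, the coefficient of $\kappa^{i,1}$ is a process $H\in\mathbb{H}^2_{BMO}$ in the sense of Assumption \ref{assumption stochastic Lip BSDEP}(1), and the jump coefficient is bounded. The free (inhomogeneous) terms depend on $x^1,y^1,z^{i,1},\tilde z^{i,1}$ and the spike quantities $\beta^i\delta\sigma_i\mathbbm{1}_{[\bar t,\bar t+\epsilon]}$, all of whose $L^p$-moments are finite by Lemma \ref{lemma of hat x 1}, Lemma \ref{estimate first-order xyz} and \eqref{equation first order decouple alpha beta}. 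Hence Lemma \ref{lemma stochastic Lip BSDEP} provides a unique solution in $\bigcap_{p>1}\mathcal{N}^p$.

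For the first estimate I would exploit the decoupling already announced in the text, namely $\zeta^1_t=\alpha_t x^1_t$, $\kappa^{i,1}_t=K'_i(t)x^1_t+\pi^i(t)\mathbbm{1}_{[\bar t,\bar t+\epsilon]}(t)$ and $\tilde{\kappa}^{i,1}_{(t,e)}=\tilde K'_i(t,e)x^1_{t-}$, which can be verified by an It\^o expansion of $\alpha x^1$ and comparison with \eqref{equation zeta1}. Because $m,n^i,\tilde n^i,\sigma_{ix},\sigma_{iy},f_{ix},f_{iy}$ are bounded while $(\alpha,\beta^i,\tilde\beta^i)\in\bigcap_{p>1}\mathcal{N}^p$, Hölder's inequality reduces every term on the left-hand side of \eqref{estimate of zeta1} to a product of $L^p$-norms of $(\alpha,\beta^i,\tilde\beta^i)$ against $\sup_{[0,T]}|x^1_t|$, and the latter is of order $\epsilon^{p/2}$ by Lemma \ref{estimate first-order xyz}. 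The spike contribution $\big(\int_{\bar t}^{\bar t+\epsilon}|\pi^i(t)|^2dt\big)^{p/2}$ is also $O(\epsilon^{p/2})$ because $\pi^i=\alpha\,\delta\sigma_i$ and $\delta\sigma_i$ is bounded, which completes the first estimate.

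For the remainder estimate, set $\hat\zeta^{2,\epsilon}:=\zeta^\epsilon-\bar\zeta-\zeta^1$ and $\hat\kappa^{i,2,\epsilon}:=\kappa^{i,\epsilon}-\bar\kappa^i-\kappa^{i,1}$, $\hat{\tilde\kappa}^{i,2,\epsilon}:=\tilde\kappa^{i,\epsilon}-\bar{\tilde\kappa}^i-\tilde\kappa^{i,1}$. Subtracting \eqref{equation zeta1} and the $\bar\zeta$-equation from the $\zeta^\epsilon$-equation and expanding $l(\cdot,\Xi^\epsilon,u^\epsilon)-l(\cdot)$ to first order around $\bar\Xi$ (with second-order remainder written in integral form), the triple $(\hat\zeta^{2,\epsilon},\hat\kappa^{i,2,\epsilon},\hat{\tilde\kappa}^{i,2,\epsilon})$ satisfies a linear BSDEP with driver
\[
l_y(t)\hat\zeta^{2,\epsilon}_t+l_{\kappa^i}(t)\hat\kappa^{i,2,\epsilon}_t+l_{\tilde\kappa^i}(t)\!\!\int_{E_i}\!\!\hat{\tilde\kappa}^{i,2,\epsilon}_{(t,e)}\nu_i(de)+A^\epsilon(t)+B^\epsilon(t)\mathbbm{1}_{[\bar t,\bar t+\epsilon]}(t),
\]
where $A^\epsilon$ gathers the residuals $x^\epsilon-\bar x-x^1,\ldots,\tilde z^{i,\epsilon}-\bar{\tilde z}^i-\tilde z^{i,1}$ together with the quadratic Taylor remainders, and $B^\epsilon$ absorbs the leftover spike differences $[\delta l(t,\Delta^1,\Delta^2,\pi^1,\pi^2)-l_{z^i}\Delta^i-l_{\kappa^i}\pi^i-\beta^i\delta\sigma_i]$. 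The coefficient structure again fits Assumption \ref{assumption stochastic Lip BSDEP}, so Lemma \ref{lemma stochastic Lip BSDEP} reduces the problem to bounding $\mathbb{E}\bigl(\int_0^T|A^\epsilon(t)|dt\bigr)^{p\bar q^2}+\mathbb{E}\bigl(\int_0^T|B^\epsilon(t)|\mathbbm{1}_{[\bar t,\bar t+\epsilon]}(t)dt\bigr)^{p\bar q^2}$. The forward-variable residuals are controlled by Lemma \ref{estimate first-order xyz} and Lemma \ref{lemma of x2 y2 z2 etc}, while the spike term contributes $O(\epsilon^p)$ by a straightforward $\mathbbm{1}_{[\bar t,\bar t+\epsilon]}$-integration.

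The main obstacle is the quadratic-exponential part of $A^\epsilon$: the Taylor remainder of $\frac{\theta}{2}(\kappa^i)^2$ produces the term $\int_0^T |\hat\kappa^{i,1,\epsilon}_t|^2 dt$, and the jump piece $\frac{1}{\theta}\int_{E_i}[e^{\theta\tilde\kappa^i}-\theta\tilde\kappa^i-1]\nu_i(de)$ produces $\int_0^T\int_{E_i}e^{\theta\bar{\tilde\kappa}^i_{(t,e)}}|\hat{\tilde\kappa}^{i,1,\epsilon}_{(t,e)}|^2\nu_i(de)dt$. To handle these I would apply Lemma \ref{lemma of hat zeta 1} with exponent $2p\bar q^2$ in place of $p$, giving $\mathbb{E}(\int_0^T|\hat\kappa^{i,1,\epsilon}|^2dt)^{p\bar q^2}=O(\epsilon^{p\bar q^2})$, and control the exponential weight by the reverse Hölder inequality for the BMO-martingale $\int\!\!\int\bar{\tilde\kappa}^i\tilde N_i(de,ds)$ together with the energy inequality (Lemma \ref{energy inequality}). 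After taking the $\bar q^2$-th root, these combine with the forward-variable estimates to produce the claimed $O(\epsilon^p)$ bound.
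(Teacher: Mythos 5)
Your well-posedness argument and your proof of the first estimate (via the decoupling $\zeta^1=\alpha x^1$, $\kappa^{i,1}_t=K'_i(t)x^1_t+\pi^i_t\mathbbm{1}_{[\bar t,\bar t+\epsilon]}(t)$, $\tilde{\kappa}^{i,1}_{(t,e)}=\tilde K'_i(t,e)x^1_{t-}$, followed by H\"older against $(\alpha,\beta^i,\tilde\beta^i)\in\bigcap_{p>1}\mathcal{N}^p$) coincide with the paper's and are correct. The remainder estimate, however, has a genuine gap, and it is precisely the point the paper identifies as its main new difficulty. After reducing to $\mathbb{E}\big(\int_0^T|A^\epsilon(t)|dt\big)^{p\bar q^2}$ via Lemma \ref{lemma stochastic Lip BSDEP}, the hardest contribution is not the pure quadratic Taylor remainder $\int_0^T|\hat\kappa^{i,1,\epsilon}_t|^2dt$ (which, as you say, is $O(\epsilon^{p\bar q^2})$ by Lemma \ref{lemma of hat zeta 1} with exponent $2p\bar q^2$), but the \emph{cross terms on the spike interval} produced by $\delta l_{\kappa^i}$ and $\delta l_{\tilde\kappa^i}$: since Assumption \ref{assumption of l(t)} only gives $|l_{\kappa^i}|\leqslant L_3(1+|\kappa^i|)$ and $|l_{\tilde\kappa^i}|\leqslant L_4(1+\Vert e^{\tilde\kappa^i}\Vert)$, one must bound
\begin{equation*}
\mathbb{E}\Big(\int_0^T|\bar\kappa^i_t|\,|\hat\kappa^{i,1,\epsilon}_t|\,\mathbbm{1}_{[\bar t,\bar t+\epsilon]}(t)\,dt\Big)^{p\bar q^2}
\quad\text{and}\quad
\mathbb{E}\Big(\int_0^T\int_{E_i}|\bar{\tilde\kappa}^i_{(t,e)}|\,|\hat\kappa^{i,1,\epsilon}_t|\,\mathbbm{1}_{[\bar t,\bar t+\epsilon]}(t)\,\nu_i(de)dt\Big)^{p\bar q^2}.
\end{equation*}
Cauchy--Schwarz gives only $\big(\mathbb{E}(\int_{\bar t}^{\bar t+\epsilon}|\bar\kappa^i_t|^2dt)^{p\bar q^2}\big)^{1/2}\cdot O(\epsilon^{p\bar q^2/2})=o(\epsilon^{p\bar q^2/2})$, which falls short of the claimed $O(\epsilon^{p\bar q^2})$: the factor $\mathbb{E}(\int_{\bar t}^{\bar t+\epsilon}|\bar\kappa^i_t|^2dt)^{p\bar q^2}$ tends to $0$ only by dominated convergence, with no rate. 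Your proposed tools cannot repair this: the energy inequality and the reverse H\"older inequality control $\int_\tau^T|\bar\kappa^i_s|^2ds$ from stopping times to $T$, not the mass of $|\bar\kappa^i|^2$ on a small deterministic interval, so BMO membership of $\bar\kappa^i$ gives no $O(\epsilon)$ bound for $\int_{\bar t}^{\bar t+\epsilon}|\bar\kappa^i_t|^2dt$. Your statement that the spike term is handled ``by a straightforward $\mathbbm{1}_{[\bar t,\bar t+\epsilon]}$-integration'' therefore does not go through.

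The missing idea is the paper's introduction of the deterministic set $\Gamma_M\coloneqq\{t:\mathbb{E}|\bar\kappa^1_t|^2\vee\mathbb{E}|\bar\kappa^2_t|^2\vee\mathbb{E}\int_{E_1}|\bar{\tilde\kappa}^1_{(t,e)}|^2\nu_1(de)\vee\mathbb{E}\int_{E_2}|\bar{\tilde\kappa}^2_{(t,e)}|^2\nu_2(de)\leqslant M\}$ and the replacement of $\mathbbm{1}_{[\bar t,\bar t+\epsilon]}$ by $\mathbbm{1}_{E_\epsilon}$ with $E_\epsilon=[\bar t,\bar t+\epsilon]\cap\Gamma_M$. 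On $E_\epsilon$ one has $\int_{E_\epsilon}\mathbb{E}|\bar\kappa^i_t|^2dt\leqslant M\epsilon$, and a different arrangement of H\"older's inequality (for $1<p\bar q^2<2$, putting the expectation inside the $\bar\kappa^i$ factor) yields the bound $C M^{p\bar q^2/2}\epsilon^{p\bar q^2}$. This restriction propagates into the variational equations \eqref{equation zeta1+zeta2}--\eqref{equation zeta2} and is undone only at the very end of the argument by letting $M\to\infty$ using $\mathbbm{1}_{\cup_M\Gamma_M}=1$ $dt$-a.e. Without this device the lemma's conclusion $O(\epsilon^p)$ is not reachable by your argument.
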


Noting \eqref{expansion guss} and inspired by \cite{ZS23}, we introduce the following variational equation of $(\zeta,\kappa^1,\kappa^2,\\\tilde{\kappa}^1,\tilde{\kappa}^2)$:
\begin{equation}\label{equation zeta1+zeta2}
	\left\{
	\begin{aligned}
		-d(\zeta^1_t+\zeta^2_t)&=\biggl\{l_x(t)(x^1_t+x^2_t)+l_y(t)(y^1_t+y^2_y)+l_{z^i}(t)(z^{i,1}_t-\Delta^i_t\mathbbm{1}_{E_\epsilon}(t)+z^{i,2}_t)\\
		&\qquad+l_{\tilde{z}^i}(t)\int_{E_i}\big(\tilde{z}^{i,1}_{(t,e)}+\tilde{z}^{i,2}_{(t,e)}\big)\nu_i(de)+l_{\kappa^i}(t)(\kappa^{i,1'}_t+\kappa^{i,2}_t)\\
        &\qquad+l_{\tilde{\kappa}^i}(t)\int_{E_i}\big(\tilde{\kappa}^{i,1}_{(t,e)}+\tilde{\kappa}^{i,2}_{(t,e)}\big)\nu_i(de)+\frac{1}{2}\tilde{\Xi}(t)D^2l(t)\tilde{\Xi}(t)^\top\\
		&\qquad+\delta l(t,\Delta^1,\Delta^2,\pi^1,\pi^2)\mathbbm{1}_{E_\epsilon}(t)\biggr\}dt\\
		&\quad-(\kappa^{i,1}_t+\kappa^{i,2}_t)dW^i_t-\int_{E_i}\big(\tilde{\kappa}^{i,1}_{(t,e)}+\tilde{\kappa}^{i,2}_{(t,e)}\big)\tilde{N}_i(de,dt),\quad t\in[0,T],\\
		\zeta^1_T+\zeta^2_T&=\varphi_x(\bar{x}_T,\bar{y}_0)(x^1_T+x^2_T)+\varphi_y(\bar{x}_T,\bar{y}_0)(y^1_0+y^2_0)+\frac{1}{2}D^2\varphi(\bar{x}_T,\bar{y}_0)[x^1_T,y^1_0]^2,
	\end{aligned}
	\right.
\end{equation}
where $E_{\epsilon}$ is defined in the proof of Lemma \ref{lemma estimate of zeta1 hat zeta2}. Then, the second order variational equation is given by
\begin{equation}\label{equation zeta2}
	\left\{
	\begin{aligned}
		-d\zeta^2_t&=\biggl\{l_x(t)x^2_t+l_y(t)y^2_t+l_{z^i}(t)z^{i,2}_t+l_{\tilde{z}^i}(t)\int_{E_i}\tilde{z}^{i,2}_{(t,e)}\nu_i(de)+l_{\kappa^i}(t)\kappa^{i,2}_t\\
		&\qquad+l_{\tilde{\kappa}^i}(t)\int_{E_i}\tilde{\kappa}^{i,2}_{(t,e)}\nu_i(de)+\frac{1}{2}\tilde{\Xi}(t)D^2l(t)\tilde{\Xi}(t)^\top\\
        &\qquad+[\delta l(t,\Delta^1,\Delta^2,\pi^1,\pi^2)+\beta^i_t\delta\sigma_i(t)]\mathbbm{1}_{E_\epsilon}(t)\biggr\}dt\\
		&\quad-\kappa^{i,2}_tdW^i_t-\int_{E_i}\tilde{\kappa}^{i,2}_{(t,e)}\tilde{N}_i(de,dt),\quad t\in[0,T],\\
		\zeta^2_T&=\varphi_x(\bar{x}_T,\bar{y}_0)x^2_T+\varphi_y(\bar{x}_T,\bar{y}_0)y^2_0+\frac{1}{2}D^2\varphi(\bar{x}_T,\bar{y}_0)[x^1_T,y^1_0]^2.
	\end{aligned}
	\right.
\end{equation}

\begin{lemma}\label{lemma estimate of zeta2,hat zeta 3}
	Suppose Assumption \ref{assumption A1 in Zheng-Shi} and Assumption \ref{assumption of l(t)} hold. Equation \eqref{equation zeta2} admits a unique solution
$(\zeta^2,\kappa^{1,2},\kappa^{2,2},\tilde{\kappa}^{1,2},\tilde{\kappa}^{2,2})\in\bigcap_{p>1}\mathcal{N}^p$. Moreover, for all $p\geqslant2$, we have the following estimates
\begin{equation}
	\mathbb{E}\bigg[\sup_{0\leqslant t\leqslant T}|\zeta^2_t|^2+\int_0^T|\kappa^{i,2}_t|^2dt+\int_0^T\int_{E_i}|\tilde{\kappa}^{i,2}_{(t,e)}|^2\nu_i(de)dt\bigg]=o(\epsilon),
\end{equation}
\begin{equation}
	\mathbb{E}\bigg[\sup_{0\leqslant t\leqslant T}|\zeta^2_t|^p+\Big(\int_0^T|\kappa^{i,2}_t|^2dt\Big)^{\frac{p}{2}}
    +\Big(\int_0^T\int_{E_i}|\tilde{\kappa}^{i,2}_{(t,e)}|^2\nu_i(de)dt\Big)^{\frac{p}{2}}\bigg]=o(\epsilon^{\frac{p}{2}}),
\end{equation}
\begin{equation}
	\begin{aligned}
		\mathbb{E}\bigg[&\sup_{0\leqslant t\leqslant T}|\zeta^{\epsilon}_t-\bar{\zeta}_t-\zeta^1_t-\zeta^2_t|^2+\int_0^T|\kappa^{i,\epsilon}_t-\bar{\kappa}^i_t-\kappa^{i,1}_t-\kappa^{i,2}_t|^2dt\\
		&+\int_0^T\int_{E_i}|\tilde{\kappa}^{i,\epsilon}_{(t,e)}-\bar{\tilde{\kappa}}^{i}_{(t,e)}-\tilde{\kappa}^{i,1}_{(t,e)}-\tilde{\kappa}^{1,2}_{(t,e)}|^2\nu_i(de)dt\bigg]=o(\epsilon^2).
	\end{aligned}
\end{equation}
\end{lemma}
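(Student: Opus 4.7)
The plan is to run all three conclusions through Lemma \ref{lemma stochastic Lip BSDEP}, since the quadratic-exponential nature of the generator places the equations for $\zeta^2$ and for the residual $\hat\zeta^3 := \zeta^\epsilon-\bar\zeta-\zeta^1-\zeta^2$ in the stochastic-Lipschitz BSDEP framework: the coefficient of $\kappa^{i,2}$ in the driver of \eqref{equation zeta2} is $l_{\kappa^i}(t,\bar\Xi(t),\bar u_t)$, which by Assumption \ref{assumption of l(t)} grows as $1+|\bar\kappa^i_t|$, and $\bar\kappa^i\in\mathbb{H}^2_{BMO}$ by Proposition \ref{theorem solvability zeta}. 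Existence and uniqueness in $\bigcap_{p>1}\mathcal{N}^p$ then follow as soon as the inhomogeneous source and terminal value of \eqref{equation zeta2} lie in every $L^p$, which is immediate from Lemmas \ref{estimate first-order xyz} and \ref{lemma of x2 y2 z2 etc} combined with the identities $\kappa^{i,1'}_t=K_i'(t)x^1_t$, $\tilde\kappa^{i,1}_{(t,e)}=\tilde K_i'(t,e)x^1_{t-}$ already extracted for $\zeta^1$.

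For the first two size bounds on $\zeta^2$ itself I would plug the orders of magnitude of the source and terminal data into the $\bar q$-th root estimate of Lemma \ref{lemma stochastic Lip BSDEP}. The linear part $l_x x^2 + l_y y^2 + l_{z^i}z^{i,2} + l_{\tilde z^i}\int_{E_i}\tilde z^{i,2}\nu_i(de)$ contributes $o(\epsilon^{p/2})$ by Lemma \ref{lemma of x2 y2 z2 etc}; the quadratic form $\tilde\Xi(t) D^2 l(t)\tilde\Xi(t)^\top$ is $O(\epsilon)$ in $L^p$ because each coordinate of $\tilde\Xi$ is $O(\sqrt\epsilon)$ in $L^p$ by Lemma \ref{estimate first-order xyz}; the spike contribution $[\delta l(t,\Delta^1,\Delta^2,\pi^1,\pi^2)+\beta^i_t\delta\sigma_i(t)]\mathbbm{1}_{E_\epsilon}(t)$ has time-support $|E_\epsilon|\le\epsilon$ and contributes $O(\epsilon^p)$ after raising to the $p$th power; and $\mathbb{E}|\zeta^2_T|^p$ is $o(\epsilon^{p/2})+O(\epsilon^p)$ via $x^2_T$, $y^2_0$ and the quadratic $D^2\varphi[x^1_T,y^1_0]^2$. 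Combined, these yield both the $o(\epsilon)$ and $o(\epsilon^{p/2})$ bounds.

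For the residual, setting $\hat\kappa^{i,3},\hat{\tilde\kappa}^{i,3}$ analogously, I would subtract \eqref{equation zeta1+zeta2} from the equation satisfied by $\zeta^\epsilon-\bar\zeta$ and perform a second-order Taylor expansion of $l$ around $\bar\Xi$. This exhibits $\hat\zeta^3$ as the solution of a stochastic-Lipschitz BSDEP whose driver breaks into (i) linear terms in $(\hat\kappa^{i,3},\hat{\tilde\kappa}^{i,3})$ with BMO coefficient driven by $|\bar\kappa^i|$, absorbed by Lemma \ref{lemma stochastic Lip BSDEP}; (ii) second-order Taylor remainders in the forward variables, controlled as $o(\epsilon^2)$ via Lemmas \ref{estimate first-order xyz} and \ref{lemma of x2 y2 z2 etc}; and (iii) cross terms inherited from the quadratic-exponential structure, notably $|\bar\kappa^i_t|\cdot|\kappa^{i,\epsilon}_t-\bar\kappa^i_t-\kappa^{i,1}_t|\cdot\mathbbm{1}_{[\bar t,\bar t+\epsilon]}(t)$. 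Category (iii) is the principal obstacle, flagged in point (2) of the introduction: a naive Cauchy--Schwarz bound using only the sharp second-moment estimate of Lemma \ref{lemma estimate of zeta1 hat zeta2} yields just $o(\epsilon)$, since $\bar\kappa^i$ is merely in $\mathbb{H}^2_{BMO}$ rather than bounded. I would resolve this via the $\Gamma_M$-truncation strategy extended from \cite{BLLW24}: confining the spike to $E_\epsilon := [\bar t,\bar t+\epsilon]\cap\Gamma_M$, where $\Gamma_M$ is deterministic with $|\bar\kappa^i|\le M$, renders the bad factor bounded, so the sharpened estimate $\mathbb{E}\bigl(\int_0^T|\kappa^{i,\epsilon}_t-\bar\kappa^i_t-\kappa^{i,1}_t|^2\,dt\bigr)^2=O(\epsilon^4)$ from Lemma \ref{lemma estimate of zeta1 hat zeta2} delivers $o(\epsilon^2)$ on $E_\epsilon$. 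The BMO integrability of $\bar\kappa^i$ makes the passage $M\to\infty$ legitimate, after which a final application of Lemma \ref{lemma stochastic Lip BSDEP} converts the driver estimates into the claimed $o(\epsilon^2)$ control on $\hat\zeta^3,\hat\kappa^{i,3},\hat{\tilde\kappa}^{i,3}$.
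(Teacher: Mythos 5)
Your proposal follows the paper's own route: the same decomposition into $\hat{\zeta}^{3,\epsilon}=\zeta^{\epsilon}-\bar{\zeta}-\zeta^1-\zeta^2$, the same reliance on Lemma \ref{lemma stochastic Lip BSDEP}, and the same use of the improved $O(\epsilon^{p})$-estimate for $\kappa^{i,\epsilon}-\bar{\kappa}^i-\kappa^{i,1}$ from Lemma \ref{lemma estimate of zeta1 hat zeta2} together with the truncation $E_\epsilon=[\bar t,\bar t+\epsilon]\cap\Gamma_M$ to control the quadratic-exponential cross terms. Two points in your write-up need correcting. First, $\Gamma_M$ is \emph{not} a set on which $|\bar{\kappa}^i|\leqslant M$ pointwise --- no deterministic time set can enforce a pointwise bound on a random process; it is the deterministic set $\{t:\mathbb{E}|\bar{\kappa}^1_t|^2\vee\cdots\leqslant M\}$, and all the truncation buys is $\int_{E_\epsilon}\mathbb{E}|\bar{\kappa}^i_t|^2\,dt\leqslant M\epsilon$. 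In the present lemma the cross term $\mathbb{E}\big(\int_0^T|\bar{\kappa}^i_t||\hat{\kappa}^{i,2,\epsilon}_t|\mathbbm{1}_{E_\epsilon}(t)dt\big)^{p\bar{q}^2}$ is handled by Cauchy--Schwarz, pairing the factor $\mathbb{E}\big(\int_{E_\epsilon}|\bar{\kappa}^i_t|^2dt\big)^{p\bar{q}^2}=o(1)$ (dominated convergence) with the $O(\epsilon^{2p\bar{q}^2})$ factor supplied by Lemma \ref{lemma estimate of zeta1 hat zeta2}; the truncation's real work was already done inside that earlier lemma to secure the $O(\epsilon^p)$ rate for $\hat{\kappa}^{i,2,\epsilon}$. (The passage $M\to\infty$ belongs to the derivation of the optimality condition in Section 3.4, not to this lemma, where $M$ stays fixed and constants may depend on it.) Second, your claim that the spike contribution $[\delta l(t,\Delta^1,\Delta^2,\pi^1,\pi^2)+\beta^i_t\delta\sigma_i(t)]\mathbbm{1}_{E_\epsilon}(t)$ is $O(\epsilon^p)$ is exactly what the paper points out \emph{fails}: $\beta^i$ and the state processes entering $\delta l$ are only $L^p$-integrable in time, so $\mathbb{E}\big(\int_0^T\beta^i_t\delta\sigma_i(t)\mathbbm{1}_{E_\epsilon}(t)dt\big)^2=o(\epsilon)$ but not $O(\epsilon^2)$; this is precisely why the first two conclusions are stated as $o(\epsilon)$ and $o(\epsilon^{p/2})$ rather than $O(\epsilon^2)$ and $O(\epsilon^p)$. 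The weaker $o$-rates you actually need still follow, so the lemma survives, but the reasoning as written overclaims.
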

The proof is left in the Appendix.

\subsection{Adjoint equations and optimal condition}

It follows from Lemma \ref{lemma estimate of zeta2,hat zeta 3} that the cost functional \eqref{cost functional zeta0} has the following expansion
\begin{equation}\label{expansion of cost functional zeta0}
	J(u^{\epsilon})-J(\bar{u})=\zeta^{\epsilon}_0-\bar{\zeta}_0=\zeta^1_0+\zeta^2_0+o(\epsilon).
\end{equation}
Then we introduce the adjoint equations for $x^1+x^2$, $y^1+y^2$, $\zeta^1+\zeta^2$ as:
\begin{equation}\label{equation adjoint r}
	\left\{
	\begin{aligned}
		dr_t&=r_tl_{\kappa^i}(t)dW^i_t+\int_{E_i}r_{t-}l_{\tilde{\kappa}^i}(t)\tilde{N}_i(de,dt),\quad t\in[0,T],\\
		r_0&=1,
	\end{aligned}
	\right.
\end{equation}
and
\begin{equation}\label{equation adjoint s p q}
	\left\{
	\begin{aligned}
		ds_t&=\Bigl\{r_tl_y(t)+s_tg_y(t)+p_tb_y(t)+q^i_t\sigma_{iy}(t)+\int_{E_i}\tilde{q}^i_{(t,e)}\mathbf{E}_i\big[f_{iy}(t,e)|\mathcal{P}\otimes\mathcal{B}(E_i)\big]\nu_i(de)\Bigr\}dt\\
		&\quad+\Bigl\{r_tl_{z^i}(t)+s_tg_{z^i}(t)+p_tb_{z^i}(t)\Bigr\}dW^i_t\\
		&\quad+\int_{E_i}\Bigl\{r_{t-}l_{\tilde{z}^i}(t)+s_{t-}g_{\tilde{z}^i}(t)+p_{t-}b_{\tilde{z}^i}(t)\Bigr\}\tilde{N}_i(de,dt),\\
		-dp_t&=\Bigl\{r_tl_x(t)+s_tg_x(t)+p_tb_x(t)+q^i_t\sigma_{ix}(t)+\int_{E_i}\tilde{q}^i_{(t,e)}\mathbf{E}_i\big[f_{ix}(t,e)|\mathcal{P}\otimes\mathcal{B}(E_i)\big]\nu_i(de)\Bigr\}dt\\
		&\quad-q^i_tdW^i_t-\int_{E_i}\tilde{q}^i_{(t,e)}\tilde{N}_i(de,dt),\quad t\in[0,T],\\
		s_0&=r_T\varphi_y(\bar{x}_T,\bar{y_0}),\quad p_T=r_T\varphi_x(\bar{x}_T,\bar{y}_0)+s_T\phi_x(\bar{x}_T).
	\end{aligned}
	\right.
\end{equation}
Applying It\^o's formula to $r_t(\zeta^1_t+\zeta^2_t)+s_t(y^1_t+y^2_t)-p_t(x^1_t+x^2_t)$, \eqref{expansion of cost functional zeta0} can be written as
\begin{equation}\label{cost functional del first order}
	\begin{aligned}		
        &J(u^\epsilon)-J(\bar{u})=\mathbb{E}\bigg[\frac{1}{2}(r_T\varphi_{xx}(\bar{x}_T,\bar{y}_0)+s_T\phi_{xx}(\bar{x}_T))(x^1_T)^2\bigg]
        +\mathbb{E}\bigg[\int_0^T\frac{1}{2}\Big(p_t\tilde{\Xi}_2(t)D^2b(t)\tilde{\Xi}_2(t)^\top\\
		&\ +q^i_t\tilde{\Xi}_3(t)D^2\sigma_i(t)\tilde{\Xi}_3(t)^\top+\int_{E_i}\tilde{q}^i_{(t,e)}\mathbf{E}_i\big[\tilde{\Xi}_3(t)D^2f_i(t,e)\tilde{\Xi}_3(t)^\top|\mathcal{P}\otimes\mathcal{B}(E_i)\big]\nu_i(de)\\
		&\ +s_t\tilde{\Xi}_2(t)D^2g(t)\tilde{\Xi}_2(t)^\top+r_t\tilde{\Xi}(t)D^2l(t)\tilde{\Xi}(t)^\top\Big)+q^i_t[\delta\sigma_{ix}(t)x^1_t+\delta\sigma_{iy}(t)y^1_t]\mathbbm{1}_{E_\epsilon}(t)\\
		&\ +\Bigl\{p_t[-b_{z^i}(t)m_t\delta\sigma_i(t)+\delta b(t,\Delta^1,\Delta^2)]+q^i_t\delta\sigma_i(t)+s_t[-g_{z^i}(t)m_t\delta\sigma_i(t)+\delta g(t,\Delta^1,\Delta^2)]\\
		&\ +r_t[-l_{z^i}(t)m_t\delta\sigma_i(t)-l_{\kappa^i}(t)\alpha_t\delta\sigma_i(t)+\delta l(t,\Delta^1,\Delta^2,\pi^1,\pi^2)]\Bigr\}\mathbbm{1}_{E_\epsilon}(t)dt\bigg]+o(\epsilon),
	\end{aligned}
\end{equation}
where
\begin{equation*}
	\begin{aligned}
		\tilde{\Xi}&\equiv\Big[x^1,y^1,z^{1,1'},z^{2,1'},\int_{E_1}\tilde{z}^{1,1}_{(t,e)}\nu_1(de),\int_{E_2}\tilde{z}^{2,1}_{(t,e)}\nu_2(de),\\
        &\qquad \kappa^{1,1'},\kappa^{2,1'},\int_{E_1}\tilde{\kappa}^{1,1}_{(t,e)}\nu_1(de),\int_{E_2}\tilde{\kappa}^{2,1}_{(t,e)}\nu_2(de)\Big],\\
		\tilde{\Xi}_2&\equiv\Big[x^1,y^1,z^{1,1'},z^{2,1'},\int_{E_1}\tilde{z}^{1,1}_{(t,e)}\nu_1(de),\int_{E_2}\tilde{z}^{2,1}_{(t,e)}\nu_2(de)\Big],\quad\tilde{\Xi}_3\equiv[x^1,y^1].
	\end{aligned}
\end{equation*}

We can check that
\begin{equation*}
	\mathbb{E}\bigg[\int_0^Tq^i_t\big[\delta\sigma_{ix}(t)x^1_t+\delta\sigma_{iy}(t)y^1_t\big]\mathbbm{1}_{E_\epsilon}(t)dt\bigg]=o(\epsilon).
\end{equation*}

We continue to give the adjoint equation of $(x^1_t)^2$ as
\begin{equation}\label{equation adjoint P Q}
	\left\{
	\begin{aligned}
		-dP_t&=\Bigl\{r_t\Theta_1(t)D^2l(t)\Theta_1(t)^\top+s_t\Theta_2(t)D^2g(t)\Theta_2(t)^\top+p_t\Theta_2(t)D^2b(t)\Theta_2(t)^\top\\
		&\qquad+q^i_t\Theta_3(t)D^2\sigma_i(t)\Theta_3(t)^\top+\int_{E_i}\tilde{q}^i_{(t,e)}\mathbf{E}_i\big[\Theta_3(t)D^2f_i(t,e)\Theta_3(t)^\top|\mathcal{P}\otimes\mathcal{B}(E_i)\big]\nu_i(de)\\
		&\qquad+2P_t[b_x(t)+b_y(t)m_t+b_{z^i}(t)K_i(t)+\int_{E_i}b_{\tilde{z}^i}(t)\tilde{K}_i(t,e)\nu_i(de)]+P_t[\sigma_x(t)\\
		&\qquad+\sigma_y(t)m_t]^2+2Q^i_t[\sigma_{ix}(t)+\sigma_{iy}(t)m_t]+\int_{E_i}P_t\mathbf{E}_i\big[[f_{ix}(t,e)\\
		&\qquad+f_{iy}(t,e)m_t]^2|\mathcal{P}\otimes\mathcal{B}(E_i)\big]\nu_i(de)+\int_{E_i}\tilde{Q}^i_{(t,e)}\mathbf{E}_i\big[2[f_{ix}(t,e)+f_{iy}(t,e)m_t]\\
        &\qquad+[f_{ix}(t,e)+f_{iy}(t,e)m_t]^2|\mathcal{P}\otimes\mathcal{B}(E_i)\big]\nu_i(de)\Bigr\}dt\\
		&\quad-Q^i_tdW^i_t-\int_{E_i}\tilde{Q}^i_{(t,e)}\tilde{N}_i(de,dt),\quad t\in[0,T],\\
		P_T&=r_T\varphi_{xx}(\bar{x}_T,\bar{y}_0)+s_T\phi_{xx}(\bar{x}_T),
	\end{aligned}
	\right.
\end{equation}
where
\begin{equation*}
	\begin{aligned}
		\Theta_1(t)&\coloneqq\Big[1,m,K_1,K_2,\int_{E_1}\tilde{K}_1(t,e)\nu_1(de),\int_{E_2}\tilde{K}_2(t,e)\nu_2(de),\\
        &\qquad K'_1,K'_2,\int_{E_1}\tilde{K}'_1(t,e)\nu_1(de),\int_{E_2}\tilde{K}'_2(t,e)\nu_2(de)\Big],\\
		\Theta_2(t)&\coloneqq\Big[1,m,K_1,K_2,\int_{E_1}\tilde{K}_1(t,e)\nu_1(de),\int_{E_2}\tilde{K}_2(t,e)\nu_2(de)\Big],\quad\Theta_3(t)\coloneqq[1,m].
	\end{aligned}
\end{equation*}

Applying It\^o's formula to $P_t(x^1_t)^2$ and combining \eqref{cost functional del first order}, we educe the variational inequality:
\begin{equation}\label{equation variation inequality}
	\begin{aligned}
		0&\leqslant J(u^{\epsilon})-J(\bar{u})\\
		&=\mathbb{E}\int_0^T\Bigl\{\frac{1}{2}P_t(\delta\sigma_i(t))^2+p_t\big[-b_{z^i}(t)m_t\delta\sigma_i(t)+\delta b(t,\Delta^1,\Delta^2)\big]+q^i_t\delta\sigma_i(t)\\
		&\qquad\qquad+s_t\big[-g_{z^i}(t)m_t\delta\sigma_i(t)+\delta g(t,\Delta^1,\Delta^2)\big]+r_t\big[-l_{z^i}(t)m_t\delta\sigma_i(t)\\
		&\qquad\qquad-l_{\kappa^i}(t)\alpha_t\delta\sigma_i(t)+\delta l(t,\Delta^1,\Delta^2,\pi^1,\pi^2)\big]\Bigr\}\mathbbm{1}_{E_\epsilon}(t)dt+o(\epsilon).
	\end{aligned}
\end{equation}

Define the Hamiltonian function
\begin{equation}\label{Hamiltonian function}
	\begin{aligned}
		\mathcal{H}&\big(t,x,y,z^1,z^2,\tilde{z}^1,\tilde{z}^2,\zeta,\kappa^1,\kappa^2,\tilde{\kappa}^1,\tilde{\kappa}^2,m,\alpha,p,q^1,q^2,s,r,P,u\big)\\
		&=\frac{1}{2}P_t\sum_{i=1}^2\big[\sigma_i(t,x,y,u)-\sigma_i(t,\bar{x}_t,\bar{y}_t,\bar{u}_t)\big]^2+p_tb(t,x,y,z^1+\Delta^1,z^2+\Delta^2,\tilde{z}^1,\tilde{z}^2,u)\\
		&\quad+\sum_{i=1}^2\big[q^i_t-(p_tb_{z^i}(t)+s_tg_{z^i}(t)-r_tl_{z^i}(t))m_t-r_tl_{\kappa^i}(t)\alpha_t\big]\sigma_i(t,x,y,u)\\
		&\quad+s_tg(t,x,y,z^1+\Delta^1,z^2+\Delta^2,\tilde{z}^1,\tilde{z}^2,u)\\
		&\quad+r_tl(t,x,y,z^1+\Delta^1,z^2+\Delta^2,\tilde{z}^1,\tilde{z}^2,\kappa^1+\Delta^1,\kappa^2+\Delta^2,\tilde{\kappa}^1,\tilde{\kappa}^2,u),
	\end{aligned}
\end{equation}
then the variational inequality \eqref{equation variation inequality} is equivalent to
\begin{equation}\label{equation variation inequality 2}
	\begin{aligned}
		0&\leqslant\mathbb{E}\bigg[\int_0^T\Bigl\{\mathcal{H}\big(t,\bar{x},\bar{y},\bar{z}^1,\bar{z}^2,\bar{\tilde{z}}^1,\bar{\tilde{z}}^2,\bar{\zeta},\bar{\kappa}^1,\bar{\kappa}^2,
        \bar{\tilde{\kappa}}^1,\bar{\tilde{\kappa}}^2, m,\alpha,p,q^1,q^2,s,r,P,u\big)\\
		&\qquad\qquad-\mathcal{H}(t)\Bigr\}\mathbbm{1}_{E_\epsilon}(t)dt\bigg]+o(\epsilon),
	\end{aligned}
\end{equation}
where $\mathcal{H}(t)\equiv\mathcal{H}\big(t,\bar{x},\bar{y},\bar{z}^1,\bar{z}^2,\bar{\tilde{z}}^1,\bar{\tilde{z}}^2,\bar{\zeta},\bar{\kappa}^1,\bar{\kappa}^2,\bar{\tilde{\kappa}}^1,
\bar{\tilde{\kappa}}^2,m,\alpha,p,q^1,q^2,s,r,P,\bar{u}\big)$.

Recalling the definition $E_\epsilon=[\bar{t},\bar{t}+\epsilon]\cap\Gamma_M$, it allows us to conclude from \eqref{equation variation inequality 2} that
\begin{equation*}
	\begin{aligned}
		&\mathbb{E}\Big[\Big(\mathcal{H}(t,\bar{x},\bar{y},\bar{z}^1,\bar{z}^2,\bar{\tilde{z}}^1,\bar{\tilde{z}}^2,\bar{\zeta},\bar{\kappa}^1,\bar{\kappa}^2,
        \bar{\tilde{\kappa}}^1,\bar{\tilde{\kappa}}^2,m,\alpha,p,q^1,q^2,s,r,P,u)\\
		&\qquad-\mathcal{H}(t)\Big)\mathbbm{1}_{\Gamma_M}(t)\Big\vert\mathcal{F}^{Y}_t\Big]\geqslant0,\quad a.e.\,\,t\in[0,T],\quad\mathbb{P}\mbox{-}a.s.,
	\end{aligned}
\end{equation*}
for all $M\geqslant1$. Noting that $\mathbbm{1}_{\cup_{M\geqslant1}\Gamma_M}(t)=1$, $a.e.\,t\in[0,T]$, we finally deduce the following optimal condition
\begin{equation}\label{equation optimal condition}
	\begin{aligned}
		&\mathbb{E}\Big[\mathcal{H}(t,\bar{x},\bar{y},\bar{z}^1,\bar{z}^2,\bar{\tilde{z}}^1,\bar{\tilde{z}}^2,\bar{\zeta},\bar{\kappa}^1,\bar{\kappa}^2,\bar{\tilde{\kappa}}^1,\bar{\tilde{\kappa}}^2,m,\alpha,p,q^1,q^2,s,r,P,u)\\
		&\quad-\mathcal{H}(t)\Big\vert\mathcal{F}^{Y}_t\Big]\geqslant0,\quad a.e.\,\,t\in[0,T],\quad\mathbb{P}\mbox{-}a.s..
	\end{aligned}
\end{equation}

The main result in this paper is the following theorem.

\begin{theorem}\label{theorem SMP}
	Suppose Assumptions \ref{assumption A1 in Zheng-Shi}, \ref{assumption A2 in Zheng-Shi}, \ref{assumption A3 in Zheng-Shi} and \ref{assumption of l(t)} hold. Let $\bar{u}$ be an optimal control, and $(\bar{x},\bar{y},\bar{z}^1,\bar{z}^2,\bar{\tilde{z}}^1,\bar{\tilde{z}}^2,\bar{\kappa}^1,\bar{\kappa}^2,\bar{\tilde{\kappa}}^1,\bar{\tilde{\kappa}}^2)$ be the corresponding solution to FBSDEP \eqref{equation state x y zeta in section 3}. Then the optimal condition \eqref{equation optimal condition} holds, where $(m,n^1,n^2,\tilde{n}^1,\tilde{n}^2)$ satisfies \eqref{equation first order decouple m,n}, $(\alpha,\beta^1,\beta^2,\tilde{\beta}^1,\tilde{\beta}^2)$ satisfies \eqref{equation first order decouple alpha beta}, $r$ satisfies \eqref{equation adjoint r}, $(s,p,q^1,q^2,\tilde{q}^1,\tilde{q}^2)$ satisfies \eqref{equation adjoint s p q} and $(P,Q^1,Q^2,\tilde{Q}^1,\tilde{Q}^2)$ satisfies \eqref{equation adjoint P Q}.
\end{theorem}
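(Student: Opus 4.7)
The plan is to combine the expansion $J(u^\epsilon)-J(\bar{u})=\zeta^1_0+\zeta^2_0+o(\epsilon)$ coming from Lemma \ref{lemma estimate of zeta2,hat zeta 3} with two successive applications of It\^o's formula to identify a variational inequality of Hamiltonian type, and then extract a pointwise optimality condition by exploiting the freedom in $\bar{t}$ and the spike value $u$.

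First, I would apply It\^o's formula to $r_t(\zeta^1_t+\zeta^2_t)+s_t(y^1_t+y^2_t)-p_t(x^1_t+x^2_t)$, using the dynamics of the variational system \eqref{equation zeta1+zeta2} together with the adjoint equations \eqref{equation adjoint r} and \eqref{equation adjoint s p q}. The first-order-in-$(x^1+x^2,y^1+y^2,\zeta^1+\zeta^2)$ terms are engineered to cancel, so that only the terminal contribution $\tfrac12(r_T\varphi_{xx}+s_T\phi_{xx})(x^1_T)^2$, the quadratic forms in $\tilde\Xi$, $\tilde\Xi_2$, $\tilde\Xi_3$ coming from the second-order Taylor remainders, and the $E_\epsilon$-localized spike contributions $\delta b,\delta g,\delta l,\delta\sigma_i$ survive. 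Next, I apply It\^o's formula to $P_t(x^1_t)^2$ using \eqref{equation adjoint P Q}: by construction the quadratic forms in $(1,m,K_i,\tilde K_i,K'_i,\tilde K'_i)$ precisely absorb the quadratic remainders above, while $P_T(x^1_T)^2$ cancels the terminal term. The cross term $q^i_t[\delta\sigma_{ix}(t)x^1_t+\delta\sigma_{iy}(t)y^1_t]\mathbbm{1}_{E_\epsilon}(t)$ is $o(\epsilon)$ because $|x^1_t|+|y^1_t|=O(\sqrt\epsilon)$ uniformly and $|E_\epsilon|\le\epsilon$. This yields the variational inequality \eqref{equation variation inequality}.

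Recognizing the integrand on $E_\epsilon$ as $\mathcal{H}(t,\bar\Xi,m,\alpha,p,q^i,s,r,P,u)-\mathcal{H}(t)$, with $\mathcal{H}$ defined in \eqref{Hamiltonian function}, one rewrites \eqref{equation variation inequality} as \eqref{equation variation inequality 2}. Dividing by $\epsilon$ and passing to the limit $\epsilon\downarrow0$, together with Lebesgue's differentiation theorem, the arbitrariness of the bounded $\mathcal{F}^Y_{\bar t}$-measurable $U$-valued spike $u$, and the tower property of conditional expectation (using that admissible controls in $\mathcal{U}_{ad}[0,T]$ are $\mathbb{F}^Y$-progressive), produces the inequality
\[
\mathbb{E}\Big[\bigl(\mathcal{H}(t,\bar\Xi,m,\alpha,p,q^1,q^2,s,r,P,u)-\mathcal{H}(t)\bigr)\mathbbm{1}_{\Gamma_M}(t)\Big|\mathcal{F}^Y_t\Big]\ge 0,\qquad a.e.\,t\in[0,T],\ \mathbb{P}\mbox{-}a.s.,
\]
for every truncation level $M\ge1$.

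The main obstacle is the final removal of the truncation set $\Gamma_M$. The localization is forced by the quadratic-exponential structure: the unbounded factors $\bar\kappa^i$ and related BMO terms prevent working directly on $[0,T]$, and Lemma \ref{lemma estimate of zeta1 hat zeta2} and Lemma \ref{lemma estimate of zeta2,hat zeta 3} only deliver the desired orders $O(\epsilon^p)$ and $o(\epsilon^2)$ on $E_\epsilon=[\bar t,\bar t+\epsilon]\cap\Gamma_M$. I would handle this by noting that $\mathbbm{1}_{\bigcup_{M\ge1}\Gamma_M}(t)=1$ for $a.e.\,t\in[0,T]$ by the very definition of $\Gamma_M$, and then invoking monotone convergence (together with the uniform integrability guaranteed by the $\mathcal{S}^p$--$\mathbb{H}^p$--$\mathbb{J}^p_i$ estimates of all adjoint and variational processes for every $p\ge2$) to let $M\to\infty$ inside the conditional expectation. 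This yields the optimality condition \eqref{equation optimal condition} and completes the proof of Theorem \ref{theorem SMP}.
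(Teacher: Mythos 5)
Your proposal is correct and follows essentially the same route as the paper: the same two It\^o computations (for $r_t(\zeta^1_t+\zeta^2_t)+s_t(y^1_t+y^2_t)-p_t(x^1_t+x^2_t)$ and for $P_t(x^1_t)^2$), the same treatment of the $q^i_t[\delta\sigma_{ix}x^1_t+\delta\sigma_{iy}y^1_t]$ cross term, the same identification of the Hamiltonian, and the same removal of the truncation via $\mathbbm{1}_{\cup_{M\geqslant1}\Gamma_M}(t)=1$ a.e. The only cosmetic difference is that you invoke monotone convergence to let $M\to\infty$, whereas for a.e.\ fixed $t$ one simply picks an $M$ with $t\in\Gamma_M$ and reads off the inequality directly, as the paper does.
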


\begin{remark}
	In the controlled system \eqref{equation state x y zeta in section 3} and \eqref{cost functional zeta0}, if we set the jump-related terms to $0$ as well as $y$, $z^i$, $\tilde{z}^i$ and $g$, further, denote the Hamiltonian function in (3.61) of \cite{HJX22} by $H(\cdot)$, then Theorem \ref{theorem SMP} degenerates into Theorem 3.16 of \cite{HJX22}. Actually, it follows from It\^o's formula that $\mathcal{H}(\cdot)=rH(\cdot)$, where $\mathcal{H}(\cdot)$ is defined in \eqref{Hamiltonian function} and $r$ in \eqref{equation adjoint r}.
\end{remark}

\section{Applications to an optimal investment model}

In this section, we consider a risk-sensitive optimal investment model in which the goal is to maximize the exponential utility of wealth. In this model, the mean return of the stock is explicitly affected by the underlying economic factor (see Fleming and Sheu \cite{FS00}, Nagai \cite{Nagai01}, Davis and Lleo \cite{DL13}).

Let $(\Omega,\mathcal{F},\mathbb{F},\bar{\mathbb{P}})$ be a fixed complete filtered probability space, on which is defined two independent processes: a $\mathbb{R}^{2}$-valued standard Brownian motion $W_t\equiv(W^1_t,W^2_t)$ and $1$-dimensional Poisson random measure $N(de,dt)$ with a stationary compensator $\nu(de)dt$, where $\tilde{N}(de,dt)\coloneqq N(de,dt)-\nu(de)dt$ is the compensated martingale measure.
In the financial market, suppose that there is a $1$-dimensional stock whose price $S$ is affected by a $1$-dim factor process $X$. Their dynamics are given by
\begin{equation}\label{equation securities}
	\frac{dS_t}{S_{t-}}=(a_1+A_1X_t)dt+\sigma dW_t+\int_E\tilde{\sigma}\tilde{N}(de,dt),\quad S_0=s\in\mathbb{R},
\end{equation}
and
\begin{equation}\label{equation factors}
	dX_t=(a_2+A_2X_t)dt+\Lambda dW_t+\int_{E}\tilde{\Lambda}\tilde{N}(de,dt),\quad X_0=x\in\mathbb{R},
\end{equation}
where $a_1,a_2,A_1,A_2,\tilde{\sigma},\tilde{\Lambda}\in\mathbb{R}$ are constants, $\sigma\equiv[\sigma_1,\sigma_2],\Lambda\equiv[\Lambda_1,\Lambda_2]$ are $1\times2$ constant vectors with $\sigma\sigma^\top>0$.

Let $u$ be the investment strategy on the stock $S$, for some investor. The observable information until time $t\geq0$ for him/her is $\mathcal{F}^S_t:=\sigma(S_r;r\leqslant t)$. Then the log stock price $Y_t=\ln S_t$ satisfies:
\begin{equation}
		dY_t=(a_3+A_1X_t)dt+\sigma dW_t+\int_E\ln(1+\tilde{\sigma})\tilde{N}(de,dt),\quad Y_0=0,
\end{equation}
where $a_3\coloneqq a_1-\frac{1}{2}\sigma\sigma^\top+\int_E[\ln(1+\tilde{\sigma})-\tilde{\sigma}]\nu(de)$. The admissible strategy set is defined as
\begin{equation*}
	\mathcal{U}_{ad}=\left\{u:[0,T]\times\Omega\to\mathbb{R}_+ \big| u\mbox{ is $\mathcal{F}^S$-predictable} \right\}.
\end{equation*}
For each strategy $u\in\mathcal{U}_{ad}$, one can define the investor's wealth process $V$ by
\begin{equation}
		\frac{dV_t}{V_{t-}}=u_t\left[(a_1+A_1X_t)dt+\sigma dW_t+\int_E\tilde{\sigma}\tilde{N}(de,dt)\right],\quad V(0)=v.
\end{equation}
We consider the risk-averse investor, that is $\theta>0$, whose target is to minimize the risk-sensitive utility
\begin{equation}
	J'(u)=\bar{\mathbb{E}}\big[e^{-\theta\ln V_T}\big].
\end{equation}

As in Section 3, by introducing
\begin{equation*}
	\begin{aligned}
		\Gamma_T:=\exp\left\{-\int_0^T(\sigma\sigma^\top)^{-1}\sigma(a_3+A_1X_t)dW_t-\frac{1}{2}\int_0^T(\sigma\sigma^\top)^{-1}(a_3+A_1X_t)^2dt\right\},
	\end{aligned}
\end{equation*}
we can define a new probability $\mathbb{P}$ by
\begin{equation*}
	\frac{d\mathbb{P}}{d\bar{\mathbb{P}}}=\Gamma_T,
\end{equation*}
under which $d\hat{W}_t\coloneqq dW_t+\sigma^\top(\sigma\sigma^\top)^{-1}(a_3+A_1X_t)dt$ is a new Brownian motion and $\tilde{N}$ is a Poisson martingale measure.

Setting $a_4:=a_2-\Lambda\sigma^\top(\sigma\sigma^\top)^{-1}a_3$ and $A_4:=A_2-\Lambda\sigma^\top(\sigma\sigma^\top)^{-1}A_1$, aforementioned problem is equal to minimize $J(u)=\zeta_0$, with respect to
\begin{equation}\label{equation state factor invest}
	\left\{
	\begin{aligned}
		dX_t&=(a_4+A_4X_t)dt+\Lambda d\hat{W}_t+\int_E\tilde{\Lambda}\tilde{N}(de,dt),\\
		\frac{dV_t}{V_{t-}}&=u_t\left[(a_1-a_3)dt+\sigma d\hat{W}_t+\int_E\tilde{\sigma}\tilde{N}(de,dt)\right],\\
		-d\zeta_t&=\left\{\frac{\theta}{2}(\kappa_t)^2+\frac{1}{\theta}\int_E\big(e^{\theta\tilde{\kappa}_{(t,e)}}-\theta\tilde{\kappa}_{(t,e)}-1\big)\nu(de)+\sigma^\top(\sigma\sigma^\top)^{-1}(a_3+A_1X_t)\kappa_t\right\}dt\\
		&\quad-\kappa_td\hat{W}_t-\int_E\tilde{\kappa}_{(t,e)}\tilde{N}(de,dt),\\
		X_0&=x,\quad V_0=v,\quad\zeta_T=-\ln V_T.
	\end{aligned}
	\right.
\end{equation}

As an application of Theorem \ref{theorem SMP}, the optimal condition is
\begin{equation}\label{optimal condition in Application}
	\mathbb{E}\Big[p_t(a_1-a_3)\bar{V}_t+\left\{q_t-r_t\alpha_t\left[\theta\bar{\kappa}_t+\sigma^\top(\sigma\sigma^\top)^{-1}(a_3+A_1\bar{X}_t)\right]\right\}^\top\sigma \bar{V}_t\Big|\mathcal{F}^S_t\Big]=0,
\end{equation}
$a.e.\,\,t\in[0,T],\mathbb{P}\mbox{-}a.s.$, where $r$, $p$ and $\alpha$ are first-order adjoint processes given by
\begin{equation}
	\left\{
	\begin{aligned}
		dr_t&=r_t\left[\theta\bar{\kappa}_t+\sigma^\top(\sigma\sigma^\top)^{-1}(a_3+A_1\bar{X}_t)\right]d\hat{W}_t+\int_Er_{t-}\big(e^{\theta\bar{\tilde{\kappa}}_{(t,e)}}-1\big)\tilde{N}(de,dt),\\
		r_0&=1,
	\end{aligned}
	\right.
\end{equation}
\begin{equation}
	\left\{
	\begin{aligned}
		-dp_t&=\left[(a_1-a_3)\bar{u}_tp_t+\sigma\bar{u}_tq_t+\int_E\tilde{\sigma}\bar{u}_t\tilde{q}_{(t,e)}\nu(de)\right]dt-q_td\hat{W}_t-\int_E\tilde{q}_{(t,e)}\tilde{N}(de,dt),\\
		p_T&=-\frac{r_T}{\bar{V}_T},
	\end{aligned}
	\right.
\end{equation}
and
\begin{equation}
	\left\{
	\begin{aligned}
		-d\alpha_t&=\left\{(a_1-a_3)\bar{u}_t\alpha_t+\sigma\bar{u}_t\beta_t+\int_E\tilde{\sigma}\bar{u}_t\tilde{\beta}_{(t,e)}\nu(de)\right.\\
		&\qquad +(\beta_t+\alpha_t\sigma\bar{u}_t)\left[\theta\bar{\kappa}_t+\sigma^\top(\sigma\sigma^\top)^{-1}(a_3+A_1\bar{X}_t)\right]\\
		&\qquad \left.+\int_E\big(e^{\theta\bar{\tilde{\kappa}}_{(t,e)}}-1\big)\big[\tilde{\sigma}\bar{u}_t(\alpha_t+\tilde{\beta}_{(t,e)})+\tilde{\beta}_{(t,e)}\big]\nu(de)\right\}dt\\
        &\quad -\beta_t d\hat{W}_t-\int_E\tilde{\beta}_{(t,e)}\tilde{N}(de,dt),\\
		\alpha_T&=-\frac{1}{\bar{V}_T},
	\end{aligned}
	\right.
\end{equation}
respectively, with optimal $\bar{X},\bar{V},\bar{\kappa},\bar{\tilde{\kappa}}$ and $\bar{u}$.

By It\^{o}'s formula, the following relations
\begin{equation}\label{relation of adjoint in Application}
	p_t=\alpha_t r_t,\quad q_t=\left[\theta\bar{\kappa}_t+\sigma^\top(\sigma\sigma^\top)^{-1}(a_3+A_1\bar{X}_t)\right]\alpha_t r_t+\beta_t r_t
\end{equation}
can be verified. Combining \eqref{relation of adjoint in Application}, the optimal condition \eqref{optimal condition in Application} can be rewritten as
\begin{equation}\label{equation optimal condition transed}
	\begin{aligned}
		\mathbb{E}\big[\big\{\alpha_t(a_1-a_3)+\beta_t\sigma\big\}r_t \bar{V}_t\big|\mathcal{F}^S_t\big]=0,\quad a.e.\,\,t\in[0,T],\quad\mathbb{P}\mbox{-}a.s..
	\end{aligned}
\end{equation}

To solve $\alpha$, we introduce process $\eta$ by
\begin{equation}
	\left\{
	\begin{aligned}
		d\eta_t&=\mathcal{B}_1(t)\eta_tdt+\mathcal{B}_2(t)\eta_td\hat{W}_t+\int_E\mathcal{B}_3(t-,e)\eta_{t-}\tilde{N}(de,dt),\\
		\eta_0&=1,
	\end{aligned}
	\right.
\end{equation}
where
\begin{equation*}
	\left\{
	\begin{aligned}
		\mathcal{B}_1(t)&:=(a_1-a_3)\bar{u}_t+\int_E\big(e^{\theta\bar{\tilde{\kappa}}_{(t,e)}}-1\big)\tilde{\sigma}\bar{u}_t\nu(de),\\
		\mathcal{B}_2(t)&:=\sigma\bar{u}_t+\theta\bar{\kappa}_t+\sigma^\top(\sigma\sigma^\top)^{-1}(a_3+A_1\bar{X}_t),\\
		\mathcal{B}_3(t,e)&:=\tilde{\sigma}\bar{u}_t+\big(e^{\theta\bar{\tilde{\kappa}}_{(t,e)}}-1\big)(\tilde{\sigma}\bar{u}_t+1).
	\end{aligned}
	\right.
\end{equation*}
In fact, $\eta$ has the following explicit form:
\begin{equation}
	\begin{aligned}
		\eta_t=&\exp\left\{\int_0^t\left\{\mathcal{B}_1(s)-\frac{1}{2}\mathcal{B}_2(s)+\int_E\big[\ln(1+\mathcal{B}_3(s-,e))-\mathcal{B}_3(s-,e)\big]\right\}\right.dt\\
		&\qquad \left.+\int_0^t\mathcal{B}_2(s)d\hat{W}_s+\int_0^t\int_E\ln(1+\mathcal{B}_3(s-,e))\tilde{N}(de,ds)\right\}.
	\end{aligned}
\end{equation}
Then, it follows from It\^o's formula of $\eta_t\alpha_t$ that $\alpha$ is given by
\begin{equation}\label{equation alpha explicit}
		\alpha_t=\eta_t^{-1}\mathbb{E}\Big[-\frac{\eta_T}{\bar{V}_T}\,\Big|\mathcal{F}_t\Big].
\end{equation}

\begin{remark}
	$\alpha$ has an implicit dependence on the optimal strategy $\bar{u}$ according to \eqref{equation alpha explicit}. Optimal condition \eqref{equation optimal condition transed} can also be regarded as a constraint of $\bar{u}$. However, caused by the non-linear feature of \eqref{equation state factor invest}, it is rather difficult to get the explicit representation of $\bar{u}$ from \eqref{equation optimal condition transed}.
\end{remark}

\section{Risk-sensitive filtering and modified Zakai equation}

In this section, we study the risk-sensitive stochastic filtering problem, which involves both Brownian and Poissonian correlated noises. In fact, a typical approach in literature to tackle stochastic optimal control problems with partial observation is to separate the control and estimation tasks. However, it is well known that this separation generally fails to deliver an actual optimal solution for risk-sensitive stochastic optimal control problems. We deal with the risk-sensitive stochastic filtering problem of a general controlled jump-diffusion process $x$, when the observation process $Y$ is a correlated jump-diffusion process that has common jump times with $x$. The central goal is to characterize the conditional distribution of $x$ with respect to given observations $\mathcal{F}^Y_t\coloneqq\sigma\{Y_s,0\leqslant s\leqslant t\}$. The main result in this section can be regarded as a partial generalization of Nagai \cite{Nagai01} under the Poisson jump formulation and an extension of Germ and Gy{\"o}ngy \cite{GG22} to the risk-sensitive filtering.

Let $x$ be the unobservable controlled jump-diffusion process and $Y$ be the observable component. Under the same formulation of Section \ref{section Problem formulation and preliminaries}, we consider the following partially observed system driven by
\begin{equation}
	\left\{
	\begin{aligned}
		dx_t&=b_1(t,x_t,u_t)dt+\sigma_1(t,x_t,u_t)dW^1_t+\sigma_2(t,x_t,u_t)d\tilde{W}^2_t\\
		&\quad+\int_{E_1}f_1(t,x_{t-},u_t,e)\tilde{N}_1(de,dt)+\int_{E_2}f_2(t,x_{t-},u_t,e)\tilde{N}'_2(de,dt),\\
		x_0&=x,
	\end{aligned}
	\right.
\end{equation}
and
\begin{equation}
	\left\{
	\begin{aligned}
		dY_t&=b_2(t,x_t,u_t)dt+\sigma_3(t)d\tilde{W}^2_t+\int_{E_2}f_3(t,e)\tilde{N}'_2(de,dt),\\
		Y_0&=0,
	\end{aligned}
	\right.
\end{equation}
where, under probability measure $\bar{\mathbb{P}}$,  $W^1$ and $\tilde{W}^2$ are two one-dimensional independent standard Brownian motions, and $\tilde{N}_1(de,dt)=N_1(de,dt)-\nu_1(de)dt$ and $\tilde{N}^{\prime}_2(de,dt)=N_2(de,dt)-\lambda(t,x_{t-},e)\nu_2(de)dt$ are compensated Poisson random measures. In the above, the admissible control
\begin{equation*}
	\begin{aligned}
		u\in\mathcal{U}_{ad}[0,T]\coloneqq&\bigg\{u\Big|u_t\mbox{ is }\mathcal{F}_t^Y\mbox{-progressive }U\mbox{-valued process, such that }\sup_{0\leqslant t\leqslant T}\bar{\mathbb{E}}|u_t|^p<\infty,\\
		&\mbox{ for any $p>1$ }\mbox{and }\bar{\mathbb{E}}\int_0^T|u_t|^2N_i(E_i,dt)<\infty,\mbox{\ for\ } i=1,2\bigg\}.
	\end{aligned}
\end{equation*}
We consider the risk-sensitive cost functional
\begin{equation}\label{cost in jump filter origin}
	J(u)=\bar{\mathbb{E}}\left[\exp\left\{\theta\int_0^Tl(t,x_t,u_t)dt+\theta\varphi(x_T)\right\}\right].
\end{equation}

Note that the above problem is a special case of (\ref{state equation x y}), (\ref{observation}) and (\ref{cf1}), without the backward components. As in Section \ref{section Problem formulation and preliminaries}, by introducing
\begin{equation}
	\left\{
	\begin{aligned}
		d\tilde{\Gamma}_t&=\tilde{\Gamma}_t\sigma_3^{-1}(t)b_2(t,x_t,u_t)dW^2_t+\int_{E_2}\tilde{\Gamma}_{t-}(\lambda(t,x_{t-},e)-1)\tilde{N}_2(de,dt),\\
		\Gamma_0&=1,
	\end{aligned}
	\right.
\end{equation}
where
\begin{equation}
	\left\{
	\begin{aligned}
		&dW^2_t=d\tilde{W}^2_t+\sigma_3^{-1}(t)b_2(t,x_t,u_t)dt,\\
		&\tilde{N}_2(de,dt)=\tilde{N}'_2(de,dt)+(\lambda(t,x_{t-},e)-1)\nu_2(de)dt,
	\end{aligned}
	\right.
\end{equation}
we can define a new probability measure $\mathbb{P}$ through
\begin{equation}
	\frac{d\bar{\mathbb{P}}}{d\mathbb{P}}\Big|_{\mathcal{F}_t}=\tilde{\Gamma}_t.
\end{equation}

Under $\mathbb{P}$, $W^1$, $W^2$ are independent standard Brownian motions, and $\tilde{N}_1$, $\tilde{N}_2$ are compensated Poisson random measures. Notice that cost functional \eqref{cost in jump filter origin} can be rewritten as
\begin{equation}
	\begin{aligned}
		J(u)&=\mathbb{E}\left[\tilde{\Gamma}_Te^{\theta\int_0^Tl(t,x_t,u_t)dt}e^{\theta\varphi(x_T)}\right]\\
		&=\mathbb{E}\left[\mathbb{E}\left[\tilde{\Gamma}_Te^{\theta\int_0^Tl(t,x_t,u_t)dt}e^{\theta\varphi(x_T)}\,\big|\mathcal{F}^Y_T\right]\right].
	\end{aligned}
\end{equation}

For any $F(\cdot)\in C^2_b(\mathbb{R}^d)$, and $u\in\mathcal{U}_{ad}[0,T]$, consider the unnormalized conditional probability
\begin{equation}
	\mu^u_t(F):=\mathbb{E}\left[\tilde{\Gamma}_te^{\theta\int_0^tl(s,x_s,u_s)ds}F(x_t)\,\big|\mathcal{F}^Y_t\right],
\end{equation}
then we have
\begin{equation}
	J(u)=\mathbb{E}\big[\mu^u_T(e^{\theta\varphi})\big].
\end{equation}

Then we have the following result.
\begin{theorem}(Modified Zakai equation)\label{theorem zakai}
	For any $F(\cdot)\in C^2_b(\mathbb{R}^d)$ and $u\in\mathcal{U}_{ad}[0,T]$, $\mu^u_t(F)$ defined above satisfies the following SPDE:
\begin{equation}\label{equation zakai}
	\begin{aligned}
		\mu^u_t(F)&=\mu^u_0(F)+\int_0^t\mu^u_s(\mathcal{L}_sF)ds+\int_0^t\mu^u_s(\mathcal{M}_sF)dW^2_s+\int_0^t\int_{E_2}\mu^u_{s-}\big(\mathcal{I}^{\lambda,f_2}_{s-}F\big)\tilde{N}_2(de,ds)\\
		&\quad +\int_0^t\int_{E_1}\mu^u_s\big(\mathcal{A}^{1,f_1}_sF\big)\nu_1(de)ds+\int_0^t\int_{E_2}\mu^u_s\big(\mathcal{A}^{\lambda,f_2}_sF\big)\nu_2(de)ds,
	\end{aligned}
\end{equation}
where
\begin{equation*}
	\begin{aligned}
		\mathcal{L}_tF&\coloneqq Fl+\frac{\partial F}{\partial x}b_1+\frac{1}{2}\frac{\partial^2 F}{\partial x^2}(\sigma^2_1+\sigma^2_2),\quad\mathcal{M}_tF\coloneqq \sigma_3^{-1}b_2F+\sigma_2\frac{\partial F}{\partial x},\\
		\mathcal{A}^{\xi,f}_tF&\coloneqq \xi\left[F(\cdot+f)-F(\cdot)-\frac{\partial F}{\partial x}f\right],\quad \mathcal{I}^{\xi,f}_tF\coloneqq \xi\left[F(\cdot+f)-F(\cdot)\right]+(\xi-1)F.
	\end{aligned}
\end{equation*}
\end{theorem}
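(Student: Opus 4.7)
The plan is to establish \eqref{equation zakai} via the classical reference-measure approach to nonlinear filtering: compute the $\mathbb{P}$-semimartingale decomposition of $\rho_t F(x_t)$ with $\rho_t \coloneqq \tilde{\Gamma}_t e^{\theta\int_0^t l(s,x_s,u_s)\,ds}$, so that $\mu^u_t(F)=\mathbb{E}[\rho_t F(x_t)\mid\mathcal{F}^Y_t]$, and then project each summand onto $\mathcal{F}^Y_t$.

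Since $e^{\theta\int_0^\cdot l\,ds}$ is continuous of finite variation, $[\tilde{\Gamma},e^{\theta\int l\,ds}]\equiv 0$ and the SDE for $\tilde{\Gamma}$ yields
\begin{equation*}
d\rho_t = \theta l(t)\rho_t\,dt + \rho_t\sigma_3^{-1}(t)b_2(t)\,dW^2_t + \int_{E_2}\rho_{t-}\bigl(\lambda(t,x_{t-},e)-1\bigr)\tilde{N}_2(de,dt).
\end{equation*}
Rewriting the state SDE for $x$ in terms of the $\mathbb{P}$-martingales $W^2$ and $\tilde{N}_2$ via $d\tilde{W}^2 = dW^2-\sigma_3^{-1}b_2\,dt$ and $\tilde{N}'_2 = \tilde{N}_2-(\lambda-1)\nu_2(de)\,dt$, the drift of $x$ picks up a correction $-\sigma_2\sigma_3^{-1}b_2 - \int_{E_2}f_2(\lambda-1)\nu_2$ and the compensator of $N_2$ becomes $\nu_2(de)\,dt$. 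It\^o's formula for semimartingales with jumps applied to $F(x_t)$, followed by It\^o's product formula for $\rho_t F(x_t)$, produces a continuous cross-variation $\rho_t\sigma_3^{-1}b_2\sigma_2 F_x(x_t)\,dt$, which exactly cancels the $\sigma_2\sigma_3^{-1}b_2$-correction in $x$'s drift through $F_x$, together with a pure-jump cross-variation contributing $\int_{E_2}\rho_{t-}(\lambda-1)[F(x_{t-}+f_2)-F(x_{t-})]N_2(de,dt)$ whose compensator combines with the remaining $\nu_2$-drift terms through the algebraic identity
\begin{equation*}
[F(\cdot+f_2)-F-F_x f_2] - (\lambda-1)F_x f_2 + (\lambda-1)[F(\cdot+f_2)-F] = \lambda[F(\cdot+f_2)-F-F_x f_2],
\end{equation*}
yielding $\rho_t\mathcal{A}^{\lambda,f_2}_tF$ in the drift and $\rho_{t-}\mathcal{I}^{\lambda,f_2}_{t-}F$ as the $\tilde{N}_2$-integrand. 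Collecting every term then gives
\begin{equation*}
\begin{aligned}
d(\rho_t F(x_t)) &= \rho_t(\mathcal{L}_tF)\,dt + \rho_t\!\int_{E_1}\!(\mathcal{A}^{1,f_1}_tF)\nu_1(de)\,dt + \rho_t\!\int_{E_2}\!(\mathcal{A}^{\lambda,f_2}_tF)\nu_2(de)\,dt\\
&\quad + \rho_t F_x(x_t)\sigma_1(t)\,dW^1_t + \rho_t(\mathcal{M}_tF)\,dW^2_t\\
&\quad + \int_{E_1}\!\rho_{t-}[F(x_{t-}+f_1)-F(x_{t-})]\tilde{N}_1(de,dt) + \int_{E_2}\!\rho_{t-}(\mathcal{I}^{\lambda,f_2}_{t-}F)\tilde{N}_2(de,dt).
\end{aligned}
\end{equation*}

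Taking $\mathbb{E}[\cdot\mid\mathcal{F}^Y_t]$, the $dW^1$- and $\tilde{N}_1$-martingale terms vanish because $W^1$ and $N_1$ are $\mathbb{P}$-independent of $\mathcal{F}^Y_t$ and of $(x_0,W^2,N_2)$, so their stochastic integrals against $\mathbb{F}$-adapted integrands project to zero. Fubini handles the $dt$- and $\nu_i(de)\,dt$-integrals, producing $\mu^u_s(\mathcal{L}_sF)$, $\mu^u_s(\mathcal{A}^{1,f_1}_sF)$, and $\mu^u_s(\mathcal{A}^{\lambda,f_2}_sF)$. For the $dW^2$- and $\tilde{N}_2$-integrals I would invoke the filtering commutation lemma, in its Brownian form from Bain--Crisan \cite{BC09} and its jump extension from Ceci--Colaneri \cite{CC12} and Germ--Gy\"ongy \cite{GG22}, pulling the conditional expectation inside each integral to produce $\mu^u_s(\mathcal{M}_sF)\,dW^2_s$ and $\mu^u_{s-}(\mathcal{I}^{\lambda,f_2}_{s-}F)\tilde{N}_2(de,ds)$.

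The main obstacle is this last projection step: in the presence of the state-dependent intensity $\lambda(t,x_{t-},e)$, under $\mathbb{P}$ the observation $Y$ has a finite-variation part $-\int_{E_2}f_3(\lambda-1)\nu_2(de)\,dt$ depending on the unobservable $x$, so $\mathcal{F}^Y_t$ is strictly smaller than the filtration generated by $(W^2,N_2)$. One must therefore justify carefully that the conditional expectation of an $\mathbb{F}$-adapted integrand against $dW^2_t$ or $\tilde{N}_2(de,dt)$ admits a Kunita--Watanabe-type projection onto an $\mathcal{F}^Y$-adapted stochastic integral; this is precisely the content of the projection theorems in \cite{GG22}, and verifying that Assumption \ref{assumption A3 in Zheng-Shi} together with the boundedness of the coefficients supplies the integrability required by those theorems is the technical heart of the argument.
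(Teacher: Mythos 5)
Your proposal follows essentially the same route as the paper: apply It\^o's formula to $\tilde{\Gamma}_t e^{\theta\int_0^t l\,ds}F(x_t)$ under the reference measure $\mathbb{P}$, identify the drift, $dW^2$, and $\tilde{N}_2$ contributions with $\mathcal{L}_tF$, $\mathcal{M}_tF$, $\mathcal{A}^{\xi,f}_tF$, $\mathcal{I}^{\lambda,f_2}_tF$, and then take the $\mathcal{F}^Y_t$-conditional expectation term by term. In fact you are somewhat more careful than the paper itself on the final projection step (the paper simply invokes the independence of $W^1$, $W^2$, $\tilde{N}_1$, $\tilde{N}_2$ to commute the conditional expectation with the stochastic integrals, whereas you correctly flag that the state-dependent intensity $\lambda(t,x_{t-},e)$ makes $\mathcal{F}^Y$ a priori smaller than $\mathcal{F}^{W^2,N_2}$ and that a projection theorem in the spirit of \cite{GG22} is needed there), so the argument is correct and, if anything, fills a gap the paper leaves implicit.
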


\begin{proof}
For any $F(\cdot)\in C^2_b(\mathbb{R}^d)$, by using It\^o's formula and taking the $\mathcal{F}^Y_t$-conditional expectation on both sides, we can deduce that
\begin{equation*}
	\begin{aligned}	
        &\mathbb{E}\left[\tilde{\Gamma}_te^{\theta\int_0^tl(s,x_s,u_s)ds}F(x_t)\,\big|\mathcal{F}^Y_t\right]=\mathbb{E}\left[\tilde{\Gamma}_0F(x_0)\,\big|\mathcal{F}^Y_t\right]\\
        &+\mathbb{E}\left[\int_0^t\tilde{\Gamma}_se^{\theta\int_0^sl(r,x_r,u_r)dr}\left\{\frac{\partial F(x_s)}{\partial x}b_1(s)
         +\frac{1}{2}\frac{\partial^2 F(x_s)}{\partial x^2}(\sigma_1^2(s)+\sigma_2^2(s))\right.+F(x_s)l(s,x_s,u_s)\}ds\,\bigg|\mathcal{F}^Y_t\right]\\
		&+\mathbb{E}\left[\int_0^t\tilde{\Gamma}_se^{\theta\int_0^sl(r,x_r,u_r)dr}\left\{\sigma_2(s)\frac{\partial F(x_s)}{\partial x}+\sigma_3^{-1}(s)b_2(s)F(x_s)\right\}dW^2_s\,\bigg|\mathcal{F}^Y_t\right]\\
    \end{aligned}
\end{equation*}	
\begin{equation*}
	\begin{aligned}		
        &+\mathbb{E}\left[\int_0^t\int_{E_2}\tilde{\Gamma}_{s-}e^{\theta\int_0^{s-}l(r,x_r,u_r)dr}\Big\{\lambda(s-)\big[F(x_{s-}+f_2(s,e))-F(x_{s-})\big]\right.\\
        &\qquad\qquad +(\lambda(s-)-1)F(x_{s-})\Big\}\tilde{N}_2(de,ds)\,\bigg|\mathcal{F}^Y_t\bigg]\\
		&+\mathbb{E}\left[\int_0^t\int_{E_1}\tilde{\Gamma}_se^{\theta\int_0^sl(r,x_r,u_r)dr}\left\{F(x_s+f_1(s,e))-F(x_s)-\frac{\partial F(x_s)}{\partial x}f_1(s,e)\right\}\nu_1(de)ds\,\bigg|\mathcal{F}^Y_t\right]\\
		&+\mathbb{E}\left[\int_0^t\int_{E_2}\tilde{\Gamma}_se^{\theta\int_0^sl(r,x_r,u_r)dr}\lambda(s)\left\{F(x_s+f_2(s))-F(x_s)-\frac{\partial F(x_s)}{\partial x}f_2(s,e)\right\}\nu_2(de)ds\,\bigg|\mathcal{F}^Y_t\right].
	\end{aligned}
\end{equation*}
It follows from the independence of $W^1$, $W^2$, $\tilde{N}_1$ and $\tilde{N}_2$ that
\begin{equation*}
	\begin{aligned}
		\mu^u_t(F)&=\mu^u_0(F)+\int_0^t\mathbb{E}\left[\tilde{\Gamma}_se^{\theta\int_{0}^{s}l(r,x_r,u_r)dr}\mathcal{L}_sF(x_s)\,\big|\mathcal{F}^Y_s\right]dt\\
        &\quad +\int_0^t\mathbb{E}\left[\tilde{\Gamma}_se^{\theta\int_0^sl(r,x_r,u_r)dr}\mathcal{M}_sF(x_s)\,\big|\mathcal{F}^Y_s\right]dW^2_s\\
		&\quad +\int_0^t\int_{E_2}\mathbb{E}\left[\tilde{\Gamma}_{s-}e^{\theta\int_0^{s-}l(r,x_r,u_r)dr}\mathcal{I}^{\lambda,f_2}_{s-}F(x_s)\,\big|\mathcal{F}^Y_s\right]\tilde{N}_2(de,dt)\\
		&\quad +\int_0^t\int_{E_1}\mathbb{E}\left[\tilde{\Gamma}_se^{\theta\int_0^sl(r,x_r,u_r)dr}\mathcal{A}^{1,f_1}_sF(x_s)\,\big|\mathcal{F}^Y_s\right]\nu_1(de)dt\\
        &\quad +\int_0^t\int_{E_2}\mathbb{E}\left[\tilde{\Gamma}_se^{\theta\int_0^sl(r,x_r,u_r)dr}\mathcal{A}^{\lambda,f_2}_sF(x_s)\,\big|\mathcal{F}^Y_s\right]\nu_2(de)dt\\
		&=\mu^u_0(F)+\int_0^t\mu^u_s(\mathcal{L}_sF)ds+\int_0^t\mu^u_s(\mathcal{M}_sF)dW^2_s+\int_0^t\int_{E_2}\mu^u_{s-}\big(\mathcal{I}^{\lambda,f_2}_{s-}F\big)\tilde{N}_2(de,ds)\\
		&\quad +\int_0^t\int_{E_1}\mu^u_s\big(\mathcal{A}^{1,f_1}_sF\big)\nu_1(de)ds+\int_0^t\int_{E_2}\mu^u_s\big(\mathcal{A}^{\lambda,f_2}_sF\big)\nu_2(de)ds.
	\end{aligned}
\end{equation*}
Thus we arrive at the modified Zakai equation \eqref{equation zakai}.
\end{proof}

\begin{remark}
	When $f_1,f_2,f_3\equiv0$, \eqref{equation zakai} is reduced to the Zakai equation (3.2) of \cite{Nagai01}, within the dynamic asset management for the factor model. When $l\equiv0$, \eqref{equation zakai} degenerates into the Zakai equation (2.6) in \cite{GG22}. Noting that in \cite{Nagai01}, Nagai gives an explicit representation to the solution of SPDE (3.2), which heavily depends on the fact that the random noise of SPDE (3.2) only comes from Brownian motion. Since \eqref{equation zakai} also depends on Poisson random measure, it is hard to obtain the solution to our modified Zakai equation (\ref{equation zakai}) explicitly.
\end{remark}

\section{Concluding remarks}

In this paper, we have derived a global maximum principle for partially observed progressive optimal control of forward-backward jump stochastic systems with risk-sensitive criteria. By introducing a special BSDEP with quadratic-exponential growth, the original problem can be transformed to a stochastic recursive optimal control problem, where the system is a controlled FBSDEP coupled with a Q$_{exp}$BSDEP. Our work is an extension of \cite{ZS23}, since our system is coupled FBSDEP, which, more importantly, couples with a Q$_{exp}$BSDEP. Our work is also an extension of \cite{HJX22,BLLW24} to the Poisson jump setting. Compared with \cite{HJX22,BLLW24}, where only quadratic generator of state equation is considered, we study the controlled system with quadratic-exponential generator. To estimate orders of variations, inspired by \cite{BLLW24}, some new tools are introduced to deal with difficulties caused by the quadratic-exponential feature. As an application, a risk-sensitive optimal investment model affected by the underlying economic factors, is studied. The risk-sensitive stochastic filtering problem is also studied. Both Brownian and Poissonian correlated noises are involved in our setting, and the modified Zakai equation is obtained.

\appendix

\setcounter{section}{0}

\renewcommand\theequation{A\arabic{equation}}

\section*{Appendix A: Proof of Lemma \ref{lemma of hat zeta 1}}\label{Appendix A}

\begin{proof}
	For each $p\geqslant2$, set $p_0\coloneqq p\bar{q}^2$. By Lemma \ref{lemma stochastic Lip BSDEP}, we have
\begin{equation*}
\begin{aligned}
	&\mathbb{E}\bigg[\sup_{0\leqslant t\leqslant T}|\hat{\zeta}^{i,1,\epsilon}_t|^p+\Big(\int^T_0|\hat{\kappa}^{i,1,\epsilon}_t|^2dt\Big)^{\frac{p}{2}}
+\Big(\int^T_0\int_{E_i}|\hat{\tilde{\kappa}}^{i,1,\epsilon}_{(t,e)}|^2N_i(de,dt)\Big)^{\frac{p}{2}}\bigg]\\
	&\leqslant C\biggl\{\mathbb{E}\bigg[|\tilde{\varphi}_x(T,0)\hat{x}^{1,\epsilon}_T+\tilde{\varphi}_y(T,0)\hat{y}^{1,\epsilon}_0|^{p_0}
+\Big[\int^T_0\Big(\tilde{l}_x(t)\hat{x}^{1,\epsilon}_t+\tilde{l}_y(t)\hat{y}^{1,\epsilon}_t+\tilde{l}_{z^i}(t)\hat{z}^{i,1,\epsilon}_t\\
	&\qquad\qquad+\tilde{l}_{\tilde{z}^i}(t)\int_{E_i}\hat{\tilde{z}}^{i,1,\epsilon}_{(t,e)}\nu_i(de)+\delta l(t)\mathbbm{1}_{E_\epsilon}(t)\Big)dt\Big]^{\beta_0}\bigg]\biggr\}^{\frac{p}{p_0}}\\
	&\leqslant C\biggl\{\bigg(\mathbb{E}\bigg[\sup_{0\leqslant t\leqslant T}|\hat{x}^{1,\epsilon}_t|^{p_0}+\sup_{0\leqslant t\leqslant T}|\hat{y}^{1,\epsilon}_t|^{p_0}\bigg]\bigg)^{\frac{p}{p_0}}+\bigg(\mathbb{E}\bigg(\int^T_0\tilde{l}_{z^i}(t)\hat{z}^{i,1,\epsilon}_tdt\bigg)^{p_0}\bigg)^{\frac{p}{p_0}}\\
	&\qquad+\bigg(\mathbb{E}\bigg(\int^T_0\int_{E_i}\tilde{l}_{\tilde{z}^i}(t)\hat{\tilde{z}}^{i,1,\epsilon}_{(t,e)}\nu_i(de)dt\bigg)^{p_0}\bigg)^{\frac{p}{p_0}}
+\bigg(\mathbb{E}\bigg(\int^T_0\delta l(t)\mathbbm{1}_{[\bar{t},\bar{t}+\epsilon]}(t)dt\bigg)^{p_0}\bigg)^{\frac{p}{p_0}}\biggr\}\\
	&\leqslant C\biggl\{\bigg(\mathbb{E}\bigg[\sup_{0\leqslant t\leqslant T}|\hat{x}^{1,\epsilon}_t|^{p_0}+\sup_{0\leqslant t\leqslant T}|\hat{y}^{1,\epsilon}_t|^{p_0}\bigg]\bigg)^{\frac{p}{p_0}}
+\bigg(\mathbb{E}\bigg(\int^T_0|\hat{z}^{i,1,\epsilon}_t|^2dt\bigg)^{\frac{p_0}{2}}\bigg)^{\frac{p}{p_0}}\\
	&\qquad+\bigg(\mathbb{E}\bigg(\int^T_0\int_{E_i}|\hat{\tilde{z}}^{i,1,\epsilon}_{(t,e)}|^2\nu_i(de)dt\bigg)^{\frac{p_0}{2}}\bigg)^{\frac{p}{p_0}}
+\bigg(\mathbb{E}\bigg(\int^T_0\delta l(t)\mathbbm{1}_{[\bar{t},\bar{t}+\epsilon]}(t)dt\bigg)^{p_0}\bigg)^{\frac{p}{p_0}}\biggr\}\\
	&\leqslant C\epsilon^{\frac{p}{2}}+C\epsilon^{\frac{p}{2}}\bigg(\mathbb{E}\bigg(\int_{\bar{t}}^{\bar{t}+\epsilon}|\delta l(t)|^2dt\bigg)^{\frac{p_0}{2}}\bigg)^{\frac{p}{p_0}}.
\end{aligned}
\end{equation*}
Noticing the $|\delta l(t)|\leqslant C\big(1+|\bar{y}|+|\bar{z}^i|+\int_{E_i}|\bar{\tilde{z}}^i|\nu_i(de)+|\bar{\kappa}^i|+\int_{E_i}|\bar{\tilde{\kappa}}^i|\nu_i(de)\big)$, and using the energy inequality, we can further get
\begin{equation}
	\begin{aligned}
		\mathbb{E}\bigg[\sup_{0\leqslant t\leqslant T}|\hat{\zeta}^{1,\epsilon}_t|^p+\Big(\int^T_0|\hat{\kappa}^{i,1,\epsilon}_t|^2dt\Big)^{\frac{p}{2}}
+\Big(\int^T_0\int_{E_i}|\hat{\tilde{\kappa}}^{i,1,\epsilon}_{(t,e)}|^2N_i(de,dt)\Big)^{\frac{p}{2}}\bigg]\leqslant C\epsilon^{\frac{p}{2}}.
	\end{aligned}
\end{equation}
For $p\in(1,2)$, it follows immediately from H\"{o}lder's inequality. The proof is complete.
\end{proof}

\section*{Appendix B: Proof of Lemma \ref{lemma of x2 y2 z2 etc}}

\begin{proof}
	The first estimate follows from classical techniques. However, we should note that we fail to make the order reach $O(\epsilon^2)$ due to the appearance of $\mathbb{E}\big(\int_0^Tq^i_t\delta\sigma_i(t)\mathbbm{1}_{[\bar{t},\bar{t}+\epsilon]}dt\big)^2$ term, and so does the second estimate. Then, we give the proof of the third estimate directly.
    Let
	\begin{equation}\label{notation4}
		\begin{aligned}
			&\hat{x}_t^{3,\epsilon}\coloneqq x^\epsilon_t-\bar{x}_t-x_t^1-x_t^2,\quad \hat{y}_t^{3,\epsilon}\coloneqq y^\epsilon_t-\bar{y}_t-y_t^1-y_t^2,\\
			&\hat{z}_t^{i,3,\epsilon}\coloneqq z^{i,\epsilon}_t-\bar{z}^i_t-z_t^{i,1}-z_t^{i,2},\quad \hat{\tilde{z}}_{(t,e)}^{i,3,\epsilon}\coloneqq\tilde{z}^{i,\epsilon}_{(t,e)}-\bar{\tilde{z}}^i_{(t,e)}-\tilde{z}_{(t,e)}^{i,1}-\tilde{z}_{(t,e)}^{i,2},\quad i=1,2,
		\end{aligned}
	\end{equation}
	and recall $\hat{x}_t^{1,\epsilon},\hat{y}_t^{1,\epsilon},\hat{z}_t^{i,1,\epsilon},\hat{\tilde{z}}_{(t,e)}^{i,1,\epsilon},\hat{x}_t^{2,\epsilon},\hat{y}_t^{2,\epsilon},\hat{z}_t^{i,2,\epsilon}$ and $\hat{\tilde{z}}_{(t,e)}^{i,2,\epsilon}$, $i=1,2$. Further, we define
	\begin{equation*}
		\begin{aligned}
			\tilde{\Upsilon}_{xx}^\epsilon(t)&\coloneqq 2\int_0^1\int_0^1\theta \Upsilon_{xx}(t,\bar{x}+\lambda\theta(x^{\epsilon}-\bar{x}),u^\epsilon)d\lambda d\theta,\mbox{ for }\Upsilon=b,\sigma_1,\sigma_2,f_1,f_2,\\
		    \widetilde{D^2g}(t)&\coloneqq 2\int_0^1\int_0^1\theta D^2g\big(t,\Theta(t,\Delta^1\mathbbm{1}_{[\bar{t},\bar{t}+\epsilon]}(t),\Delta^2\mathbbm{1}_{[\bar{t},\bar{t}+\epsilon]}(t))+\lambda\theta(\Theta^\epsilon(t)\\
			&\qquad\qquad\qquad\quad -\Theta(t,\Delta^1\mathbbm{1}_{[\bar{t},\bar{t}+\epsilon]}(t),\Delta^2\mathbbm{1}_{[\bar{t},\bar{t}+\epsilon]}(t))),u^\epsilon\big)d\lambda d\theta,
		\end{aligned}
	\end{equation*}
where $\Theta(\cdot,\Delta^1,\Delta^2)\equiv\big[\bar{x},\bar{y},\bar{z}^1+\Delta^1,\bar{z}^2+\Delta^2,\bar{\tilde{z}}^1,\bar{\tilde{z}}^2\big]$ and
$$
\widetilde{\phi}^{\epsilon}_{xx}(T)\coloneqq 2\int_0^1\int_0^1\theta \phi_{xx}(\bar{x}_T+\hat{\tilde{z}}\theta(x_T^\epsilon-\bar{x}_T))d\hat{\tilde{z}} d\theta,
$$
then we have the following equation:
\begin{equation*}\left\{
\begin{aligned}
    d\hat{x}_t^{3,\epsilon}&=\Big[b_{x}(t)\hat{x}_t^{3,\epsilon}+b_{y}(t)\hat{y}_t^{3,\epsilon}+b_{z^i}(t)\hat{z}_t^{i,3,\epsilon}
    +b_{\tilde{z}^i}(t)\int_{E_i}\hat{\tilde{z}}_{(t,e)}^{i,3,\epsilon}\nu_i(de)+B_4^\epsilon(t)\Big]dt\\
	&\quad+\Big[\sigma_{ix}(t)\hat{x}_t^{3,\epsilon}+\sigma_{iy}(t)\hat{y}_t^{3,\epsilon}+C_{i4}^\epsilon(t)\Big]dW^i_t\\%
    &\quad+\int_{E_i}\Big[f_{ix}(t,e)\hat{x}_{t-}^{3,\epsilon}+f_{iy}(t,e)\hat{y}_{t-}^{3,\epsilon}+D_{i4}^\epsilon(t,e)\Big]\tilde{N}_i(de,dt),\\
	d\hat{y}_t^{3,\epsilon}&=-\Big[g_x(t)\hat{x}_t^{3,\epsilon}+g_y(t)\hat{y}_t^{3,\epsilon}+g_{z^i}(t)\hat{z}_t^{i,3,\epsilon}
	+g_{\tilde{z}^i}(t)\int_{E_i}\hat{\tilde{z}}_{(t,e)}^{i,3,\epsilon}\nu_i(de)+A_4^\epsilon(t)\Big]dt\\
	&\quad +\hat{z}_t^{i,3,\epsilon}dW^i_t+\int_{E_i}\hat{\tilde{z}}_{(t,e)}^{i,3,\epsilon}\tilde{N}_i(de,dt),\quad t\in[0,T],\\
	\hat{x}_0^{3,\epsilon}&=0,\quad \hat{y}_T^{3,\epsilon}=\phi_x(\bar{x}_T)\hat{x}_T^{3,\epsilon}+\frac{1}{2}\widetilde{\phi}^{\epsilon}_{xx}(T)(\hat{x}_T^{1,\epsilon})^2-\frac{1}{2}\phi_{xx}(\bar{x}_T)(x_T^1)^2,
\end{aligned}
\right.\end{equation*}
where
	\begin{equation*}
	\begin{aligned}
		B_4^\epsilon(t)&\coloneqq\Bigl\{\delta b_x(t,\Delta^1,\Delta^2)\hat{x}_t^{1,\epsilon}+\delta b_y(t,\Delta^1,\Delta^2)\hat{y}_t^{1,\epsilon}
		+\delta b_{z^i}(t,\Delta^1,\Delta^2)(\hat{z}_t^{i,1,\epsilon}-\Delta^i(t)\mathbbm{1}_{[\bar{t},\bar{t}+\epsilon]})\\
		&\qquad +\delta b_{\tilde{z}^i}(t,\Delta^1,\Delta^2)\int_{E_i}\hat{\tilde{z}}_{(t,e)}^{i,1,\epsilon}\nu_i(de)\Bigr\}\mathbbm{1}_{[\bar{t},\bar{t}+\epsilon]}\\
        &\quad+\frac{1}{2}\check{\Xi}_2(t)\widetilde{D^2b}(t)\check{\Xi}_2(t)^\top-\frac{1}{2}\tilde{\Xi}_2(t)D^2g(t)\tilde{\Xi}_2(t)^\top,\\
        C_{i4}^\epsilon(t)&\coloneqq\big[\delta \sigma_{ix}(t)\hat{x}_t^{2,\epsilon}+\delta \sigma_{iy}(t)\hat{y}_t^{2,\epsilon}\big]\mathbbm{1}_{[\bar{t},\bar{t}+\epsilon]}
        +\frac{1}{2}\big[\hat{x}^{1,\epsilon}_t,\hat{y}^{1,\epsilon}_t\big]\widetilde{D^2\sigma_i}(t)\big[\hat{x}^{1,\epsilon}_t,\hat{y}^{1,\epsilon}_t\big]^\top\\
        &\quad -\frac{1}{2}\big[x^1_t,y^1_t\big]D^2\sigma_i(t)\big[x^1_t,y^1_t\big]^\top,\\
		\end{aligned}
	\end{equation*}
    \begin{equation*}
	    \begin{aligned}
		D_{i4}^\epsilon(t,e)&\coloneqq\big[\delta f_{ix}(t,e)\hat{x}_t^{1,\epsilon}+\delta f_{iy}(t,e)\hat{y}_t^{1,\epsilon}\big]\mathbbm{1}_{\mathcal{O}}
        +\frac{1}{2}\big[\hat{x}^{1,\epsilon}_t,\hat{y}^{1,\epsilon}_t\big]\widetilde{D^2f_i}(t)\big[\hat{x}^{1,\epsilon}_t,\hat{y}^{1,\epsilon}_t\big]^\top\\
		&\quad-\frac{1}{2}\big[x^1_t,y^1_t\big]D^2f_i(t)\big[x^1_t,y^1_t\big]^\top+\delta f_i(t,e)\mathbbm{1}_{\mathcal{O}},\\
        A_4^\epsilon(t)&\coloneqq\Bigl\{\delta g_x(t,\Delta^1,\Delta^2)\hat{x}_t^{1,\epsilon}+\delta g_y(t,\Delta^1,\Delta^2)\hat{y}_t^{1,\epsilon}
		+\delta g_{z^i}(t,\Delta^1,\Delta^2)(\hat{z}_t^{i,1,\epsilon}-\Delta^i(t)\mathbbm{1}_{[\bar{t},\bar{t}+\epsilon]})\\
		&\qquad +\delta g_{\tilde{z}^i}(t,\Delta^1,\Delta^2)\int_{E_i}\hat{\tilde{z}}_{(t,e)}^{i,1,\epsilon}\nu_i(de)\Bigr\}\mathbbm{1}_{[\bar{t},\bar{t}+\epsilon]}\\
		&\quad +\frac{1}{2}\check{\Xi}_2(t)\widetilde{D^2g}(t)\check{\Xi}_2(t)^\top-\frac{1}{2}\tilde{\Xi}_2(t)D^2g(t)\tilde{\Xi}_2(t)^\top,\\
		\check{\Xi}_2&\coloneqq\Big[\hat{x}^{1,\epsilon},\hat{y}^{1,\epsilon},\hat{z}^{1,1,\epsilon}
		-\Delta^1\mathbbm{1}_{[\bar{t},\bar{t}+\epsilon]},\hat{z}^{2,1,\epsilon}
		-\Delta^2\mathbbm{1}_{[\bar{t},\bar{t}+\epsilon]},\int_{E_1}\hat{\tilde{z}}^{1,1,\epsilon}\nu_1(de),\int_{E_2}\hat{\tilde{z}}^{2,1,\epsilon}\nu_2(de)\Big],\\
		\tilde{\Xi}_2&\coloneqq\Big[x^1,y^1,z^{1,1}-\Delta^1\mathbbm{1}_{[\bar{t},\bar{t}+\epsilon]},z^{2,1}-\Delta^2\mathbbm{1}_{[\bar{t},\bar{t}+\epsilon]},
        \int_{E_1}\tilde{z}^{1,1}\nu_1(de),\int_{E_2}\tilde{z}^{2,1}\nu_2(de)\Big].
		\end{aligned}
	\end{equation*}
	By Proposition \ref{theorem solvability xyz}, the $L^p$-estimate of FBSDEP, we have
	\begin{equation*}
		\begin{aligned}
			&\mathbb{E}\bigg[\sup_{0\leqslant t\leqslant T}\Big(|\hat{x}^{3,\epsilon}_t|^2+|\hat{y}^{3,\epsilon}_t|^2\Big)+\int_0^T|\hat{z}^{i,3,\epsilon}_t|^2dt
			+\int_0^T\int_{E_i}|\hat{\tilde{z}}^{i,3,\epsilon}_{(t,e)}|^2\nu_i(de)dt\bigg]\\
            &\leqslant C \mathbb{E}\bigg[\Big|\frac{1}{2}\widetilde{\phi}^\epsilon_{xx}(T)(\hat{x}_T^{1,\epsilon})^2-\frac{1}{2}\phi_{xx}(\bar{x}_T)(x_T^1)^2\Big|^2
			+\Big(\int_0^TA_4^\epsilon(t)dt\Big)^2+\Big(\int_0^TB^\epsilon_1(t)dt\Big)^2\\
			&\qquad\quad +\int_0^T|C^\epsilon_{i1}(t)|^2dt+\int_0^T\int_{E_i}|D^\epsilon_{i1}(t,e)|^2N_i(de,dt)\bigg].
		\end{aligned}
	\end{equation*}
Notice that
\begin{equation*}
	\begin{aligned}
			&\mathbb{E}\bigg[\Big|\frac{1}{2}\widetilde{\phi}^\epsilon_{xx}(T)\big[(\hat{x}_T^{1,\epsilon})^2-(x_T^1)^2\big]
			+\frac{1}{2}\big[\widetilde{\phi}^\epsilon_{xx}(T)-\phi_{xx}(\bar{x}_T)\big](x_T^1)^2\Big|^2\bigg]\\
			&\leqslant C\biggl\{\mathbb{E}\bigg[\Big|\frac{1}{2}\widetilde{\phi}^\epsilon_{xx}(T)\hat{x}_T^{2,\epsilon}(\hat{x}_T^{1,\epsilon}+x_T^1)
			+\frac{1}{2}\big[\widetilde{\phi}^\epsilon_{xx}(T)-\phi_{xx}(\bar{x}_T)\big](x_T^1)^2\Big|^2\bigg]\biggr\}\\
			&\leqslant C\biggl\{\mathbb{E}\Big[|\hat{x}_T^{2,\epsilon}|^2|\hat{x}_T^{1,\epsilon}+x_T^1|^2\Big]+\mathbb{E}\Big[[\widetilde{\phi}^\epsilon_{xx}(T)-\phi_{xx}(\bar{x}_T)]^2(x_T^1)^4\Big]\biggr\}\\
			&\leqslant C\biggl\{\biggl\{\mathbb{E}\bigg[\sup_{0\leqslant t\leqslant T}|\hat{x}_t^{2,\epsilon}|^4\bigg]\biggr\}^\frac{1}{2}\biggl\{\mathbb{E}
            \bigg[\sup_{0\leqslant t\leqslant T}|\hat{x}_t^{1,\epsilon}+x_t^1|^4\bigg]\biggr\}^\frac{1}{2}\\
			&\qquad +\biggl\{\mathbb{E}\Big[[\widetilde{\phi}^{\epsilon}_{xx}(T)-\phi_{xx}(\bar{x}_T)]^4\Big]\biggr\}^\frac{1}{2}
			\biggl\{\mathbb{E}\bigg[\sup_{0\leqslant t\leqslant T}|x_t^1|^8\bigg]\biggr\}^\frac{1}{2}\biggr\}=o(\epsilon^2).
		\end{aligned}
	\end{equation*}
	Next, we need to give the estimate of $\mathbb{E}\big(\int_0^TA_4^\epsilon(t)dt\big)^2$. In fact,
	\begin{equation}\label{A2}
		\begin{aligned}
			&\mathbb{E}\bigg(\int_0^TA_4^\epsilon(t)dt\bigg)^2=\mathbb{E}\bigg[\bigg(\int_0^T\bigg[\delta g_x(t,\Delta^1,\Delta^2)\hat{x}_t^{1,\epsilon}+\delta g_y(t,\Delta^1,\Delta^2)\hat{y}_t^{1,\epsilon}\\
			&\quad+\sum^2_{i=1}\Big[\delta g_{z^i}(t,\Delta^1,\Delta^2)(\hat{z}_t^{i,1,\epsilon}-\Delta^i(t)\mathbbm{1}_{[\bar{t},\bar{t}+\epsilon]})
			+\delta g_{\tilde{z}^i}(t,\Delta^1,\Delta^2)\int_{E_i}\hat{\tilde{z}}_{(t,e)}^{i,1,\epsilon}\nu_i(de)\Big]\bigg]\mathbbm{1}_{[\bar{t},\bar{t}+\epsilon]}\\
            &\quad +\frac{1}{2}\check{\Xi}_2(t)\widetilde{D^2g}(t)\check{\Xi}_2(t)^\top-\frac{1}{2}\tilde{\Xi}_2(t)D^2g(t)\tilde{\Xi}_2(t)^\top dt\bigg)^2\bigg].
		\end{aligned}
	\end{equation}
	We divide the right hand of \eqref{A2} into several parts to get the following estimates:
	\begin{equation*}
		\begin{aligned}
			&\mathbb{E}\bigg(\int_0^T\delta g_x(t,\Delta^1,\Delta^2)\hat{x}_t^{1,\epsilon}\mathbbm{1}_{[\bar{t},\bar{t}+\epsilon]}dt\bigg)^2
			\leqslant C \mathbb{E}\bigg(\int_{\bar{t}}^{\bar{t}+\epsilon}\hat{x}_t^{1,\epsilon}dt\bigg)^2
			\leqslant C\epsilon^2\mathbb{E}\bigg[\sup_{0\leqslant t\leqslant T}|\hat{x}_t^{1,\epsilon}|^2\bigg]\leqslant o(\epsilon^2),\\
		\end{aligned}
	\end{equation*}
    \begin{equation*}
		\begin{aligned}	
            &\mathbb{E}\bigg(\int_0^T\delta g_{z^i}(t,\Delta^1,\Delta^2)(\hat{z}_t^{i,1,\epsilon}-\Delta^i(t)\mathbbm{1}_{[\bar{t},\bar{t}+\epsilon]})\mathbbm{1}_{[\bar{t},\bar{t}+\epsilon]}dt\bigg)^2\\
			&\leqslant C \mathbb{E}\bigg(\int_{\bar{t}}^{\bar{t}+\epsilon}|\delta g_{z^i}(t,\Delta^1,\Delta^2)||(\hat{z}_t^{i,2,\epsilon}+K^i_1(t)x_t^1)|dt\bigg)^2
			\leqslant C \mathbb{E}\bigg(\int_{\bar{t}}^{\bar{t}+\epsilon}|\hat{z}_t^{i,2,\epsilon}+K^i_1(t)x_t^1|dt\bigg)^2\\
			&\leqslant C\epsilon \mathbb{E}\int_{\bar{t}}^{\bar{t}+\epsilon}|\hat{z}_t^{i,2,\epsilon}|^2dt
			+C\epsilon\mathbb{E}\bigg[\sup_{0\leqslant t\leqslant T}|x_t^1|^2\Big(\int_{\bar{t}}^{\bar{t}+\epsilon}|K^i_1(t)|^2dt\Big)\bigg]\\
			&\leqslant C\epsilon\mathbb{E}\int_0^T|\hat{z}_t^{i,2,\epsilon}|^2dt
            +C\epsilon\biggl\{\mathbb{E}\bigg[\sup_{0\leqslant t\leqslant T}|x_t^1|^4\bigg]\biggr\}^{\frac{1}{2}}\biggl\{\mathbb{E}\bigg(\int_{\bar{t}}^{\bar{t}+\epsilon}|K^i_1(t)|^2dt\bigg)^2\biggr\}^{\frac{1}{2}}
            \leqslant o(\epsilon^2),
		\end{aligned}
	\end{equation*}
	and similarly,
	\begin{equation*}
		\begin{aligned}
			&\mathbb{E}\bigg(\int_0^T\delta g_y(t,\Delta^1,\Delta^2)\hat{y}_t^{1,\epsilon}\mathbbm{1}_{[\bar{t},\bar{t}+\epsilon]}dt\bigg)^2\leqslant o(\epsilon^2)\\
			&\mathbb{E}\bigg(\int_0^T\delta g_{\tilde{z}^i}(t,\Delta^1,\Delta^2)\int_{E_i}\hat{\tilde{z}}_{(t,e)}^{i,1,\epsilon}\nu_i(de)\mathbbm{1}_{[\bar{t},\bar{t}+\epsilon]}dt\bigg)^2\leqslant o(\epsilon^2),\quad i=1,2.
		\end{aligned}
	\end{equation*}
	Then we consider the last part of \eqref{A2} as follows. In which, we have
	\begin{equation*}
		\begin{aligned}
			&\mathbb{E}\bigg(\int_0^T\widetilde{g}^\epsilon_{z^1z^1}(t)\big[\hat{z}_t^{1,1,\epsilon}-\Delta^1(t)\mathbbm{1}_{[\bar{t},\bar{t}+\epsilon]}\big]^2
			-g_{z^1z^1}(t)\big[z_t^{1,1}-\Delta^1(t)\mathbbm{1}_{[\bar{t},\bar{t}+\epsilon]}\big]^2dt\bigg)^2\\
			&\leqslant \mathbb{E}\bigg(\int_0^T\widetilde{g}^\epsilon_{z^1z^1}(t)\big[(\hat{z}_t^{1,1,\epsilon}-\Delta^1(t)\mathbbm{1}_{[\bar{t},\bar{t}+\epsilon]})^2
			-(z_t^{1,1}-\Delta^1(t)\mathbbm{1}_{[\bar{t},\bar{t}+\epsilon]})^2\big]\\
			&\qquad +(\widetilde{g}^\epsilon_{z^1z^1}(t)-g_{z^1z^1}(t))\big[z_t^{1,1}-\Delta^1(t)\mathbbm{1}_{[\bar{t},\bar{t}+\epsilon]}\big]^2dt\bigg)^2\\\
			&\leqslant \mathbb{E}\bigg(\int_0^T\Big(\widetilde{g}^\epsilon_{z^1z^1}(t)\hat{z}_t^{1,2,\epsilon}\big[\hat{z}_t^{1,2,\epsilon}+2K^1_1(t)x_t^1\big]
			+(\widetilde{g}^{\epsilon}_{z^1z^1}(t)-g_{z^1z^1}(t))\big[K^1_1(t)x_t^1\big]^2\Big)dt\bigg)^2\\
			&\leqslant C \mathbb{E}\bigg[\Big(\int_0^T\widetilde{g}^\epsilon_{z^1z^1}(t)\hat{z}_t^{1,2,\epsilon}\big[\hat{z}_t^{1,2,\epsilon}+2K^1_1(t)x_t^1\big]dt\Big)^2
			+\Big(\int_0^T(\widetilde{g}^\epsilon_{z^1z^1}(t)-g_{z^1z^1}(t))\big[K^1_1(t)x_t^1\big]^2dt\Big)^2\bigg]\\
			&\leqslant C \mathbb{E}\bigg(\int_0^T\hat{z}_t^{1,2,\epsilon}\big[\hat{z}_t^{1,2,\epsilon}+2K^1_1(t)x_t^1\big]dt\bigg)^2\\
            &\quad +\mathbb{E}\bigg[\sup_{0\leqslant t\leqslant T}|x_t^1|^4\Big(\int_0^T(\widetilde{g}^\epsilon_{z^1z^1}(t)-g_{z^1z^1}(t))|K^1_1(t)|^2dt\Big)^2\bigg]\\
	        &\leqslant C\mathbb{E}\bigg(\int_0^T|\hat{z}_t^{1,2,\epsilon}|^2dt\bigg)^2+C \mathbb{E}\bigg(\int_0^T\hat{z}_t^{1,2,\epsilon}K^1_1(t)x_t^1dt\bigg)^2\\\
			&\quad +\mathbb{E}\bigg[\sup_{0\leqslant t\leqslant T}|x_t^1|^4\Big(\int_0^T(\widetilde{g}^\epsilon_{z^1z^1}(t)-g_{z^1z^1}(t))|K^1_1(t)|^2dt\Big)^2\bigg]\\
            &\leqslant o(\epsilon^2)+C\mathbb{E}\bigg[\sup_{0\leqslant t\leqslant T}|x_t^1|^2\Big(\int_0^T\hat{z}_t^{1,2,\epsilon}K^1_1(t)dt\Big)^2\bigg]\\
			\end{aligned}
	\end{equation*}
	\begin{equation*}
		    \begin{aligned}
            &\quad +\mathbb{E}\bigg[\sup_{0\leqslant t\leqslant T}|x_t^1|^4\Big(\int_0^T(\widetilde{g}^\epsilon_{z^1z^1}(t)-g_{z^1z^1}(t))|K^1_1(t)|^2dt\Big)^2\bigg]\\
            &\leqslant o(\epsilon^2)+C\mathbb{E}\bigg[\sup_{0\leqslant t\leqslant T}|x_t^1|^2\Big(\int_0^T|\hat{z}_t^{1,2,\epsilon}|^2dt\Big)\Big(\int_0^T|K^1_1(t)|^2dt\Big)\bigg]\\
			&\quad +\biggl\{\mathbb{E}\bigg[\sup_{0\leqslant t\leqslant T}|x_t^1|^8\bigg]\biggr\}^{\frac{1}{2}}\bigg\{\mathbb{E}\bigg(\int_0^T(\widetilde{g}^{\epsilon}_{z^1z^1}(t)-g_{z^1z^1}(t))|K^1_1(t)|^2dt\bigg)^4\bigg\}^{\frac{1}{2}}\\
			&\leqslant o(\epsilon^2)+\biggl\{\mathbb{E}\bigg[\sup_{0\leqslant t\leqslant T}|x_t^1|^6\bigg]\biggr\}^{\frac{1}{3}}
\biggl\{\mathbb{E}\bigg(\int_0^T|\hat{z}_t^{1,2,\epsilon}|^2dt\bigg)^2\biggr\}^{\frac{1}{2}}
			\biggl\{\mathbb{E}\bigg(\int_0^T|K^1_1(t)|^2dt\bigg)^6\biggr\}^{\frac{1}{6}}\\
			&\quad +\epsilon^2\biggl\{\mathbb{E}\bigg(\int_0^T(\widetilde{g}^{\epsilon}_{z^1z^1}(t)-g_{z^1z^1}(t))|K^1_1(t)|^2dt\bigg)^4\biggr\}^{\frac{1}{2}}\leqslant o(\epsilon^2),
		\end{aligned}
	\end{equation*}
	and similarly,
	\begin{equation*}
		\begin{aligned}
			&\mathbb{E}\bigg(\int_0^T\Big(\widetilde{g}^\epsilon_{xx}(t)|\hat{x}_t^{1,\epsilon}|^2-g_{xx}(t)|x_t^1|^2\Big)dt\bigg)^2\leqslant o(\epsilon^2),\\
			&\mathbb{E}\bigg(\int_0^T\Big(\widetilde{g}^\epsilon_{yy}(t)|\hat{y}_t^{1,\epsilon}|^2-g_{yy}(t)|y_t^1|^2\Big)dt\bigg)^2\leqslant o(\epsilon^2),\\
			&\mathbb{E}\bigg(\int_0^T\Big(\widetilde{g}^\epsilon_{\tilde{z}^i\tilde{z}^i}(t)\Big|\int_{E_i}\hat{\tilde{z}}_{(t,e)}^{i,1,\epsilon}\nu_i(de)\Big|^2
			-g_{\tilde{z}^i\tilde{z}^i}(t)\Big|\int_{E_i}\tilde{z}_{(t,e)}^{i,1}\nu_i(de)\Big|^2\Big)dt\bigg)^2\leqslant o(\epsilon^2),\quad i=1,2,
		\end{aligned}
	\end{equation*}
	and the other cross terms have the same estimate. Using the same method, we can also deduce $\mathbb{E}\big(\int_0^TB_4^\epsilon(t)dt\big)^2=o(\epsilon^2)$.

Then, we give the estimates of $\mathbb{E}\int_0^T|C_{i4}^\epsilon(t)|^2dt$ and $\mathbb{E}\int_0^T\int_{E_i}|D_{i4}^\epsilon(t,e)|^2N_i(de,dt)$, respectively:
	\begin{equation*}
		\begin{aligned}
			&\mathbb{E}\int_0^T|C_{i4}^\epsilon(t)|^2dt=\mathbb{E}\int_0^T\Big|\big(\delta \sigma_{ix}(t)\hat{x}_t^{2,\epsilon}+\delta \sigma_{iy}(t)\hat{y}_t^{2,\epsilon}\big)\mathbbm{1}_{[\bar{t},\bar{t}+\epsilon]}
			+\frac{1}{2}\tilde{\sigma}^\epsilon_{ixx}(t)|\hat{x}_t^{1,\epsilon}|^2-\frac{1}{2}\sigma_{ixx}(t)(x^1_t)^2\\
			&\quad+\frac{1}{2}\tilde{\sigma}^{\epsilon}_{ixy}(t)|\hat{x}_t^{1,\epsilon}||\hat{y}^{1,\epsilon}_t|-\frac{1}{2}\sigma_{ixy}(t)|x^1_t||y^1_t|
            +\frac{1}{2}\tilde{\sigma}^{\epsilon}_{iyy}(t)|\hat{y}_t^{1,\epsilon}|^2-\frac{1}{2}\sigma_{iyy}(t)(y^1_t)^2\Big|^2dt\\
			&\leqslant C\mathbb{E}\int_0^T\Big|\big(|\delta \sigma_{ix}(t)|^2|\hat{x}_t^{2,\epsilon}|^2+|\delta \sigma_{iy}(t)|^2|\hat{y}_t^{2,\epsilon}|^2\big)\mathbbm{1}_{[\bar{t},\bar{t}+\epsilon]}
            +\big|\frac{1}{2}\tilde{\sigma}^\epsilon_{ixx}(t)(|\hat{x}_t^{1,\epsilon}|^2-|x^1_t|^2)\\
			&\qquad\qquad\quad +\frac{1}{2}(\tilde{\sigma}^{\epsilon}_{ixx}(t)-\sigma_{ixx}(t))(x^1_t)^2\big|^2
            +\big|\tilde{\sigma}^\epsilon_{ixy}(t)\hat{x}^{1,\epsilon}_t\hat{y}^{1,\epsilon}_t-\tilde{\sigma}^\epsilon_{ixy}(t)\hat{x}^{1,\epsilon}_ty^1_t
            +\tilde{\sigma}^\epsilon_{ixy}(t)\hat{x}^{1,\epsilon}_ty^1_t\\
            &\qquad\qquad\quad -\sigma_{ixy}(t)\hat{x}^{1,\epsilon}_ty^1_t+\sigma_{ixy}(t)\hat{x}^{1,\epsilon}_ty^1_t-\sigma_{ixy}(t)x^1_ty^1_t\big|^2\\
	        &\qquad\qquad\quad +\big|\frac{1}{2}\tilde{\sigma}^{\epsilon}_{iyy}(t)(|\hat{y}_t^{1,\epsilon}|^2-|y^1_t|^2)
			+\frac{1}{2}(\tilde{\sigma}^{\epsilon}_{iyy}(t)-\sigma_{iyy}(t))(y^1_t)^2\big|\Big|^2dt\\
            &\leqslant C\mathbb{E}\int_{\bar{t}}^{\bar{t}+\epsilon}\big(|\delta \sigma_{ix}(t)|^2|\hat{x}_t^{2,\epsilon}|^2+|\delta \sigma_{iy}(t)|^2|\hat{y}_t^{2,\epsilon}|^2\big)dt\\
			&\quad+\mathbb{E}\int_{\bar{t}}^{\bar{t}+\epsilon}\Big(\big|\frac{1}{2}\tilde{\sigma}^{\epsilon}_{ixx}(t)(|\hat{x}_t^{1,\epsilon}|^2-|x^1_t|^2)
			+\frac{1}{2}(\tilde{\sigma}^{\epsilon}_{ixx}(t)-\sigma_{ixx}(t))(x^1_t)^2\big|^2\\
            &\qquad\qquad+\big|\tilde{\sigma}^{\epsilon}_{ixy}(t)\hat{x}^{1,\epsilon}_t\hat{y}^{2,\epsilon}_t+(\tilde{\sigma}^{\epsilon}_{ixy}(t)
             -\sigma_{ixy}(t))\hat{x}^{1,\epsilon}_ty^1_t+\sigma_{ixy}(t)\hat{x}^{2,\epsilon}_ty^1_t\big|^2\\
			&\qquad\qquad+\big|\frac{1}{2}\tilde{\sigma}^{\epsilon}_{iyy}(t)(|\hat{y}_t^{1,\epsilon}|^2-|y^1_t|^2)
			+\frac{1}{2}(\tilde{\sigma}^{\epsilon}_{iyy}(t)-\sigma_{iyy}(t))(y^1_t)^2\big|^2\Big)dt\leqslant o(\epsilon^2),
		\end{aligned}
	\end{equation*}
	and
	\begin{equation*}
		\begin{aligned}
			&\mathbb{E}\int_0^T\int_{E_i}|D_{i4}^\epsilon(t,e)|^2N_i(de,dt)\\
			&=\mathbb{E}\int_0^T\int_{E_i}\Big|\big(\delta f_{ix}(t,e)\hat{x}_{t-}^{1,\epsilon}
             +\delta f_{iy}(t,e)\hat{y}_{t-}^{1,\epsilon}\big)\mathbbm{1}_{\mathcal{O}}+\frac{1}{2}\tilde{f}^\epsilon_{ixx}(t,e)|\hat{x}_{t-}^{1,\epsilon}|^2\\
			&\qquad\qquad
			-\frac{1}{2}f_{ixx}(t,e)(x^1_{t-})^2+\tilde{f}^{\epsilon}_{ixy}(t,e)|\hat{x}_{t-}^{1,\epsilon}||\hat{y}_{t-}^{1,\epsilon}|-f_{ixy}(t,e)x^1_{t-}y^1_{t-}\\
			&\qquad\qquad+\frac{1}{2}\tilde{f}^{\epsilon}_{iyy}(t,e)|\hat{y}_{t-}^{1,\epsilon}|^2-\frac{1}{2}f_{iyy}(t,e)(y^1_{t-})^2+\delta f_i(t,e)\mathbbm{1}_{\mathcal{O}}\Big|^2N_i(de,dt)\\
			&\leqslant\mathbb{E}\int_0^T\int_{E_i}\Big|\big(\delta f_{ix}(t,e)\hat{x}_{t-}^{1,\epsilon}+\delta f_{iy}(t,e)\hat{y}_{t-}^{1,\epsilon}\big)\mathbbm{1}_{\mathcal{O}}+\delta f_i(t,e)\mathbbm{1}_{\mathcal{O}}\\
			&\qquad\qquad+\frac{1}{2}\tilde{f}^\epsilon_{ixx}(t,e)\big(|\hat{x}_{t-}^{1,\epsilon}|^2-|x^1_{t-}|^2\big) +\frac{1}{2}\big(\tilde{f}^\epsilon_{ixx}(t,e)-f_{ixx}(t,e)\big)(x^1_{t-})^2\\
			&\qquad\qquad+\tilde{f}^{\epsilon}_{ixy}(t,e)\hat{x}^{1,\epsilon}_{t-}\hat{y}^{1,\epsilon}_{t-}-\tilde{f}^\epsilon_{ixy}(t,e)\hat{x}^{1,\epsilon}_{t-}y^{1}_{t-}
            +\tilde{f}^\epsilon_{ixy}(t,e)\hat{x}^{1,\epsilon}_{t-}y^{1}_{t-}\\
			&\qquad\qquad-f_{ixy}(t,e)\hat{x}^{1,\epsilon}_{t-}y^{1}_{t-}+f_{ixy}(t,e)\hat{x}^{1,\epsilon}_{t-}y^{1}_{t-}-f_{ixy}(t,e)x^1_{t-}y^1_{t-}\\
			&\qquad\qquad+\frac{1}{2}\tilde{f}^\epsilon_{iyy}(t,e)(|\hat{y}_{t-}^{1,\epsilon}|^2-|y^1_{t-}|^2) +\frac{1}{2}(\tilde{f}^\epsilon_{iyy}(t,e)-f_{iyy}(t,e))(y^1_{t-})^2\Big|^2N_i(de,dt)\\
			&\leqslant\mathbb{E}\int_0^T\int_{E_i}\Big[\big(|\delta f_{ix}(t,e)|^2|\hat{x}_{t-}^{1,\epsilon}|^2+|\delta f_{iy}(t,e)|^2|\hat{y}_{t-}^{1,\epsilon}|^2+|\delta f_i(t,e)|^2\big)\mathbbm{1}_{\mathcal{O}}\\
			&\qquad\qquad+\big|\frac{1}{2}\tilde{f}^\epsilon_{ixx}(t,e)(|\hat{x}_{t-}^{1,\epsilon}|^2-|x^1_{t-}|^2)\big|^2+\big|\frac{1}{2}(\tilde{f}^{\epsilon}_{ixx}(t,e)-f_{ixx}(t,e))(x^1_{t-})^2\big|^2\\
			&\qquad\qquad+\big|\tilde{f}^{\epsilon}_{ixy}(t,e)\hat{x}^{1,\epsilon}_{t-}\hat{y}^{2,\epsilon}_{t-}\big|^2+\big|(\tilde{f}^\epsilon_{ixy}(t,e)-f_{ixy}(t,e))\hat{x}^{1,\epsilon}_{t-}y^{1}_{t-}\big|^2
             +\big|f_{ixy}(t,e)\hat{x}^{2,\epsilon}_{t-}y^{1}_{t-}\big|^2\\
			&\qquad\qquad+\big|\frac{1}{2}\tilde{f}^\epsilon_{iyy}(t,e)(|\hat{y}_{t-}^{1,\epsilon}|^2-|y^1_{t-}|^2)\big|^2+\big|\frac{1}{2}(\tilde{f}^\epsilon_{iyy}(t,e)-f_{iyy}(t,e))(y^1_{t-})^2\big|^2\Big]N_i(de,dt)\\
            &\leqslant o(\epsilon^2).
		\end{aligned}
	\end{equation*}
	The proof is complete.
\end{proof}

\section*{Appendix C: Proof of Lemma \ref{lemma estimate of zeta1 hat zeta2}}

\begin{proof}
	Noting that $\zeta^1_t=\alpha_tx^1_t$, $\kappa^{i,1}_t=K'_i(t)x^1_t+\pi^i_t\mathbbm{1}_{[\bar{t},\bar{t}+\epsilon]}$, $\tilde{\kappa}^{i,1}_{(t,e)}=\tilde{K}'_i(t,e)x^1_{t-}$, by estimate of $x^1$, space of $\alpha$, we get, for $\forall p_0>p$,
	\begin{equation*}
		\begin{aligned}
			&\mathbb{E}\bigg[\Big(\int_0^T|\beta^i_tx^1_t|^2dt\Big)^{\frac{p}{2}}+\Big(\int_0^T\int_{E_i}|\tilde{\beta}^i_{(t,e)}x^1_t|^2N_i(de,dt)\Big)^{\frac{p}{2}}\bigg]\\
			&\leqslant C \mathbb{E}\bigg[\sup_{0\leqslant t\leqslant T}|x^1_t|^p\Big(\int_0^T|\beta^i_t|^2dt\Big)^{\frac{p}{2}}\bigg]
             +\mathbb{E}\bigg[\sup_{0\leqslant t\leqslant T}|x^1_t|^p\Big(\int_0^T\int_{E_i}|\tilde{\beta}^i_{(t,e)}|^2N_i(de,dt)\Big)^{\frac{p}{2}}\bigg]\\
			&\leqslant C \biggl\{\mathbb{E}\bigg[\sup_{0\leqslant t\leqslant T}|x^1_t|^{p_0}\bigg]\biggr\}^{\frac{p}{p_0}}
             \biggl\{\bigg(\mathbb{E}\bigg(\int_0^T|\beta^i_t|^2dt\bigg)^{\frac{pp_0}{2(p_0-p)}}\bigg)^{\frac{p_0-p}{p_0}}\\
            &\qquad+\bigg(\mathbb{E}\bigg(\int_0^T\int_{E_i}|\tilde{\beta}^i_{(t,e)}|^2N_i(de,dt)\bigg)^{\frac{pp_0}{2(p_0-p)}}\bigg)^{\frac{p_0-p}{p_0}}\biggr\}\leqslant C\epsilon^{\frac{p}{2}}.
		\end{aligned}
		\end{equation*}
Other terms can be estimated similar. Hence we deduce \eqref{estimate of zeta1}.

Next we consider the second estimate. Set $\hat{\zeta}^{2,\epsilon}\coloneqq\zeta^{\epsilon}-\bar{\zeta}-\zeta^1$, $\hat{\kappa}^{i,2,\epsilon}\coloneqq\kappa^{i,\epsilon}-\bar{\kappa}^i-\kappa^{i,1}$, $\hat{\tilde{\kappa}}^{2,\epsilon}\coloneqq\tilde{\kappa}^{i,\epsilon}-\bar{\tilde{\kappa}}^i-\tilde{\kappa}^{i,1}$, which satisfy
\begin{equation*}\label{equation of hat zeta 2}
	\left\{
	\begin{aligned}
		-d\hat{\zeta}^{2,\epsilon}_t&=\biggl\{l_x(t)\hat{x}^{2,\epsilon}_t+l_y(t)\hat{y}^{2,\epsilon}_t+l_{z^i}(t)\hat{z}^{i,2,\epsilon}_t
+l_{\tilde{z}^i}(t)\int_{E_i}\hat{\tilde{z}}^{i,2,\epsilon}_{(t,e)}\nu_i(de)+l_{\kappa^i}(t)\hat{\kappa}^{i,2,\epsilon}_t\\
		&\qquad+l_{\tilde{\kappa}^i}(t)\int_{E_i}\hat{\tilde{\kappa}}^{i,2,\epsilon}_{(t,e)}\nu_i(de)+A^{\epsilon}_1(t)\biggr\}dt-\hat{\kappa}^{i,2,\epsilon}_tdW^i_t
+\int_{E_i}\hat{\tilde{\kappa}}^{i,2,\epsilon}_{(t,e)}\tilde{N}_i(de,dt),\quad t\in[0,T],\\
		\hat{\zeta}^{2,\epsilon}_T&=\tilde{\varphi}_x(T,0)\hat{x}^{2,\epsilon}_T+\tilde{\varphi}_y(T,0)\hat{y}^{2,\epsilon}_0+[\tilde{\varphi}_x(T,0)-\varphi_x(\bar{x}_T,\bar{y}_0)]x^1_T
+[\tilde{\varphi}_y(T,0)-\varphi_y(\bar{x}_T,\bar{y}_0)]y^1_0,
	\end{aligned}
	\right.
\end{equation*}
where
\begin{equation*}
	\begin{aligned}
		A^{\epsilon}_1(t)&\coloneqq(\tilde{l}_x(t)-l_x(t))\hat{x}^{1,\epsilon}_t+(\tilde{l}_y(t)-l_y(t))\hat{y}^{1,\epsilon}_t+(\tilde{l}_{z^i}(t)-l_{z^i}(t))\hat{z}^{i,1,\epsilon}_t\\
		&\quad+(\tilde{l}_{\tilde{z}^i}(t)-l_{\tilde{z}^i}(t))\int_{E_i}\hat{\tilde{z}}^{i,1,\epsilon}_{(t,e)}\nu_i(de)+(\tilde{l}_{\kappa^i}(t)-l_{\kappa^i}(t))\hat{\kappa}^{i,1,\epsilon}_t
+(\tilde{l}_{\tilde{\kappa}^i}(t)-l_{\tilde{\kappa}^i}(t))\int_{E_i}\hat{\tilde{\kappa}}^{i,1,\epsilon}_{(t,e)}\nu_i(de)\\
		&\quad+\big(\delta l(t)+\beta_t\delta\sigma_i(t)+l_{z^i}(t)\Delta^i_t+l_{\kappa^i}(t)\pi^i_t\big)\mathbbm{1}_{[\bar{t},\bar{t}+\epsilon]}(t),
	\end{aligned}
\end{equation*}
and $\tilde{l}_x(t)$ is defined in \eqref{equation of hat x y zeta 1}.
	
	By Lemma \ref{lemma stochastic Lip BSDEP}, we have
	\begin{equation*}
		\begin{aligned}
			&\mathbb{E}\bigg[\sup_{0\leqslant t\leqslant T}|\hat{\zeta}^{2,\epsilon}_t|^p+\Big(\int_0^T|\hat{\kappa}^{i,2,\epsilon}_t|^2dt\Big)^{\frac{p}{2}}+\Big(\int_0^T\int_{E_i}|\hat{\tilde{\kappa}}^{i,2,\epsilon}_{(t,e)}|^2\nu_i(de,dt)\Big)^{\frac{p}{2}}\bigg]\\
			&\leqslant C\bigg(\mathbb{E}\bigg[\Big|\tilde{\varphi}_x(T,0)\hat{x}^{2,\epsilon}_T+\tilde{\varphi}_y(T,0)\hat{y}^{2,\epsilon}_0+[\tilde{\varphi}_x(T,0)-\varphi_x(\bar{x}_T,\bar{y}_0)]x^1_T\Big|^{p\bar{q}^2}\\
			&\qquad+\Big(\int_0^T\Big|l_x(t)\hat{x}^{2,\epsilon}_t+l_y(t)\hat{y}^{2,\epsilon}_t+l_{z^i}(t)\hat{z}^{i,2,\epsilon}_t
+l_{\tilde{z}^i}(t)\int_{E_i}\hat{\tilde{z}}^{i,2,\epsilon}_{(t,e)}\nu_i(de)+A^{\epsilon}_1(t)\Big|dt\Big)^{p\bar{q}^2}\bigg]\bigg)^{\frac{1}{\bar{q}^2}}\\
&\leqslant C\bigg(\mathbb{E}\bigg[\sup_{0\leqslant t\leqslant T}\Big(|\hat{x}^{2,\epsilon}_t|^{p\bar{q}^2}+|\hat{y}^{2,\epsilon}_t|^{p\bar{q}^2}\Big)
+\Big(\int_0^T|\hat{z}^{i,2,\epsilon}_t|^2dt\Big)^{\frac{p\bar{q}^2}{2}}+\Big(\int_0^T\int_{E_i}|\hat{\tilde{z}}^{i,2,\epsilon}_{(t,e)}|^2\nu_i(de)dt\Big)^{\frac{p\bar{q}^2}{2}}\\
			&\qquad+\big([\tilde{\varphi}_x(T,0)-\varphi_x(\bar{x}_T,\bar{y}_0)]x^1_T\big)^{p\bar{q}^2}+\Big(\int_0^TA^{\epsilon}_1(t)dt\Big)^{p\bar{q}^2}\bigg]\bigg)^{\frac{1}{\bar{q}^2}}\\
			&\leqslant C\bigg(\epsilon^{p\bar{q}^2}+\mathbb{E}\Big(\big(|\hat{x}^{1,\epsilon}_T|+|\hat{y}^{1,\epsilon}_0|\big)x^1_T\Big)^{p\bar{q}^2}
+\mathbb{E}\bigg(\int_0^TA^\epsilon_1(t)dt\bigg)^{p\bar{q}^2}\bigg)^{\frac{1}{\bar{q}^2}}\\
			&\leqslant C\Biggl\{\epsilon^{p\bar{q}^2}+\bigg(\mathbb{E}\bigg[\sup_{0\leqslant t\leqslant T}\big(|\hat{x}^{1,\epsilon}_t|^{2p\bar{q}^2}+|\hat{y}^{1,\epsilon}_t|^{2p\bar{q}^2}\big)\bigg]\bigg)^{\frac{1}{2}}
\bigg(\mathbb{E}\bigg[\sup_{0\leqslant t\leqslant T}|x^1_t|^{2p\bar{q}^2}\bigg]\bigg)^{\frac{1}{2}}\\
            &\qquad +\mathbb{E}\bigg(\int_0^TA^{\epsilon}_1(t)dt\bigg)^{p\bar{q}^2}\Biggr\}^{\frac{1}{\bar{q}^2}}
            \leqslant C\bigg(\epsilon^{p\bar{q}^2}+\mathbb{E}\bigg(\int_0^TA^{\epsilon}_1(t)dt\bigg)^{p\bar{q}^2}\bigg)^{\frac{1}{\bar{q}^2}}.
		\end{aligned}
	\end{equation*}
Next we estimate $\mathbb{E}\big(\int_0^TA^{\epsilon}_1(t)dt\big)^{p\bar{q}^2}$. It follows from the definition of $\tilde{l}_{\kappa^i}(t)$, $\tilde{l}_{\tilde{\kappa}^i}(t)$ and Assumption \ref{assumption of l(t)} that
\begin{equation*}
	\begin{aligned}
		&|\tilde{l}_{\kappa^i}(t)-l_{\kappa^i}(t)|+|\tilde{l}_{\tilde{\kappa}^i}(t)-l_{\tilde{\kappa}^i}(t)|\\
		&\leqslant|\tilde{l}_{\kappa^i}(t)-l_{\kappa^i}(t,\bar{\Xi}_t,u^{\epsilon}_t)|+|\delta l_{\kappa^i}(t)|\mathbbm{1}_{[\bar{t},\bar{t}+\epsilon]}+|\tilde{l}_{\tilde{\kappa}^i}(t)-l_{\tilde{\kappa}^i}(t,\bar{\Xi}_t,u^{\epsilon}_t)|+|\delta l_{\tilde{\kappa}^i}(t)|\mathbbm{1}_{[\bar{t},\bar{t}+\epsilon]}(t)\\
		&\leqslant C\bigg[|\hat{x}^{1,\epsilon}_t|+|\hat{y}^{1,\epsilon}_t|+|\hat{z}^{i,1,\epsilon}_t|+\int_{E_i}|\hat{\tilde{z}}^{i,1,\epsilon}_{(t,e)}|\nu_i(de)+|\hat{\kappa}^{i,1,\epsilon}_t|
+\int_{E_i}|\hat{\tilde{\kappa}}^{i,1,\epsilon}_{(t,e)}|\nu_i(de)\\
		&\qquad+\Big(1+|\bar{\kappa}^i_t|+\int_{E_i}|\bar{\tilde{\kappa}}^i_{(t,e)}|\nu_i(de)+|u_t|+|\bar{u}_t|\Big)\mathbbm{1}_{[\bar{t},\bar{t}+\epsilon]}(t)\bigg].
	\end{aligned}
\end{equation*}
Hence,
\begin{equation*}
	\begin{aligned}
		&\mathbb{E}\bigg(\int_0^T(\tilde{l}_{\kappa}(t)-l_{\kappa}(t))\hat{\kappa}^{i,1,\epsilon}_tdt\bigg)^{p\bar{q}^2}\\
		&\leqslant C\mathbb{E}\bigg[\Big(\int_0^T\Big[|\hat{x}^{1,\epsilon}_t|+|\hat{y}^{1,\epsilon}_t|+|\hat{z}^{i,1,\epsilon}_t|+\int_{E_i}|\hat{\tilde{z}}^{i,1,\epsilon}_{(t,e)}|\nu_i(de)
+|\hat{\kappa}^{i,1,\epsilon}_t|+\int_{E_i}|\hat{\tilde{\kappa}}^{i,1,\epsilon}_{(t,e)}|\nu_i(de)\\
		&\qquad\quad+\Big(1+|\bar{\kappa}^i_t|+\int_{E_i}|\bar{\tilde{\kappa}}^i_{(t,e)}|\nu_i(de)+|u_t|+|\bar{u}_t|\Big)\mathbbm{1}_{[\bar{t},\bar{t}+\epsilon]}(t)\Big]\hat{\kappa}^{i,1,\epsilon}_tdt\Big)^{p\bar{q}^2}\bigg]\\
		&\leqslant C\mathbb{E}\bigg[\Big(\int_0^T\Big[|\hat{x}^{1,\epsilon}_t|^2+|\hat{y}^{1,\epsilon}_t|^2+|\hat{z}^{i,1,\epsilon}_t|^2+\int_{E_i}|\hat{\tilde{z}}^{i,1,\epsilon}_{(t,e)}|^2\nu_i(de)
+|\hat{\kappa}^{i,1,\epsilon}_t|^2+\int_{E_i}|\hat{\tilde{\kappa}}^{i,1,\epsilon}_{(t,e)}|^2\nu_i(de)\\
		&\qquad\quad+\Big(1+|\bar{\kappa}^i_t|+\int_{E_i}|\bar{\tilde{\kappa}}^i_{(t,e)}|\nu_i(de)+|u_t|+|\bar{u}_t|\Big)\hat{\kappa}^{i,1,\epsilon}_t\mathbbm{1}_{[\bar{t},\bar{t}+\epsilon]}(t)\Big]dt\Big)^{p\bar{q}^2}\bigg]\\
&\leqslant C\mathbb{E}\bigg[\bigg(\sup_{0\leqslant t\leqslant T}(|\hat{x}^{1,\epsilon}_t|^2+|\hat{y}^{1,\epsilon}_t|^2)+\int_0^T|\hat{z}^{i,1,\epsilon}_t|^2dt+\int_0^T\int_{E_i}|\hat{\tilde{z}}^{i,1,\epsilon}_{(t,e)}|^2\nu_i(de)dt\\
		&\qquad\quad+\int_0^T|\hat{\kappa}^{i,1,\epsilon}_t|^2dt+\int_0^T\int_{E_i}|\hat{\tilde{\kappa}}^{i,1,\epsilon}_{(t,e)}|^2\nu_i(de)dt\\
		&\qquad\quad+\int_0^T\Big(1+|\bar{\kappa}^i_t|+\int_{E_i}|\bar{\tilde{\kappa}}^i_{(t,e)}|\nu_i(de)+|u_t|
+|\bar{u}_t|\Big)\hat{\kappa}^{i,1,\epsilon}_t\mathbbm{1}_{[\bar{t},\bar{t}+\epsilon]}(t)dt\bigg)^{p\bar{q}^2}\bigg]\\
&\leqslant C\biggl\{\epsilon^{p\bar{q}^2}+\mathbb{E}\bigg(\int_0^T\Big(1+|\bar{\kappa}^i_t|+\int_{E_i}|\bar{\tilde{\kappa}}^i_{(t,e)}|\nu_i(de)
+|u_t|+|\bar{u}_t|\Big)\hat{\kappa}^{i,1,\epsilon}_t\mathbbm{1}_{[\bar{t},\bar{t}+\epsilon]}(t)dt\bigg)^{p\bar{q}^2}\biggr\}\\
&\leqslant C\biggl\{\epsilon^{p\bar{q}^2}+\mathbb{E}\bigg(\sup_{0\leqslant t\leqslant T}(|u_t|+|\bar{u}_t|)\int_0^T|\hat{\kappa}^{i,1,\epsilon}_t|\mathbbm{1}_{[\bar{t},\bar{t}+\epsilon]}(t)dt\\
        &\qquad+\int_0^T\Big(|\bar{\kappa}^i_t|
+\int_{E_i}|\bar{\tilde{\kappa}}^i_{(t,e)}|\nu_i(de)\Big)\hat{\kappa}^{i,1,\epsilon}_t\mathbbm{1}_{[\bar{t},\bar{t}+\epsilon]}(t)dt\bigg)^{p\bar{q}^2}\biggr\}\\
		&\leqslant C\biggl\{\epsilon^{p\bar{q}^2}+\mathbb{E}\bigg[\Big(\sup_{0\leqslant t\leqslant T}(|u_t|+|\bar{u}_t|)\int_0^T|\hat{\kappa}^{i,1,\epsilon}_t|\mathbbm{1}_{[\bar{t},\bar{t}+\epsilon]}(t)dt\Big)^{p\bar{q}^2}
+\Big(\int_0^T|\bar{\kappa}^i_t||\hat{\kappa}^{i,1,\epsilon}_t|\mathbbm{1}_{[\bar{t},\bar{t}+\epsilon]}(t)dt\Big)^{p\bar{q}^2}\\
		&\qquad+\Big(\int_0^T\int_{E_i}|\bar{\tilde{\kappa}}^i_{(t,e)}||\hat{\kappa}^{i,1,\epsilon}_t|\mathbbm{1}_{[\bar{t},\bar{t}+\epsilon]}(t)\nu_i(de)dt\Big)^{p\bar{q}^2}\bigg]\biggr\}.
	\end{aligned}
\end{equation*}
Since $\mathbb{E}\big(\int_0^T|\bar{\kappa}^i_t|^2dt\big)^{p\bar{q}^2}<\infty$, it follows from the dominated convergence theorem that
$$
\varpi^{i}_1(\epsilon)\coloneqq\mathbb{E}\Big(\int_{\bar{t}}^{\bar{t}+\epsilon}|\bar{\kappa}^i_t|^2dt\Big)^{p\bar{q}^2}\downarrow0,\quad as\,\,\epsilon\downarrow0.
$$
Similarly,
$$
\varpi^{i}_2(\epsilon)\coloneqq\mathbb{E}\Big(\int_{\bar{t}}^{\bar{t}+\epsilon}\int_{E_i}|\bar{\tilde{\kappa}}^i_{(t,e)}|^2\nu_i(de)dt\Big)^{p\bar{q}^2}\downarrow0,\quad as\,\,\epsilon\downarrow0.
$$

A straight-forward argument gives only
\begin{equation*}\label{equation estimate not enough}
	\begin{aligned}
		&\mathbb{E}\bigg[\Big(\int_0^T|\bar{\kappa}^i_t||\hat{\kappa}^{i,1,\epsilon}_t|\mathbbm{1}_{[\bar{t},\bar{t}+\epsilon]}(t)dt\Big)^{p\bar{q}^2}
+\Big(\int_0^T\int_{E_i}|\bar{\tilde{\kappa}}^i_{(t,e)}||\hat{\kappa}^{i,1,\epsilon}_t|\mathbbm{1}_{[\bar{t},\bar{t}+\epsilon]}(t)\nu_i(de)dt\Big)^{p\bar{q}^2}\bigg]\\
		&\leqslant C\Biggl\{\mathbb{E}\bigg[\Big(\int_0^T|\bar{\kappa}^i_t|^2\mathbbm{1}_{[\bar{t},\bar{t}+\epsilon]}(t)dt\Big)^{\frac{p\bar{q}^2}{2}}\Big(\int_0^T|\hat{\kappa}^{i,1,\epsilon}_t|^2dt\Big)^{\frac{p\bar{q}^2}{2}}\bigg]\\
		&\qquad+\mathbb{E}\bigg[\Big(\int_0^T\int_{E_i}|\bar{\tilde{\kappa}}^i_{(t,e)}|^2\mathbbm{1}_{[\bar{t},\bar{t}+\epsilon]}(t)\nu_i(de)dt\Big)^{\frac{p\bar{q}^2}{2}}
\Big(\int_0^T\int_{E_i}|\hat{\tilde{\kappa}}^{i,1,\epsilon}_{(t,e)}|^2\nu_i(de)dt\Big)^{\frac{p\bar{q}^2}{2}}\bigg]\Biggr\}\\
		&\leqslant C \Biggl\{\bigg(\mathbb{E}\bigg(\int_0^T|\bar{\kappa}^i_t|^2\mathbbm{1}_{[\bar{t},\bar{t}+\epsilon]}(t)dt\bigg)^{p\bar{q}^2}\bigg)^{\frac{1}{2}}
\bigg(\mathbb{E}\bigg(\int_0^T|\hat{\kappa}^{i,1,\epsilon}_t|^2dt\bigg)^{p\bar{q}^2}\bigg)^{\frac{1}{2}}\\
		&\qquad+\bigg(\mathbb{E}\bigg(\int_0^T\int_{E_i}|\bar{\tilde{\kappa}}^i_{(t,e)}|^2\mathbbm{1}_{[\bar{t},\bar{t}+\epsilon]}(t)\nu_i(de)dt\bigg)^{p\bar{q}^2}\bigg)^{\frac{1}{2}}
\bigg(\mathbb{E}\bigg(\int_0^T\int_{E_i}|\hat{\tilde{\kappa}}^{i,1,\epsilon}_{(t,e)}|^2\nu_i(de)dt\bigg)^{p\bar{q}^2}\bigg)^{\frac{1}{2}}\Biggr\}\\
		&\leqslant C\Big(\sqrt{\varpi^i_1(\epsilon)}+\sqrt{\varpi^i_2(\epsilon)}\Big)\epsilon^{\frac{p\bar{q}^2}{2}},
	\end{aligned}
\end{equation*}
which is not the desired result since we need an estimate which leads to a convergence speed quiker than $\epsilon^{p\bar{q}^2}$ as $\epsilon\downarrow0$.

Inspired by Buckdahn et. al \cite{BLLW24}, for $M\geqslant1$, we introduce the deterministic set $$\Gamma_M\coloneqq\Big\{t\in[0,T]:\mathbb{E}|\bar{\kappa}^1_t|^2\vee\mathbb{E}|\bar{\kappa}^2_t|^2\vee\mathbb{E}\int_{E_1}|\bar{\tilde{\kappa}}^1_{(t,e)}|^2\nu_1(de)
\vee\mathbb{E}\int_{E_2}|\bar{\tilde{\kappa}}^2_{(t,e)}|^2\nu_2(de)\leqslant M\Big\}.$$
Moreover, as
\begin{equation*}
	\int_0^T\mathbb{E}|\bar{\kappa}^i_t|^2dt+\int_0^T\mathbb{E}\int_{E_i}|\bar{\tilde{\kappa}}^i_{(t,e)}|^2\nu_i(de)dt
=\mathbb{E}\Big[\int_0^T|\bar{\kappa}^i_t|^2dt+\int_0^T\int_{E_i}|\bar{\tilde{\kappa}}^i_{(t,e)}|^2\nu_i(de)dt\Big]<\infty,
\end{equation*}
we have $\mathbbm{1}_{\cup_{M\geqslant1}\Gamma_M}(t)=1$, $dt\mbox{-}a.e..$

For arbitrarily fixed $M\geqslant1$, let $E_\epsilon\coloneqq[\bar{t},\bar{t}+\epsilon]\cap\Gamma_M\subset\Gamma_M$. we have $|E_\epsilon|\leqslant\epsilon$.

For all $1<p\bar{q}^2<2$, from H\"{o}lder's inequality, we get
\begin{equation*}
	\begin{aligned}
		&\mathbb{E}\bigg[\Big(\int_0^T|\bar{\kappa}^i_t||\hat{\kappa}^{i,1,\epsilon}_t|\mathbbm{1}_{E_\epsilon}(t)dt\Big)^{p\bar{q}^2}
+\Big(\int_0^T\int_{E_i}|\bar{\tilde{\kappa}}^i_{(t,e)}||\hat{\kappa}^{i,1,\epsilon}_t|\mathbbm{1}_{E_\epsilon}(t)\nu_i(de)dt\Big)^{p\bar{q}^2}\bigg]\\
		&\leqslant\mathbb{E}\bigg[\Big(\int_{E_\epsilon}|\bar{\kappa}^i_t|^2dt\Big)^{\frac{p\bar{q}^2}{2}}\Big(\int_0^T|\hat{\kappa}^{i,1,\epsilon}_t|^2dt\Big)^{\frac{p\bar{q}^2}{2}}\bigg]\\
		&\quad +\mathbb{E}\bigg[\Big(\int_{E_\epsilon}\int_{E_i}|\bar{\tilde{\kappa}}^i_{(t,e)}|^2\nu_i(de)dt\Big)^{\frac{p\bar{q}^2}{2}}
\Big(\int_0^T\int_{E_i}|\hat{\tilde{\kappa}}^{i,1,\epsilon}_{(t,e)}|^2\nu_i(de)dt\Big)^{\frac{p\bar{q}^2}{2}}\bigg]\\
		&\leqslant\bigg(\mathbb{E}\int_{E_\epsilon}|\bar{\kappa}^i_t|^2dt\bigg)^{\frac{p\bar{q}^2}{2}}
\bigg(\mathbb{E}\bigg(\int_0^T|\hat{\kappa}^{i,1,\epsilon}_t|^2dt\bigg)^{\frac{p\bar{q}^2}{2-p\bar{q}^2}}\bigg)^{\frac{2-p\bar{q}^2}{2}}\\
		\end{aligned}
	\end{equation*}
	\begin{equation*}
		    \begin{aligned}
        &\quad +\bigg(\mathbb{E}\int_{E_\epsilon}\int_{E_i}|\bar{\tilde{\kappa}}^i_{(t,e)}|^2\nu_i(de)dt\bigg)^{\frac{p\bar{q}^2}{2}}
\bigg(\mathbb{E}\bigg(\int_0^T\int_{E_i}|\hat{\tilde{\kappa}}^{i,1,\epsilon}_{(t,e)}|^2\nu_i(de)dt\bigg)^{\frac{p\bar{q}^2}{2-p\bar{q}^2}}\bigg)^{\frac{2-p\bar{q}^2}{2}}\\
        &=\bigg(\int_{E_\epsilon}\mathbb{E}|\bar{\kappa}^i_t|^2dt\bigg)^{\frac{p\bar{q}^2}{2}}
\bigg(\mathbb{E}\bigg(\int_0^T|\hat{\kappa}^{i,1,\epsilon}_t|^2dt\bigg)^{\frac{p\bar{q}^2}{2-p\bar{q}^2}}\bigg)^{\frac{2-p\bar{q}^2}{2}}\\
		&\quad
+\bigg(\int_{E_\epsilon}\mathbb{E}\int_{E_i}|\bar{\tilde{\kappa}}^i_{(t,e)}|^2\nu_i(de)dt\bigg)^{\frac{p\bar{q}^2}{2}}
\bigg(\mathbb{E}\bigg(\int_0^T\int_{E_i}|\hat{\tilde{\kappa}}^{i,1,\epsilon}_{(t,e)}|^2\nu_i(de)dt\bigg)^{\frac{p\bar{q}^2}{2-p\bar{q}^2}}\bigg)^{\frac{2-p\bar{q}^2}{2}}.
	\end{aligned}
\end{equation*}
Noticing
$$
\bigg(\int_{E_\epsilon}\mathbb{E}|\bar{\kappa}^i_t|^2dt\bigg)^{\frac{p\bar{q}^2}{2}}
\vee\bigg(\int_{E_\epsilon}\mathbb{E}\int_{E_i}|\bar{\tilde{\kappa}}^i_{(t,e)}|^2\nu_i(de)dt\bigg)^{\frac{p\bar{q}^2}{2}}
\leqslant M^{\frac{p\bar{q}^2}{2}}\epsilon^{\frac{p\bar{q}^2}{2}},
$$
we can deduce that for all $1<p\bar{q}^2<2$,
\begin{equation*}
	\begin{aligned}
		&\mathbb{E}\bigg[\Big(\int_0^T|\bar{\kappa}^i_t||\hat{\kappa}^{i,1,\epsilon}_t|\mathbbm{1}_{E_\epsilon}(t)dt\Big)^{p\bar{q}^2}
+\Big(\int_0^T\int_{E_i}|\bar{\tilde{\kappa}}^i_{(t,e)}||\hat{\kappa}^{i,1,\epsilon}_t|\mathbbm{1}_{E_\epsilon}(t)\nu_i(de)dt\Big)^{p\bar{q}^2}\bigg]\leqslant CM^{\frac{p\bar{q}^2}{2}}\epsilon^{p\bar{q}^2}.
	\end{aligned}
\end{equation*}
Then, replacing $\mathbbm{1}_{[\bar{t},\bar{t}+\epsilon]}$ by $\mathbbm{1}_{E_\epsilon}$, we have
\begin{equation*}
	\begin{aligned}
		&\mathbb{E}\bigg(\int_0^T(\tilde{l}_\kappa(t)-l_\kappa(t))\hat{\kappa}^{i,1,\epsilon}_tdt\bigg)^{p\bar{q}^2}\\
		&\leqslant C\biggl\{\epsilon^{p\bar{q}^2}
        +\mathbb{E}\bigg[\Big(\sup_{0\leqslant t\leqslant T}\big(|u_t|+|\bar{u}_t|\big)\int_0^T|\hat{\kappa}^{i,1,\epsilon}_t|\mathbbm{1}_{[\bar{t},\bar{t}+\epsilon]}(t)dt\Big)^{p\bar{q}^2}
+\Big(\int_0^T|\bar{\kappa}^i_t||\hat{\kappa}^{i,1,\epsilon}_t|\mathbbm{1}_{[\bar{t},\bar{t}+\epsilon]}(t)dt\Big)^{p\bar{q}^2}\\
		&\qquad+\Big(\int_0^T\int_{E_i}|\bar{\tilde{\kappa}}^i_{(t,e)}||\hat{\kappa}^{i,1,\epsilon}_t|\mathbbm{1}_{[\bar{t},\bar{t}+\epsilon]}(t)\nu_i(de)dt\Big)^{p\bar{q}^2}\bigg]\biggr\}\\
		&\leqslant C\biggl\{\epsilon^{p\bar{q}^2}+M^{\frac{p\bar{q}^2}{2}}\epsilon^{p\bar{q}^2}+\Big(\mathbb{E}\Big[\sup_{0\leqslant t\leqslant T}(|u_t|^2+|\bar{u}_t|^2)\Big]\Big)^{\frac{p\bar{q}^2}{2}}\\
		&\qquad\times\bigg(\mathbb{E}\bigg(\int_0^T\mathbbm{1}^2_{E_\epsilon}(t)dt\bigg)^{2p\bar{q}^2}\bigg)^{\frac{1}{4}}
\bigg(\mathbb{E}\bigg(\int_0^T|\hat{\kappa}^{i,1,\epsilon}_t|^2dt\bigg)^{2p\bar{q}^2}\bigg)^{\frac{1}{4}}\biggr\}\leqslant C_M\epsilon^{p\bar{q}^2}.
	\end{aligned}
\end{equation*}
Similarly,
\begin{equation*}
		\mathbb{E}\bigg(\int_0^T\Big[(\tilde{l}_y(t)-l_y(t))\hat{y}^{1,\epsilon}_t+(\tilde{l}_{z^i}(t)-l_{z^i}(t))\hat{z}^{i,1,\epsilon}_t
+(\tilde{l}_{\tilde{z}^i}(t)-l_{\tilde{z}^i}(t))\int_{E_i}\hat{\tilde{z}}^{i,1,\epsilon}_{(t,e)}\nu_i(de)\Big]dt\bigg)^{p\bar{q}^2}\leqslant C_M\epsilon^{p\bar{q}^2}.
\end{equation*}
Other terms can be estimated in the classical way. The proof is complete.
\end{proof}

\section*{Appendix D: Proof of Lemma \ref{lemma estimate of zeta2,hat zeta 3}}

\begin{proof}
	The first estimate can be obtained by using the same technique in Lemma \ref{lemma estimate of zeta1 hat zeta2}. However, we should note that we fail to make the order reach $O(\epsilon^2)$ due to the appearance of the $\mathbb{E}\big(\int_0^T\beta^i_t\delta\sigma_i(t)\mathbbm{1}_{E_\epsilon}dt\big)^2$ term, and so does the second estimate. Next, we consider the third estimate. Use the notations
	\begin{equation*}	
\hat{\zeta}^{3,\epsilon}\coloneqq\zeta^{\epsilon}-\bar{\zeta}-\zeta^1-\zeta^2,\ \hat{\kappa}^{i,3,\epsilon}\coloneqq\kappa^{i,\epsilon}-\bar{\kappa}^i-\kappa^{i,1}-\kappa^{i,2},\ \hat{\tilde{\kappa}}^{i,3,\epsilon}\coloneqq\tilde{\kappa}^{i,\epsilon}-\bar{\tilde{\kappa}}^i-\tilde{\kappa}^{i,1}-\tilde{\kappa}^{i,2},\quad i=1,2,
	\end{equation*}
we have the following equation:
\begin{equation*}
	\left\{
	\begin{aligned}
		-d\hat{\zeta}^{3,\epsilon}_t&=\Bigl\{l_x(t)\hat{x}^{3,\epsilon}_t+l_y(t)\hat{y}^{3,\epsilon}_t+l_{z^i}(t)\hat{z}^{i,3,\epsilon}_t+l_{\tilde{z}^i}(t)\int_{E_i}\hat{\tilde{z}}^{i,3,\epsilon}_{(t,e)}\nu_i(de)
+l_{\kappa^i}(t)\hat{\kappa}^{i,3,\epsilon}_t\\
		&\quad\ +l_{\tilde{\kappa}^i}(t)\int_{E_i}\hat{\tilde{\kappa}}^{i,3,\epsilon}_{(t,e)}\nu_i(de)+A^{\epsilon}_2(t)\Bigr\}dt
-\hat{\kappa}^{i,3,\epsilon}_tdW^i_t-\int_{E_i}\hat{\tilde{\kappa}}^{i,3,\epsilon}_{(t,e)}\tilde{N}_i(de,dt),\quad t\in[0,T],\\
		\hat{\zeta}^{3,\epsilon}_T&=\varphi_x(\bar{x}_T,\bar{y}_0)\hat{x}^{3,\epsilon}_T+\varphi_y(\bar{x}_T,\bar{y}_0)\hat{y}^{3,\epsilon}_0+\Phi^{\epsilon}_2(T),
	\end{aligned}
	\right.
\end{equation*}
where
\begin{equation*}
	\begin{aligned}
		&A^{\epsilon}_2(t)\coloneqq\Bigl\{\delta l_x(t,\Delta^1,\Delta^2,\pi^1,\pi^2)\hat{x}^{1,\epsilon}_t+\delta l_y(t,\Delta^1,\Delta^2,\pi^1,\pi^2)\hat{y}^{1,\epsilon}_t\\
        &\quad +\delta l_{z^i}(t,\Delta^1,\Delta^2,\pi^1,\pi^2)(\hat{z}^{i,1,\epsilon}_t-\Delta^i_t\mathbbm{1}_{E_\epsilon}(t))
        +\delta l_{\tilde{z}^i}(t,\Delta^1,\Delta^2,\pi^1,\pi^2)\int_{E_i}\hat{\tilde{z}}^{i,1,\epsilon}_{(t,e)}\nu_i(de)\\
		&\quad+\delta l_{\kappa^i}(t,\Delta^1,\Delta^2,\pi^1,\pi^2)(\hat{\kappa}^{i,1,\epsilon}_t-\pi^i_t\mathbbm{1}_{E_\epsilon}(t))
        +\delta l_{\tilde{\kappa}^i}(t,\Delta^1,\Delta^2,\pi^1,\pi^2)\int_{E_i}\hat{\tilde{\kappa}}^{i,1,\epsilon}_{(t,e)}\nu_i(de)\Bigr\}\mathbbm{1}_{E_\epsilon}(t)\\
&\quad+\frac{1}{2}\check{\Xi}(t)\widetilde{D^2l}(t)\check{\Xi}(t)^\top-\frac{1}{2}\tilde{\Xi}(t)D^2l(t)\tilde{\Xi}(t)^\top,\\
		&\widetilde{D^2l}(t)\coloneqq2\int_0^1\int_0^1\theta D^2l\big(t,\Xi(t,\Delta^1_t\mathbbm{1}_{E_\epsilon}(t),\Delta^2_t\mathbbm{1}_{E_\epsilon}(t),\pi^1_t\mathbbm{1}_{E_\epsilon}(t),\pi^2_t\mathbbm{1}_{E_\epsilon}(t))\\
		&\qquad\qquad\qquad+\lambda\theta(\Xi^\epsilon(t)-\Xi(t,\Delta^1_t\mathbbm{1}_{E_\epsilon}(t),\Delta^2_t\mathbbm{1}_{E_\epsilon}(t),
\pi^1_t\mathbbm{1}_{E_\epsilon}(t),\pi_t^2\mathbbm{1}_{E_\epsilon}(t))),u^{\epsilon}\big)d\lambda d\theta,\\
		&\widetilde{D^2\varphi}(T,0)\coloneqq2\int_{0}^{1}\int_{0}^{1}\theta D^2\varphi(\bar{x}_T+\rho\theta\hat{x}^{1,\epsilon}_T,\bar{y}_0+\rho\theta\hat{y}^{1,\epsilon}_0)d\theta d\rho,
	\end{aligned}
\end{equation*}
with
\begin{equation*}
	\begin{aligned}
		&\Xi(\cdot,\Delta^1,\Delta^2,\pi^1,\pi^2)\equiv\big[\bar{x},\bar{y},\bar{z}^1+\Delta^1,\bar{z}^2+\Delta^2,\bar{\tilde{z}}^1,\bar{\tilde{z}}^2,\bar{\kappa}^1+\pi^1,\bar{\kappa}^2
+\pi^2,\bar{\tilde{\kappa}}^1,\bar{\tilde{\kappa}}^2\big],\\
        \check{\Xi}&\equiv\Big[\check{\Xi}_2,\hat{\kappa}^{1,1,\epsilon}_t-\pi^1_t\mathbbm{1}_{[\bar{t},\bar{t}+\epsilon]},\hat{\kappa}^{2,1,\epsilon}_t-\pi^2_t\mathbbm{1}_{[\bar{t},\bar{t}+\epsilon]},
\int_{E_1}\hat{\tilde{\kappa}}^{1,1,\epsilon}_{(t,e)}\nu_1(de),\int_{E_2}\hat{\tilde{\kappa}}^{2,1,\epsilon}_{(t,e)}\nu_2(de)\Big],\\
		\tilde{\Xi}&\equiv\Big[\tilde{\Xi}_2,\kappa^{1,1}_t-\pi^1_t\mathbbm{1}_{[\bar{t},\bar{t}+\epsilon]},\kappa^{2,1}_t-\pi^2_t\mathbbm{1}_{[\bar{t},\bar{t}+\epsilon]},
\int_{E_1}\tilde{\kappa}^{1,1}_{(t,e)}\nu_1(de),\int_{E_2}\tilde{\kappa}^{2,1}_{(t,e)}\nu_2(de)\Big],\\
		\Phi^{\epsilon}_2(T)&\equiv\frac{1}{2}\text{Tr}\biggl\{\widetilde{D^2\varphi}(T,0)\begin{bmatrix}
			\hat{x}^{1,\epsilon}_T\\
			\hat{y}^{1,\epsilon}_0
		\end{bmatrix}\begin{bmatrix}
			\hat{x}^{1,\epsilon}_T& \hat{y}^{1,\epsilon}_0
		\end{bmatrix}\biggr\}-\frac{1}{2}\text{Tr}\biggl\{D^2\varphi(\bar{x}_T,\bar{y}_0)\begin{bmatrix}
			x^1_T\\y^1_0
		\end{bmatrix}\begin{bmatrix}
			x^1_T&y^1_0
		\end{bmatrix}\biggr\}.
	\end{aligned}
\end{equation*}
It follows from Lemma \ref{lemma stochastic Lip BSDEP} that, for all $p>1$,
\begin{equation*}\label{equation estiamte hat zeta3 1}
	\begin{aligned}
		&\mathbb{E}\bigg[\sup_{0\leqslant t\leqslant T}|\hat{\zeta}^{3,\epsilon}_t|^p+\Big(\int_0^T|\hat{\kappa}^{i,3,\epsilon}_t|^2dt\Big)^{\frac{p}{2}}
+\Big(\int_0^T\int_{E_i}|\hat{\tilde{\kappa}}^{i,3,\epsilon}_{(t,e)}|^2\nu_i(de)dt\Big)^{\frac{p}{2}}\bigg]\\
		\leqslant&\ C\bigg(\mathbb{E}\bigg[\sup_{0\leqslant t\leqslant T}\big|\varphi_x(\bar{x}_T,\bar{y}_0)\hat{x}^{3,\epsilon}_T+\varphi_y(\bar{x}_T,\bar{y}_0)\hat{y}^{3,\epsilon}_0+\Phi^{\epsilon}_2(T)\big|^{p\bar{q}^2}\\
		&\qquad+\Big(\int_0^T\Big\vert l_x(t)\hat{x}^{3,\epsilon}_t+l_y(t)\hat{y}^{3,\epsilon}_t+l_{z^i}(t)\hat{z}^{i,3,\epsilon}_t+l_{\tilde{z}^i}(t)\int_{E_i}\hat{\tilde{z}}^{i,3,\epsilon}_{(t,e)}\nu_i(de)
+A^{\epsilon}_2(t)\Big\vert dt\Big)^{p\bar{q}^2}\bigg]\bigg)^{\frac{1}{\bar{q}^2}}.
	\end{aligned}
\end{equation*}

Noting that
\begin{equation*}
	\begin{aligned}
		&\mathbb{E}\Big\vert\tilde{\varphi}_{xx}(T,0)|\hat{x}^{1,\epsilon}_T|^2-\varphi_{xx}(\bar{x}_T,\bar{y}_0)|x^1_T|^2\Big\vert^{p\bar{q}^2}\\
		\leqslant&\ C\mathbb{E}\Big\vert\tilde{\varphi}_{xx}(T,0)(|\hat{x}^{1,\epsilon}_T|^2-|x^1_T|^2)+[\tilde{\varphi}_{xx}(T,0)-\varphi_{xx}(\bar{x}_T,\bar{y}_0)]|x^1_T|^2\Big\vert^{p\bar{q}^2}\\
		\leqslant&\ C\mathbb{E}\Big\vert\tilde{\varphi}_{xx}(T,0)|\hat{x}^{2,\epsilon}_T|(|\hat{x}^{1,\epsilon}_T|+|x^1_T|)+(|\hat{x}^{1,\epsilon}_T|+|\hat{y}^{1,\epsilon}_0|)|x^1_T|^2\Big\vert^{p\bar{q}^2}
=o(\epsilon^{p\bar{q}^2}),
	\end{aligned}
\end{equation*}
and applying classical technical, we can deduce that
\begin{equation*}
	\begin{aligned}
		&\mathbb{E}\bigg[\sup_{0\leqslant t\leqslant T}\big|\varphi_x(\bar{x}_T,\bar{y}_0)\hat{x}^{3,\epsilon}_T+\varphi_y(\bar{x}_T,\bar{y}_0)\hat{y}^{3,\epsilon}_0+\Phi^{\epsilon}_2(T)\big|^{p\bar{q}^2}\\
		&\quad+\Big(\int_0^T\Big\vert l_x(t)\hat{x}^{3,\epsilon}_t+l_y(t)\hat{y}^{3,\epsilon}_t+l_{z^i}(t)\hat{z}^{i,3,\epsilon}_t+l_{\tilde{z}^i}(t)\int_{E_i}\hat{\tilde{z}}^{i,3,\epsilon}_{(t,e)}\nu_i(de)\Big\vert dt\Big)^{p\bar{q}^2}\bigg]=o(\epsilon^{p\bar{q}^2}).
	\end{aligned}
\end{equation*}

Next, we want to show that $\mathbb{E}\big(\int_0^TA^{\epsilon}_2(t)dt\big)^{p\bar{q}^2}=o(\epsilon^{p\bar{q}^2})$. Different from \cite{HJX22} and \cite{ZS23}, caused by our quadratic-exponential growth feature, some new tools are introduced to get the desired result. We only estimate the most important and difficult terms as follows.

Recall the relationship that $\hat{\kappa}^{i,1,\epsilon}_t-\pi^i_t\mathbbm{1}_{E_\epsilon}(t)=\hat{\kappa}^{i,2,\epsilon}_t+K'_i(t)x^1_t$ and $\hat{\tilde{\kappa}}^{i,1,\epsilon}_{(t,e)}=\hat{\tilde{\kappa}}^{i,2,\epsilon}_{(t,e)}+\tilde{K}'_i(t,e)x^1_{t-}$, where $K'_i(t)=\alpha_t(\sigma_{ix}(t)+\sigma_{iy}(t)m_t)+\beta^i_t$ and $\tilde{K}'_i(t,e)=\alpha_{t-}\big[f_{ix}(t,e)+f_{iy}(t,e)m_{t-}\big]+\tilde{\beta}^i_{(t,e)}+\tilde{\beta}^i_{(t,e)}\big[f_{ix}(t,e)+f_{iy}(t,e)m_{t-}\big]$. It follows from the definition and Assumption \ref{assumption of l(t)} that
\begin{equation*}
	\left\{
	\begin{aligned}
		&|\delta l_{\kappa^i}(t,\Delta^1,\Delta^2,\pi^1,\pi^2)|\leqslant C(1+|\bar{\kappa}^i_t|+|\bar{x}_t|+|\bar{y}_t|+|u_t|+|\bar{u}_t|),\\
		&|\delta l_{\tilde{\kappa}^i}(t,\Delta^1,\Delta^2,\pi^1,\pi^2)|\leqslant C\Big(1+\int_{E_i}|e^{\theta\bar{\tilde{\kappa}}^i_{(t,e)}}|\nu_i(de)\Big),\\
		&\Big\vert\tilde{l}_{\kappa^i\kappa^i}(t)|\hat{\kappa}^{i,1,\epsilon}_t-\pi^i_t\mathbbm{1}_{[\bar{t},\bar{t}+\epsilon]}(t)|^2
-l_{\kappa^i\kappa^i}(t)|\kappa^{i,1}_t-\pi^i_t\mathbbm{1}_{[\bar{t},\bar{t}+\epsilon]}(t)|^2\Big\vert\\
		&\quad\leqslant\Big\vert\tilde{l}_{\kappa^i\kappa^i}(t)\hat{\kappa}^{i,2,\epsilon}_t\big[\hat{\kappa}^{i,1,\epsilon}_t-\pi^i_t\mathbbm{1}_{[\bar{t},\bar{t}+\epsilon]}(t)+K'_i(t)x^1_t\big] \Big\vert+|\tilde{l}_{\kappa^i\kappa^i}(t)-l_{\kappa^i\kappa^i}(t)|\big\vert K'_i(t)x^1_t\big\vert^2,\\
		&\Big\vert\tilde{l}_{\tilde{\kappa}^i\tilde{\kappa}^i}(t)\big|\int_{E_i}\hat{\tilde{\kappa}}^{i,1,\epsilon}_{(t,e)}\nu_i(de)|^2
-l_{\tilde{\kappa}^i\tilde{\kappa}^i}(t)\big|\int_{E_i}\tilde{\kappa}^{i,1}_{(t,e)}\nu_i(de)|^2\Big\vert\\
		&\quad\leqslant\Big\vert\tilde{l}_{\tilde{\kappa}^i\tilde{\kappa}^i}(t)\int_{E_i}\hat{\tilde{\kappa}}^{i,2,\epsilon}_{(t,e)}\nu_i(de)\int_{E_i}\big[\hat{\tilde{\kappa}}^{i,1,\epsilon}_{(t,e)}
+\tilde{K}'_i(t,e)x^1_{t-}\big]\nu_i(de)\Big\vert\\
        &\qquad+|\tilde{l}_{\tilde{\kappa}^i\tilde{\kappa}^i}(t)-l_{\tilde{\kappa}^i\tilde{\kappa}^i}(t)|\Big\vert\int_{E_i}\tilde{K}'_i(t,e)x^1_{t-}\nu_i(de)\Big\vert^2.
	\end{aligned}
	\right.
\end{equation*}

Recalling that in Lemma \ref{lemma estimate of zeta1 hat zeta2}, different from \cite{HJX22} and \cite{ZS23}, on $E_\epsilon=[\bar{t},\bar{t}+\epsilon]\cap\Gamma_M$, we have proved that $\hat{\kappa}^{i,2,\epsilon}\sim O(\epsilon)$. A straight forward argument gives
\begin{equation*}
	\begin{aligned}
		&\mathbb{E}\bigg(\int_0^T|\bar{\kappa}^i_t||\hat{\kappa}^{i,2,\epsilon}_t|\mathbbm{1}_{E_\epsilon}(t)dt\bigg)^{p\bar{q}^2}
		\leqslant C\mathbb{E}\bigg[\Big(\int_0^T|\bar{\kappa}^i_t|^2\mathbbm{1}_{E_{\epsilon}}(t)dt\Big)^{\frac{p\bar{q}^2}{2}}\Big(\int_0^T|\hat{\kappa}^{i,2,\epsilon}_t|^2dt\Big)^{\frac{p\bar{q}^2}{2}}\bigg]\\
		\leqslant&\ C \bigg(\mathbb{E}\bigg(\int_0^T|\bar{\kappa}^i_t|^2\mathbbm{1}_{E_\epsilon}(t)dt\bigg)^{p\bar{q}^2}\bigg)^{\frac{1}{2}}
\bigg(\mathbb{E}\bigg(\int_0^T|\hat{\kappa}^{i,2,\epsilon}_t|^2dt\bigg)^{p\bar{q}^2}\bigg)^{\frac{1}{2}}=o(\epsilon^{p\bar{q}^2}).
	\end{aligned}
\end{equation*}

Noting that, for $\theta>0$, $\mathbb{E}\big[\int_0^T\int_{E_i}|e^{\theta\bar{\tilde{\kappa}}^i_{(t,e)}}|\nu_i(de)dt\big]<\infty$, we have
\begin{equation*}
	\begin{aligned}
		&\mathbb{E}\bigg(\int_0^T\Big[\int_{E_i}|e^{\theta\bar{\tilde{\kappa}}^i_{(t,e)}}|\nu_i(de)\Big]\Big[\int_{E_i}|\hat{\kappa}^{i,2,\epsilon}_t|\nu_i(de)\Big]\mathbbm{1}_{E_\epsilon}(t)dt\bigg)^{p\bar{q}^2}\\
		&\leqslant C\mathbb{E}\bigg[\Big(\int_0^T\int_{E_i}|e^{\theta\bar{\tilde{\kappa}}^i_{(t,e)}}|^2\mathbbm{1}_{E_\epsilon}(t)\nu_i(de)dt\Big)^{\frac{p\bar{q}^2}{2}}
\Big(\int_0^T\int_{E_i}|\hat{\tilde{\kappa}}^{i,2,\epsilon}_{(t,e)}|^2\nu_i(de)dt\Big)^{\frac{p\bar{q}^2}{2}}\bigg]\\
		&\leqslant C\bigg(\mathbb{E}\bigg(\int_0^T\int_{E_i}|e^{\theta\bar{\tilde{\kappa}}^i_{(t,e)}}|^2\mathbbm{1}_{E_\epsilon}(t)\nu_i(de)dt\bigg)^{p\bar{q}^2}\bigg)^{\frac{1}{2}}
\bigg(\mathbb{E}\bigg(\int_0^T\int_{E_i}|\hat{\tilde{\kappa}}^{i,2,\epsilon}_{(t,e)}|^2\nu_i(de)dt\bigg)^{p\bar{q}^2}\bigg)^{\frac{1}{2}}
		=o(\epsilon^{p\bar{q}^2}).
	\end{aligned}
\end{equation*}

Next, for $l_{\kappa\kappa}$ term, we have
\begin{equation*}
	\begin{aligned}
		&\mathbb{E}\bigg(\int_0^T|\hat{\kappa}^{i,2,\epsilon}_t\beta^i_tx^1_t|dt\bigg)^{p\bar{q}^2}\\
		&\leqslant C\bigg(\mathbb{E}\bigg[\sup_{0\leqslant t\leqslant T}|x^1_t|^{3p\bar{q}^2}\bigg]\bigg)^{\frac{1}{3}}
        \bigg(\mathbb{E}\bigg(\int_0^T|\hat{\kappa}^{i,2,\epsilon}_t|^2dt\bigg)^{\frac{3p\bar{q}^2}{2}}\bigg)^{\frac{1}{3}}
        \bigg(\mathbb{E}\bigg(\int_0^T|\beta^i_t|^2dt\bigg)^{\frac{3p\bar{q}^2}{2}}\bigg)^{\frac{1}{3}}
		\leqslant C\epsilon^{\frac{3p\bar{q}^2}{2}},
	\end{aligned}
\end{equation*}
and similarly, $\mathbb{E}\Big(\int_0^T\big(\int_{E_i}|\hat{\tilde{\kappa}}^{i,2,\epsilon}_{(t,e)}|\nu_i(de)\big)\big(\int_{E_i}|\tilde{\beta}^i_{(t,e)}|\nu_i(de)\big)x^1_{t-}dt\Big)^{p\bar{q}^2}\leqslant C\epsilon^{\frac{3p\bar{q}^2}{2}}$.

Other terms are similar. Thus, we get $\mathbb{E}\big(\int_0^T|A^{\epsilon}_2(t)|dt\big)^{p\bar{q}^2}=o(\epsilon^{p\bar{q}^2})$. From \eqref{equation estiamte hat zeta3 1}, we finally deduce that, for $p>1$,
\begin{equation*}
		\mathbb{E}\bigg[\sup_{0\leqslant t\leqslant T}|\hat{\zeta}^{3,\epsilon}_t|^p+\Big(\int_0^T|\hat{\kappa}^{i,3,\epsilon}_t|^2dt\Big)^{\frac{p}{2}}+\Big(\int_0^T\int_{E_i}|\hat{\tilde{\kappa}}^{i,3,\epsilon}_{(t,e)}|^2\nu_i(de)dt\Big)^{\frac{p}{2}}\bigg]=o(\epsilon^p).
\end{equation*}
The proof is complete.
\end{proof}

\end{document}